\documentclass{amsart}

\usepackage{comment}
\usepackage{graphicx,xcolor}

\numberwithin{equation}{section}
\usepackage[initials,nobysame,shortalphabetic]{amsrefs}
\usepackage{amssymb,amsmath,mathtools}
\usepackage{hyperref}
\usepackage{enumitem}
\usepackage[utf8]{inputenc}
\usepackage{mathrsfs}
\mathtoolsset{showonlyrefs=true}

\usepackage{caption}
\setlist[enumerate,1]{label=\upshape{(\roman*)},ref=\roman*}
\setlist[enumerate,2]{label=\upshape{(\alph*)},ref=\alph*}

\allowdisplaybreaks[2]

\renewcommand{\S}{\mathbb{S}^1}
\newcommand{\R}{\mathbb{R}}
\newcommand{\N}{\mathbb{N}}

\newcommand{\IP}[2]{\left<#1,#2\right>}

\DeclareMathOperator{\cn}{cn}
\DeclareMathOperator{\dn}{dn}
\DeclareMathOperator{\sn}{sn}
\DeclareMathOperator{\am}{am}

\newtheorem{thm}{Theorem}[section]
\newtheorem{prop}[thm]{Proposition}
\newtheorem{lem}[thm]{Lemma}
\newtheorem{cor}[thm]{Corollary}
\theoremstyle{definition}
\newtheorem{prob}[thm]{Problem}
\theoremstyle{remark}
\newtheorem{rem}[thm]{Remark}

\usepackage{xcolor}
\usepackage{marginnote}

\newcommand{\Gchanged}[1]{%
  {\color{blue} #1}%
}%

\begin{document}

\title[Scale-critical curve diffusion flows]{Scale-critical curve diffusion flows}

\author[T.~Miura]{Tatsuya Miura}
\address[T.~Miura]{Department of Mathematics, Graduate School of Science, Kyoto University, Kitashirakawa Oiwake-cho, Sakyo-ku, Kyoto 606-8502, Japan}
\email{tatsuya.miura@math.kyoto-u.ac.jp}

\author[G.~Wheeler]{Glen Wheeler}
\address[G.~Wheeler]{School of Mathematics and Physics \\
University of Wollongong\\
Northfields Avenue\\
Wollongong, NSW, 2522, Australia}
\email{glenw@uow.edu.au}

\date{\today}
\keywords{Curve diffusion flow, geometric flow, asymptotic analysis, stability}
\subjclass[2020]{53E40, 35B35, 35B40, 35K55}

\begin{abstract}
We introduce and study a one-parameter family of curve diffusion flows with a scale-critical cubic curvature term for closed immersed planar curves. We first classify all closed stationary solutions, showing that they are precisely circles or a unique family of ``super-lemniscates''. We then analyse the dynamical stability of homothetic circles. Under a sharp spectral condition, we establish, by purely variational methods, that any small perturbation of an $\omega$-fold circle monotonically approaches the unit $\omega$-circle after rescaling, translation, and reparametrisation. As a corollary, we determine the sharp ranges of the parameter for the stability of an embedded circle, and of all $\omega$-circles. We also uncover a striking arithmetic structure in the stability landscape, where the stability of $\omega$-circles depends non-monotonically on $\omega$.
\end{abstract}

\maketitle

\section{Introduction}

Higher-order geometric flows have been studied extensively, but their analysis is often delicate because, unlike for second-order flows, there is in general no maximum principle available.
Energy methods therefore play a central role, typically exploiting either a gradient-flow structure or a Lyapunov functional that controls the evolution.
Among the earliest examples is the curve diffusion flow introduced by Mullins in 1957 \cite{10.1063/1.1722742}, which can be regarded as an $H^{-1}$-gradient flow for length.

The broad aim of the present paper is to gain insight into how variational methods work for higher-order geometric flows that do not necessarily admit either a gradient-flow structure or a canonical Lyapunov functional.
In particular, we introduce and study a family of fourth-order geometric flows that includes the curve diffusion flow, which we call the \emph{critical curve diffusion flow} \eqref{CCDF}.

Given a parameter $c\in\R$ and a smooth immersed closed planar curve $\gamma_0:\S\rightarrow\R^2$, we consider the evolution equation
\begin{equation}
\label{CCDF}
\tag{CCDF}
\partial_t\gamma = -\left( k_{ss} + ck^3 \right)N\,,\qquad \gamma(\cdot,0) =: \gamma_0,
\end{equation}
where $\gamma:\S\times[0,T)\rightarrow\R^2$ is a one-parameter family of smooth
immersed curves.
Here $s$ denotes the arclength parameter, $N$ is the inward-pointing unit normal, and the curvature scalar is defined by $k :=
\IP{\gamma_{ss}}{N}$.

The cubic term $k^3$ naturally arises as the scale-critical exponent.
Indeed, if we consider a solution to $\partial_t\gamma = -\left( k_{ss} + ck^\alpha \right)N$ with a general power $\alpha$, then the parabolically rescaled flow $\gamma^\rho(x,t):=\rho\gamma(x,\rho^{-4}t)$, where $\rho>0$, solves the equation $\partial_t\gamma^\rho = -\left( k^\rho_{ss} + c\rho^{3-\alpha}(k^\rho)^\alpha \right)N$.
Hence, the flow is scale-invariant (and thus geometric) if and only if $\alpha=3$; thus equation \eqref{CCDF} is singled out.

Equation \eqref{CCDF} forms a one-parameter family that includes important flows as special cases; not only the curve diffusion flow ($c=0$) but also the Chen flow ($c=-1$) and the free elastic flow ($c=\frac12$).
The case $c=\frac12$ has a special gradient-flow structure for which the bending energy $\frac12\int_\gamma k^2\,ds$ decreases along the flow \cite{DKS02,MW25}; therefore, it can also be regarded as the Willmore flow for translation-invariant (cylindrical) surfaces.
The case $c=-1$ is not a gradient flow but still decreases the length \cite{CWW23}, and in addition, this principle easily extends to all $c<0$ (see Section \ref{subsec:immortal_blowup}).
However, for $c>0$ with $c\neq\frac12$, we are not aware of any global Lyapunov functional.
A key point of the present work is that these seemingly different flows
can be analysed within a unified variational framework.

Standard theory for parabolic geometric flows yields that, for any smooth initial datum $\gamma_0$, the flow \eqref{CCDF} exists uniquely up to a maximal existence time $T\in(0,\infty]$.
Our interest lies in the precise global behaviour of the solutions.

\subsection{Classification of stationary solutions}

Our first main result is a full classification of stationary solutions to \eqref{CCDF}.
For the three exemplar flows, stationary solutions are already classified: when $c=0$, it is clear that the only closed stationary solutions are circles, possibly multiply-covered, whereas if $c=-1$ or $c=\frac12$, then there do not exist any closed stationary solutions \cite{CWW23,MW25}.
From this, one may expect that there are no closed stationary solutions for $c\ne0$.
However, this turns out not to be the case.

For $c=\frac29$, the lemniscate of Bernoulli is a nontrivial stationary solution.
In fact, for a countable family of parameters $c\in(0,\frac29)$, there exist closed stationary solutions with turning number zero, which we call \emph{super-lemniscates} (see Section \ref{sec:stationary} and Figure \ref{fig:stationary-family}).
We further prove that there are no other stationary solutions.

\begin{thm}\label{thm:intro_stationary}
There exists a stationary solution to \eqref{CCDF} if and only if either
\[
c=0, \qquad\text{or}\qquad c=c_j:=\frac{2}{(4j-1)^2} \quad \text{for some integer $j\geq1$.}
\]
If $c=0$, the only stationary solution is a round circle, possibly multiply covered.
If $c=c_j$, the stationary solution is a unique super-lemniscate, up to similar transformations, reparametrisations, and multiple coverings.
\end{thm}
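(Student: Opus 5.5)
Stationary solutions satisfy the ODE $k_{ss}+ck^3=0$ along a closed immersed curve, and the plan is to integrate it explicitly. Multiplying by $k_s$ gives the first integral $k_s^2+\frac c2 k^4=E$ for a constant $E\ge0$. If $c=0$ then $k_{ss}=0$, so $k$ is affine in $s$; periodicity forces $k$ constant, and closedness forces $k\neq0$, giving (multiply covered) round circles. Now take $c\ne0$. If $k$ is constant then $ck^3=0$ gives $k\equiv0$, a straight line, which is not closed. So $k$ is nonconstant and $E>0$.

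For $c<0$ I use a Lyapunov-type identity instead of solving. Along a closed curve, $\int k^3\,ds\cdot c=-\int k_{ss}\,ds=0$, and also $\int k_{ss}k\,ds=-\int k_s^2$, so $c\int k^4=\int k_s^2\ge0$. Hence $c<0$ forces $k_s\equiv0$ and $k\equiv0$, a contradiction; this matches the length-decrease principle mentioned for $c<0$. It remains to treat $c>0$. After scaling, $k$ solves $k_{ss}=-ck^3$ with $k_s^2=E-\frac c2k^4$. Such a solution oscillates symmetrically between $\pm k_{\max}$, $k_{\max}=(2E/c)^{1/4}$, and is expressible via $\cn$ with modulus $1/\sqrt2$ (the lemniscatic case): $k(s)=k_{\max}\cn(\lambda s,1/\sqrt2)$ with $\lambda^2=ck_{\max}^2$. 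By scaling, $\lambda=1$, and so all solutions up to similarity and translation in $s$ form a one-parameter family in $c$ only.

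Closing conditions come next. The tangent angle is $\theta(s)=\int_0^s k$, and the period of $k$ is $4K$ with $K=K(1/\sqrt2)$. Since $\cn$ has zero mean over a period, $\theta$ is periodic, so every closed solution has turning number zero, and closure in $\R^2$ requires $\int_0^{4K}e^{i\theta}\,ds=0$ (a curve over one period of $k$ closes up only if this vanishes; otherwise repeated periods translate). Using $\frac{d}{ds}\cn=-\sn\dn$ and the symmetries $\cn(s+2K)=-\cn(s)$ and $\cn(2K-s)=-\cn(s)$, the curve is symmetric under reflections. This reduces the closing condition to a single real equation, e.g.\ $\int_0^{K}\cos\theta\,ds=0$ with $\theta(s)=k_{\max}\int_0^s\cn$. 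The key computation is that $\int\cn(u,1/\sqrt2)\,du=\sqrt2\arcsin(\sqrt{1/2}\,\sn u)$ (the general formula $\int\cn=\frac1m\arcsin(m\sn)$ with $m=1/\sqrt2$). Hence $\theta=\sqrt2 k_{\max}\arcsin(\sn u/\sqrt2)$. Set $\phi=\arcsin(\sn u/\sqrt2)\in[0,\pi/4]$ and $a=\sqrt2k_{\max}=\sqrt{2/c}$. Changing variables, $du$ becomes proportional to $d\phi/\sqrt{\cos 2\phi}$ (a lemniscate-type substitution), so the closing condition turns into an explicit integral $\int_0^{\pi/4}\cos(a\phi)\,\frac{d\phi}{\sqrt{\cos2\phi}}=0$.

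The main obstacle is evaluating this integral and showing its zeros are exactly $a=4j-1$, i.e.\ $c=2/a^2=2/(4j-1)^2$. My first attempt is the substitution $\psi=2\phi$, giving $\frac12\int_0^{\pi/2}\cos(\tfrac a2\psi)(\cos\psi)^{-1/2}d\psi$. Then I apply the classical formula $\int_0^{\pi/2}\cos^{\nu-1}x\cos(bx)\,dx=\frac{\pi\,\Gamma(\nu)}{2^\nu\Gamma(\frac{\nu+b+1}{2})\Gamma(\frac{\nu-b+1}{2})}$ with $\nu=\frac12$ and $b=a/2$. Since the $\Gamma$'s never vanish, zeros occur exactly at poles of $\Gamma(\frac{3/2-a/2}{2})$, i.e.\ $\frac34-\frac a4=-(j-1)$, so $a=4j-1$ with $j\ge1$. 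Uniqueness follows because for each $c_j$ the profile is determined up to similarity, translation and coverings, and $j=1$ gives $c=2/9$, the lemniscate, as a check.
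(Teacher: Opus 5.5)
Your proof is correct. Its skeleton matches the paper's: the explicit $\cn$-solution with modulus $1/\sqrt2$, the $\arcsin$ antiderivative for $\theta$, the reduction of closure to $\int_0^K\cos\theta\,ds=0$ via the symmetries of $\cn$, and the substitutions leading to $\frac12\int_0^{\pi/2}\cos(\frac a2\psi)(\cos\psi)^{-1/2}\,d\psi$.

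The genuine difference is how you locate the zeros of $f(t)=\int_0^{\pi/2}\cos(tx)(\cos x)^{-1/2}\,dx$. You invoke Cauchy's classical formula, which gives $f(t)=\pi^{3/2}\big/\bigl(\sqrt2\,\Gamma(\frac34+\frac t2)\Gamma(\frac34-\frac t2)\bigr)$ for all real $t$, and read the zeros off the poles of $\Gamma$. The paper instead proves Lemma~\ref{lem:root} from scratch. It uses positivity of the integrand for $|t|\le1$, and an integration by parts giving the recursion $f(t)=-\frac{2t-3}{2t-1}f(t-2)$. Since $2t-3\neq0$ for $t>2$, this shifts the zero set $\{\frac32\}$ of $(0,2]$ by multiples of $2$. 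The two arguments express the same fact: the paper's recursion is exactly $\Gamma(z+1)=z\Gamma(z)$ applied to your closed form. Your route buys an explicit value of the closing integral, hence its sign pattern and size in $c$, at the cost of importing a nontrivial special-function identity. The paper's route is elementary and self-contained.

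For $c<0$ you also argue differently. Testing the equation against $k$ gives $c\int k^4\,ds=\int k_s^2\,ds$, which forces $k\equiv0$ immediately; this is the stationary version of the length-decrease identity. The paper instead evaluates the first integral $k_s^2=-\frac c2k^4-A$ at a maximum of $k^4$. Both are fine.

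Two small points to tidy up:
\begin{enumerate}
\item The constant $E$ in $k_s^2+\frac c2k^4=E$ is nonnegative only when $c\ge0$. This is harmless, since you do not use $E$ for $c<0$.
\item The claim that every nonconstant solution is a shift of $k_{\max}\cn(\lambda s)$ needs a one-line justification. On the closed curve $k$ attains its maximum, where $k_s=0$ forces $k=k_{\max}$ (the value $-k_{\max}$ would make $k$ constant), and then ODE uniqueness applies. This is the role played by the paper's Proposition~\ref{prop:ODE}.
\end{enumerate}
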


\begin{figure}[htbp]
    \centering
    \includegraphics[width=\textwidth]{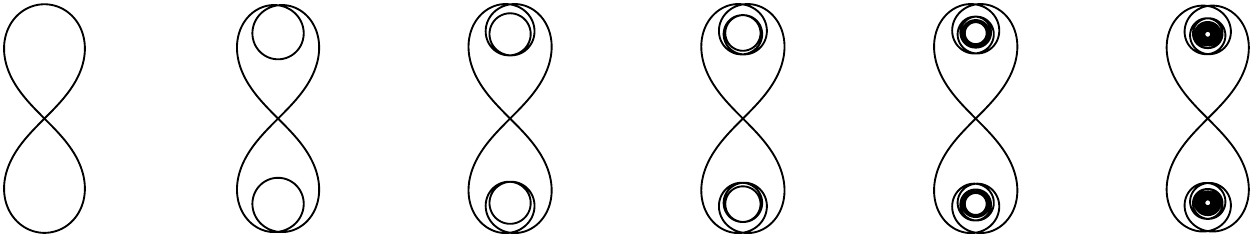}
    \caption{Super-lemniscates for
    $c_j=\frac{2}{(4j-1)^2}$, shown from left to right for $j=1,2,3,4,10,100$.
    The leftmost curve is the lemniscate of Bernoulli. }
    \label{fig:stationary-family}
\end{figure}

\begin{rem}
The sequence $\{c_j\}_{j=1}^\infty$ begins at the lemniscate of Bernoulli ($j=1$, $c_j=\frac29$) and, formally, ends at the circle ($j=\infty$, $c=0$).
The sequence of curves for $c_j\to0$ has two concentration points for the curvature and the arclength becomes unbounded, as may be observed from Figure \ref{fig:stationary-family}.
However, after rescaling each curve so that $\max k=1$, the curvature $k$ converges locally smoothly to $1$ as $j\to\infty$, thus yielding local smooth convergence to the universal cover of the unit circle.
\end{rem}

\subsection{Dynamical stability of circles}

Our second main result is concerned with the dynamical stability of $\omega$-fold round circles (we call $\omega$-circles), which arise as common homothetic solutions; more precisely, they are stationary for $c=0$, self-expanding for $c>0$, and self-shrinking for $c<0$.

It is classically known that for the curve diffusion flow ($c=0$) an embedded circle is geometrically stable \cite{MR1454678,EMS98,W13}, while multi-fold circles are unstable \cite{EMS98,MR1962022} (see also \cite{MO21}).
On the other hand, for the free elastic flow ($c=\frac12$), the situation changes and all self-expanding $\omega$-circles are stable under an appropriate rescaling \cite{MW25,AW25}.
For the Chen flow ($c=-1$), a self-shrinking embedded circle is also stable under rescaling \cite{CWW23}, but the multi-fold circles have not been correctly addressed in the literature (see Remark \ref{rem:Chen}).

Here we unify these dynamical stability results, taking some inspiration from \cite{AW25}, and develop a new purely variational framework that provides an almost complete description of the stability and instability of circles for the general flow \eqref{CCDF}.
As a result, we reveal nontrivial thresholds of the parameter $c$ for the stability of circles, while discovering an unexpected arithmetic stability pattern:
the stability of $\omega$-circles does not vary monotonically with $\omega$, instead exhibiting irregular alternations between stable and unstable modes.

For a closed immersed curve $\gamma$ with turning number $\omega\neq0$, which we assume $\omega>0$ throughout by reversing the parameter if necessary, we define the \emph{curvature oscillation} as the scale-invariant quantity
\[
K_{\mathrm{osc}}(\gamma):=
L(\gamma)\int_\gamma (k-\bar k)^2\,ds,
\qquad
\bar k:=\frac{1}{L(\gamma)}\int_\gamma k\,ds=\frac{2\pi\omega}{L(\gamma)}.
\]
We also define the polynomial
\begin{equation}\label{eq:def_p_c}
    p_c(x):=2x^4-(6c+2)x^2+8c,
\end{equation}
which arises as the Fourier symbol of a quadratic form that describes the linearised shape operator for $K_{\mathrm{osc}}$ at the unit $\omega$-circle along a \emph{length-normalised} version of the flow \eqref{CCDF}.
Removing the scaling and translation modes $x=0,\pm1$, set
\begin{equation}\label{eq:def_lambda_hat}
\hat\lambda_{c,\omega}
:=
\min_{n\in\mathbb Z\setminus\{0,\pm\omega\}}
p_c\Big(\frac{n}{\omega}\Big) \in \R.
\end{equation}
Then we obtain the following variational stability theorem for circles under a spectrally sharp condition.

\begin{thm}\label{thm:intro_stability}
    Let $c\in\R$ and $\omega\geq1$ be an integer.
    Assume
    \begin{equation}\label{eq:lambda>0}
        \hat\lambda_{c,\omega}>0.
    \end{equation}
    Then there exists $\varepsilon=\varepsilon(c,\omega)>0$ with the following property:
    
    Let $\gamma_0:\S\to\R^2$ be a closed planar curve with turning number $\omega>0$ such that
    \[
    K_{\mathrm{osc}}(\gamma_0)\le\varepsilon.
    \]
    Let $\gamma:\S\times[0,T)\to\R^2$ be the unique solution of \eqref{CCDF} with maximal existence time $T\in(0,\infty]$.
    Then $K_{\mathrm{osc}}(\gamma(\cdot,t))$ is non-increasing in $t\in[0,T)$ and converges to zero as $t\to T$.
    In addition, there exists a family of translations $p(t)\in\R^2$ such that, after reparametrisation, the flow satisfies the following convergence to the unit $\omega$-circle $\gamma_\omega$:
    \begin{enumerate}
    \item If $c>0$, then $T=\infty$ and
    \[
    \frac{\gamma(\cdot,t)-p(t)}{\sqrt[4]{4ct}}\to \gamma_\omega
    \qquad\text{smoothly as }t\to\infty.
    \]
    In particular, the solution $\gamma$ approaches an expanding circle.
    
    \item If $c=0$, then $T=\infty$ and, for some $\sigma_\infty\in(0,\infty)$,
    \[
    \gamma(\cdot,t)-p(t)\to \sigma_\infty\gamma_\omega
    \qquad\text{smoothly as }t\to\infty.
    \]
    In particular, the solution $\gamma$ approaches a stationary circle.
    
    \item If $c<0$, then $T<\infty$ and
    \[
    \frac{\gamma(\cdot,t)-p(t)}{\sqrt[4]{4|c|(T-t)}}\to \gamma_\omega
    \qquad\text{smoothly as }t\nearrow T.
    \]
    In particular, the solution $\gamma$ approaches a shrinking circle.
    \end{enumerate}
\end{thm}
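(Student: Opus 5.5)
We plan to show that $K_{\mathrm{osc}}$ is a Lyapunov functional near circles by computing its evolution along \eqref{CCDF}. Since $K_{\mathrm{osc}}$ is scale-invariant, we write it as $L\int k^2\,ds-4\pi^2\omega^2$. Along a normal flow $\partial_t\gamma=-FN$ with $F=k_{ss}+ck^3$, we use the standard formulas $\partial_t ds = kF\,ds$ (with the sign fixed by the inward normal) and $\partial_t k = -F_{ss}-k^2F$. These give
\[
\frac{d}{dt}K_{\mathrm{osc}} = \Big(\int kF\,ds\Big)\int k^2\,ds + L\int\big(-2kF_{ss}-2k^3F+k^3F\big)\,ds,
\]
up to signs that we will verify. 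Integrating by parts, we express $\frac{d}{dt}K_{\mathrm{osc}}$ as $-\mathcal Q(k)$ plus higher-order terms. We then rescale to unit length, $\tilde s=s/L$, and write $k=\bar k+\kappa$ with $\int\kappa=0$. Next we expand in Fourier modes $\kappa=\sum \hat\kappa_n e^{2\pi i n\tilde s}$ relative to the $\omega$-circle, with frequency $x=n/\omega$. The quadratic part should come out as a positive multiple of $\sum_n p_c(n/\omega)\,|n|^2|\hat\kappa_n|^2$, or something similar, and this is how $p_c$ enters.

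The key structural facts are the following. Modes $n=\pm\omega$ of $\kappa$ correspond to translations; they are not generally absent, but they are controlled. For a closed curve, $\int k e^{i\theta}\,ds=0$, where $\theta$ is the tangent angle. Writing $\theta=2\pi\omega\tilde s+\phi$, this closing condition shows that the $\pm\omega$ Fourier coefficients of $\kappa$ are quadratically small in $\|\kappa\|$. The mode $n=0$ is killed by the mean-zero condition. Hence \eqref{eq:lambda>0} yields $\mathcal Q\ge \hat\lambda_{c,\omega}\,C\|\kappa\|_{H^2}^2 - C\|\kappa\|^3_{H^2}$. We then interpolate the cubic and quartic remainders (from $k^3F$ etc.) using Gagliardo--Nirenberg, with all quantities controlled by $\|\kappa_{\tilde s\tilde s}\|_{L^2}$ and $K_{\mathrm{osc}}$. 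Provided $K_{\mathrm{osc}}\le\varepsilon$, this gives
\[
\frac{d}{dt}K_{\mathrm{osc}}\le -\delta\, L^{-4}\,\|\kappa\|_{H^2}^2\le -\delta' L^{-4}K_{\mathrm{osc}}.
\]
The main obstacle is this step: bounding the nonlinear remainders by the dissipative quadratic term. Only $L^2$ smallness of $\kappa$ is assumed, while the remainders involve $\kappa_s$ and $\kappa_{ss}$. We would handle it by choosing interpolation exponents so that each remainder is at most $\|\kappa\|_{L^2}^{a}\|\kappa\|_{\dot H^2}^{2}$ with $a>0$; scale invariance makes the exponents balance. Monotonicity then follows by continuity, since the set of times where $K_{\mathrm{osc}}\le\varepsilon$ is open and closed.

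For the length, $\frac{d}{dt}L=-\int kF\,ds = \int k_s^2\,ds - c\int k^4\,ds$, again up to sign verification. Near the circle this gives $\frac{d}{dt}L^4 = -4c(2\pi\omega)^4+O(\sqrt{K_{\mathrm{osc}}})$ to leading order; the sign is negative for expansion when $c>0$, with the convention fixed accordingly. Combining this with the exponential decay of $K_{\mathrm{osc}}$ in the time variable $\tau=\int L^{-4}dt$, we get $L^4\sim 4|c|(2\pi\omega)^4\,t$ for $c>0$ and $T<\infty$ with $L^4\sim 4|c|(2\pi\omega)^4(T-t)$ for $c<0$. For $c=0$ we get $L\to L_\infty>0$, since $\tfrac{d}{dt}L$ is integrable by the exponential decay in $\tau$. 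In each case $T=\infty$ when $c\ge0$, because $L$ stays bounded below and parabolic estimates in rescaled time continue the flow.

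Finally, we bootstrap in the rescaled flow $\tilde\gamma=(\gamma-p)/\lambda(t)$. Energy estimates for $\int|\partial^m_s k|^2$, again using small $K_{\mathrm{osc}}$ and interpolation, give uniform $C^m$ bounds and exponential decay of $\|\tilde k-1\|_{C^m}$. We choose $p(t)$ to be the center of mass. Exponential decay of $\partial_\tau\tilde\gamma$ modulo tangential terms then yields convergence, after constant-speed reparametrisation, to a unique limit circle, which is $\gamma_\omega$ (respectively $\sigma_\infty\gamma_\omega$ with $\sigma_\infty=L_\infty/2\pi\omega$).
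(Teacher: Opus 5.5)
Your proposal is correct and follows essentially the paper's route. Rescaling to fixed length with time $\int L^{-4}$ is exactly the paper's length-normalised flow. The quadratic part of the $K_{\mathrm{osc}}$ identity has Fourier multiplier $p_c(n/\omega)$, with no extra $|n|^2$ factor. The $\pm\omega$ modes carry energy at most $CK_{\mathrm{osc}}^2$ by closedness; note that it is $\int e^{i\theta}\,ds=0$ that carries closedness, since $\int k e^{i\theta}\,ds=0$ holds for any periodic tangent. The remainders are bounded by $C\sqrt{K_{\mathrm{osc}}}$ times the $\dot H^2$ dissipation, and the argument then proceeds through length asymptotics and a higher-order bootstrap.

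Two bookkeeping points need fixing. First, keep the translation-mode error in this pure $L^2$ form rather than as $\|\kappa\|_{H^2}^3$, which you could not absorb since $\|\kappa\|_{H^2}$ is not small. Second, the leading term is $\frac{d}{dt}L^4=+4c(2\pi\omega)^4$ and its error contains $4L^3\int k_s^2\,ds$, which is $O(\sqrt{K_{\mathrm{osc}}})$ only once the uniform higher-order curvature bounds are in hand (these need only bounded $K_{\mathrm{osc}}$); so those estimates must precede the length asymptotics, as they do in the paper.
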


\begin{rem}
    In the $c=0$ case, the curve diffusion flow preserves the signed area, so the limit scale $\sigma_\infty$ is uniquely and explicitly determined by the area of $\gamma_0$.
\end{rem}

Theorem \ref{thm:intro_stability} is sharp in the sense that it is complemented by the following variational instability result, except at the borderline case $\hat{\lambda}_{c,\omega}=0$.
This result seems to be new for all $c\in\R$.

\begin{thm}\label{thm:intro_instability}
    Let $c\in\R$ and $\omega\geq1$ be an integer.
    Assume
    \begin{equation}\label{eq:lambda<0}
        \hat\lambda_{c,\omega}<0.
    \end{equation}
    Then for any $\varepsilon>0$ there exists a solution $\gamma:\S\times[0,T)\to\R^2$ of \eqref{CCDF} with turning number $\omega$ such that $K_\mathrm{osc}(\gamma_0)\leq\varepsilon$ and such that $K_{\mathrm{osc}}(\gamma(\cdot,t))$ is strictly increasing on a time interval $[0,t_0)$ for some $t_0\in(0,T]$.
\end{thm}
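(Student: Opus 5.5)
The plan is to compute the first time derivative of $K_{\mathrm{osc}}$ along \eqref{CCDF} at $t=0$ and show it can be made strictly positive for suitable initial data close to the $\omega$-circle. Since the flow is smooth, $t\mapsto K_{\mathrm{osc}}(\gamma(\cdot,t))$ is then strictly increasing on some interval $[0,t_0)$.

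First, derive the evolution of $K_{\mathrm{osc}}$. Write $K_{\mathrm{osc}} = L\int k^2\,ds - 4\pi^2\omega^2$. Under a normal velocity $V=-(k_{ss}+ck^3)$, the standard formulas are $\partial_t ds = kV\,ds$ (with the sign fixed by the inward normal convention) and $\partial_t k = V_{ss}+k^2V$. These give
\[
\frac{d}{dt}K_{\mathrm{osc}} = \Big(\int kV\,ds\Big)\int k^2\,ds + L\int\big(2kV_{ss}+k^3V\big)\,ds,
\]
up to sign conventions, which I would fix carefully. Because $K_{\mathrm{osc}}$ is scale-invariant and the flow is scale-critical, this derivative is scale-covariant. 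So I may normalise $L=2\pi$, making the unit $\omega$-circle the reference.

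Next, linearise at the unit $\omega$-circle. Take initial data $\gamma_0^\delta$ with curvature $k = 1+\delta\phi$ as a function of arclength $s\in[0,2\pi\omega)$. Here $\phi$ must satisfy the closing conditions $\int\phi=0$ (for turning number $\omega$) and $\int \phi\, e^{\pm i s}=0$ (for closedness), up to higher-order corrections in $\delta$. Substituting into the evolution formula, the $O(1)$ and $O(\delta)$ terms vanish, since the circle is homothetic and $\int\phi=0$. The $O(\delta^2)$ term is a quadratic form $-Q(\phi)$. Expanding $\phi$ in Fourier modes $e^{ins/\omega}$, I expect $Q$ to be diagonal with symbol proportional to $p_c(n/\omega)$ times a positive factor such as $(n/\omega)^2$. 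This matches the paper's description of $p_c$ as the symbol of the linearised operator, with the modes $n=0,\pm\omega$ corresponding to scaling and translation. Precisely, I expect $\frac{d}{dt}K_{\mathrm{osc}} = -C\delta^2\sum_n w_n\, p_c(n/\omega)|\hat\phi_n|^2 + O(\delta^3)$ with $w_n>0$, while $K_{\mathrm{osc}}(\gamma_0^\delta)\sim\delta^2\sum|\hat\phi_n|^2$.

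To conclude, pick $n_0\notin\{0,\pm\omega\}$ with $p_c(n_0/\omega)=\hat\lambda_{c,\omega}<0$ and set $\phi=\cos(n_0 s/\omega)$. This $\phi$ automatically satisfies the linear closing conditions. To build an actual closed curve, I would define the curvature as $k=1+\delta\cos(n_0s/\omega)+\delta^2\psi_\delta$, where the correction $\psi_\delta$ lies in the span of $1,\cos s,\sin s$. The correction is chosen by the implicit function theorem so that $\int k=2\pi\omega$ and $\int e^{i\theta(s)}ds=0$, where $\theta(s)=\int_0^s k$. The relevant Jacobian at $\delta=0$ is nondegenerate, since these modes couple linearly to the three constraints. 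With this construction, $\frac{d}{dt}K_{\mathrm{osc}}|_{t=0}= -C\delta^2 w_{n_0}\hat\lambda_{c,\omega}+O(\delta^3)>0$ for small $\delta$, and $K_{\mathrm{osc}}(\gamma_0)=O(\delta^2)\le\varepsilon$. Continuity in time then gives a positive $t_0$.

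I expect the main obstacle to be the second-order expansion of $\frac{d}{dt}K_{\mathrm{osc}}$. One must carefully track the cross terms from the length factor and from the $\delta^2\psi_\delta$ correction, and confirm that the result is exactly a positive multiple of $p_c(n/\omega)$ on the mode $n_0$. The correction enters only through modes $0,\pm\omega$, which are neutral at first order, so its contribution should be $O(\delta^3)$; this is the point to check. If the paper's stability argument has already expressed the derivative as a quadratic form in $k-\bar k$ plus a cubic remainder, I would instead quote that identity and only verify the sign on a single mode.
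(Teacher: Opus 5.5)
Your argument is correct, and its core matches the paper's. The paper also reduces to $e'(0)=-Q_{c,\omega}[f]+R_{c,\omega}[f]$ from Proposition~\ref{prop:quadratic_expansion_general_c}, so your fallback of quoting that identity is exactly what is done, and it evaluates this on a single mode $n_0$ with $p_c(n_0/\omega)=\hat\lambda_{c,\omega}$. The weight is exactly one, $Q_{c,\omega}[f]=2\pi\omega\sum_n p_c(n/\omega)|a_n|^2$, rather than $(n/\omega)^2$. Only its positivity matters for you, so this does not affect the conclusion.

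\textbf{Where you differ: the test curve.}
\begin{itemize}
\item \emph{The paper's construction.} It uses the support function $h_\eta(\vartheta)=1+\eta\cos(n_0\vartheta/\omega)$ on $\R/2\pi\omega\mathbb Z$. This gives, with no further argument, a closed locally convex curve of turning number $\omega$ and length exactly $2\pi\omega$, since $\int\rho_\eta\,d\vartheta=\int h_\eta\,d\vartheta$. The cost is that $f_\eta=\rho_\eta^{-1}-1$ is a single cosine only in the normal angle $\vartheta$. One must therefore pass to arclength via $ds=\rho_\eta\,d\vartheta$ and absorb $O(\eta^2)$ errors in $f_\eta$, $(f_\eta)_s$ and $(f_\eta)_{ss}$.
\item \emph{Your construction.} You prescribe $k$ directly in arclength. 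Then $f$ is exactly $\delta\cos(n_0 s/\omega)$ plus a correction in the neutral modes $0,\pm\omega$. Hence $Q_{c,\omega}[f]=\delta^2Q_{c,\omega}[\phi]+O(\delta^3)$ is immediate and $R_{c,\omega}[f]=O(\delta^3)$ follows from the remainder bound. The price is an implicit function argument to close the curve.
\end{itemize}

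\textbf{Fixing the implicit function step.} As literally written, with the ansatz $k=1+\delta\phi+\delta^2\psi_\delta$, the derivative of the constraint map in $\psi_\delta$ vanishes at $\delta=0$ because of the $\delta^2$ prefactor. Two repairs work:
\begin{itemize}
\item Solve for $\psi=\alpha+\beta\cos s+\gamma\sin s$ in $k=1+\delta\phi+\psi$. The Jacobian of $\bigl(\int k\,ds-2\pi\omega,\ \int e^{i\theta}\,ds\bigr)$ in $(\alpha,\beta,\gamma)$ at the origin is invertible, with determinant $2\pi^3\omega^3$. In fact the first constraint forces $\alpha=0$. Then deduce $\psi=O(\delta^2)$ from the vanishing of the $\delta$-derivative of the constraints at the origin, which is precisely $\int\phi\,ds=\int\phi e^{\pm is}\,ds=0$.
\item Alternatively, divide the constraints by $\delta^2$ before applying the implicit function theorem.
\end{itemize}

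\textbf{Minor slip.} The normalisation should be $L=2\pi\omega$, not $2\pi$. This is consistent with your own use of $s\in[0,2\pi\omega)$.
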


\begin{rem}
    In fact, the above initial curve $\gamma_0$ can be taken even as an explicit smooth perturbation of the $\omega$-circle, as one can immediately see from the proof.
\end{rem}

As corollaries of the above results, we determine two sharp ranges of the parameter $c\in\R$ for the stability of circles.
In what follows, we say that an $\omega$-circle is \emph{stable} (resp.\ \emph{unstable}) if the assertion of Theorem \ref{thm:intro_stability} (resp.\ Theorem \ref{thm:intro_instability}) holds.

The first case is about the stability of an embedded circle.

\begin{cor}\label{cor:embedded_circle_stability}
    If $c<\frac32$, then an embedded circle is stable.
    In addition, if $c>\frac32$, then an embedded circle is unstable.
\end{cor}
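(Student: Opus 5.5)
Since an embedded circle has $\omega=1$, the corollary is a direct consequence of Theorems \ref{thm:intro_stability} and \ref{thm:intro_instability} once we know the sign of $\hat\lambda_{c,1}$. The plan is to show that $\hat\lambda_{c,1}>0$ for $c<\frac32$ and $\hat\lambda_{c,1}<0$ for $c>\frac32$. With $\omega=1$ we have
\[
\hat\lambda_{c,1}=\min_{n\in\mathbb Z\setminus\{0,\pm1\}} p_c(n)=\min_{|n|\ge2} p_c(n),
\]
and since $p_c$ is even it suffices to consider integers $n\ge2$.

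The key observation is that $p_c$ vanishes at $x=\pm1$, since $p_c(1)=2-(6c+2)+8c=2c-2c=0$. Factoring,
\[
p_c(x)=2x^4-(6c+2)x^2+8c=2(x^2-1)(x^2-4c),
\]
which one checks by expanding: $2(x^4-4cx^2-x^2+4c)=2x^4-(8c+2)x^2+8c$. This does not match, so the factorisation needs correcting. Writing $y=x^2$, the quadratic $2y^2-(6c+2)y+8c$ has root $y=1$ only if $2-6c-2+8c=2c=0$, so $p_c(1)=2c\ne0$ in general. I will therefore not rely on a factorisation and instead treat $q(y):=2y^2-(6c+2)y+8c$ directly on $y=n^2\in\{4,9,16,\dots\}$.

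For the stable direction, consider $p_c(2)=32-4(6c+2)+8c=24-16c$, which is positive exactly when $c<\frac32$. For $n\ge3$, rewrite
\[
p_c(n)=2n^4-2n^2-c(6n^2-8).
\]
For $c\le0$ this is positive. For $0<c<\frac32$, it is bounded below by $2n^4-2n^2-\frac32(6n^2-8)=2n^4-11n^2+12$. At $n=3$ this equals $162-99+12=75>0$, and the expression is increasing in $n^2\ge 11/4$, so it stays positive for all $n\ge3$. Hence $\hat\lambda_{c,1}>0$ whenever $c<\frac32$, and Theorem \ref{thm:intro_stability} gives stability.

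For the unstable direction, if $c>\frac32$ then $\hat\lambda_{c,1}\le p_c(2)=24-16c<0$, and Theorem \ref{thm:intro_instability} gives instability.

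The only potential obstacle is the uniform control over $n\ge3$. The linear-in-$c$ monotonicity used above, with coefficient $-(6n^2-8)<0$, reduces this to the single endpoint $c=\frac32$, which is then an elementary polynomial check.
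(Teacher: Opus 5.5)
Your argument is correct and is essentially the paper's. The paper reads the corollary off Proposition~\ref{prop:stability_interval} with $\omega=1$, where $c_1^-=-\infty$ and $c_1^+=g(2)=\frac32$ for $g(x)=\frac{x^2(1-x^2)}{4-3x^2}$ (via $p_c(x)=2(4-3x^2)(c-g(x))$ and the fact that $g(n)$ increases for $n\ge2$), which is the same content as your observation that the mode $n=2$ is binding and that $p_{3/2}(n)>0$ for $n\ge3$; do delete the abortive factorisation from the write-up, since $p_c(1)=2c$ rather than $0$, although nothing in your final argument depends on it.
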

    
In particular, this extends the known stability results for embedded circles for the special flows ($c=-1,0,\frac12$) to the sharp parameter range $c\in(-\infty,\frac32)$.
The threshold case $c=\frac32$ remains open.

The second case is about the stability of all $\omega$-circles.

\begin{cor}\label{cor:stability_all_omega}
    If $\frac19\leq c\leq 1$, then an $\omega$-circle is stable for all $\omega\geq1$.
    In addition, if either $c<\frac19$ or $c>1$, then an $\omega$-circle is unstable for some $\omega\geq1$.
\end{cor}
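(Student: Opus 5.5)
The plan is to reduce both statements to elementary facts about the polynomial $p_c$ in \eqref{eq:def_p_c}, and then apply Theorem~\ref{thm:intro_stability} when $\hat\lambda_{c,\omega}>0$ and Theorem~\ref{thm:intro_instability} when $\hat\lambda_{c,\omega}<0$. Set $y=x^2$ and write $p_c(x)=q_c(y):=2y^2-(6c+2)y+8c$. The discriminant of $q_c$ is
\[
(6c+2)^2-64c=36c^2-40c+4=4(9c-1)(c-1).
\]
It vanishes exactly at $c=\frac19$ and $c=1$, and it is negative exactly for $\frac19<c<1$.

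\emph{Stability for $\frac19\le c\le1$.} If $\frac19<c<1$, the discriminant is negative, so $q_c>0$ on all of $\R$. Hence $p_c(n/\omega)>0$ for every $n$. The minimum in \eqref{eq:def_lambda_hat} is attained, since $p_c(x)\to\infty$ as $|x|\to\infty$. Therefore $\hat\lambda_{c,\omega}>0$ for every $\omega\ge1$.

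At the endpoints $q_c$ has a double root at $y_*=\frac{6c+2}{4}$:
\begin{itemize}
\item for $c=\frac19$ this gives $y_*=\frac23$;
\item for $c=1$ it gives $y_*=2$.
\end{itemize}
In both cases $q_c\ge0$, with equality only at $y=y_*$. However, $y_*$ is not the square of a rational number, so it never equals $(n/\omega)^2$. Hence $p_c(n/\omega)>0$ for all integers $n$. Again the minimum is attained, so $\hat\lambda_{c,\omega}>0$ for all $\omega$. Theorem~\ref{thm:intro_stability} then gives stability of every $\omega$-circle.

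\emph{Instability for $c<\frac19$ or $c>1$.} Now the discriminant is positive, so $q_c$ has two real roots $y_1<y_2$ and $q_c<0$ on $(y_1,y_2)$. I claim $y_2>0$ in every case:
\begin{itemize}
\item for $c>1$ the product $4c$ and the sum $3c+1$ of the roots are both positive, so $0<y_1<y_2$;
\item for $0\le c<\frac19$ the product is $\ge0$ and the sum is positive, so $0\le y_1<y_2$;
\item for $c<0$ the product $4c$ is negative, so $y_1<0<y_2$.
\end{itemize}
Thus the interval $I:=(\max\{y_1,0\},y_2)$ is a nonempty open subset of $(0,\infty)$. Its preimage under $x\mapsto x^2$ contains a nonempty open interval of positive reals. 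After removing the point $x=1$, this set still contains a positive rational number $x=n/\omega$ with $n,\omega\ge1$ and $n\ne\omega$. Since $n\ne0$ and $n\ne\pm\omega$, we get
\[
\hat\lambda_{c,\omega}\le p_c(n/\omega)=q_c(x^2)<0.
\]
Theorem~\ref{thm:intro_instability} then shows that this particular $\omega$-circle is unstable.

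There is no genuine obstacle here. The only points needing care are two. First, at the endpoint cases the double root must be irrational as a square, so that it is never hit by $n/\omega$. Second, in the unstable regime the negativity interval must lie at least partly in $y>0$ and must contain a rational square other than $0$ and $1$; this follows from the density of the rationals, since $\omega$ is free to choose.
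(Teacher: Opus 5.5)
Your proof is correct. It reaches the corollary by a more direct route than the paper, though both rest on the same arithmetic facts.

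\textbf{The paper's route.} The paper deduces the corollary from Proposition~\ref{prop:stability_interval}. It writes $p_c(x)=2(4-3x^2)\bigl(c-g(x)\bigr)$ with $g(x)=\frac{x^2(1-x^2)}{4-3x^2}$, and characterises $\hat\lambda_{c,\omega}>0$ for each fixed $\omega$ as $c_\omega^-<c<c_\omega^+$. It then uses two facts:
\begin{enumerate}
\item The extremal values $\max_{|x|<2/\sqrt3}g=g(\sqrt{2/3})=\frac19$ and $\inf_{|x|>2/\sqrt3}g=g(\sqrt2)=1$ are attained only at irrational points. Hence $c_\omega^-<\frac19$ and $c_\omega^+>1$ for $\omega\ge2$, with $c_1^-=-\infty$ and $c_1^+=\frac32$.
\item Density of the points $n/\omega$ gives $\sup_\omega c_\omega^-=\frac19$ and $\inf_\omega c_\omega^+=1$, which yields the instability half.
\end{enumerate}

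\textbf{Your route.} You fix $c$ and read off the sign of $p_c$ on all of $\R$ from the discriminant $4(9c-1)(c-1)$ of $q_c$. This gives global positivity for $c\in(\frac19,1)$ and a double root at an irrational square at the endpoints. Otherwise there is a genuine negativity interval meeting $x>0$, which some $n/\omega\neq1$ hits by density. Your case analysis of the signs of $y_1,y_2$ is right. You also correctly remove the translation mode $x=1$; this matters because $p_c(1)=2c$ is negative for $c<0$.

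\textbf{Comparison.} Both arguments use the same two ingredients: irrationality of $\sqrt2$ and $\sqrt{2/3}$, and density of the rationals. Your version is shorter and self-contained for this corollary. It is essentially the root description via $r_c^\pm$ in the final (unlabelled, proof-omitted) proposition of Section~\ref{sec:stability}. The paper's detour through $g$ costs a little more, but it produces the $\omega$-dependent thresholds $c_\omega^\pm$. Those thresholds also give Corollary~\ref{cor:embedded_circle_stability} via $c_1^+=\frac32$, and they underlie the quantitative arithmetic stability pattern.
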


This extends the stability result for $\omega$-circles for the free elastic flow ($c=\frac12$) to the optimal parameter range $c\in[\frac19,1]$.

\begin{rem}\label{rem:Chen}
    In \cite{CWW23} it is claimed that all $\omega$-circles are stable for the Chen flow ($c=-1$), which contradicts our result.
    Unfortunately, the arguments in \cite{CWW23} contain some errors, and essentially work only for $\omega=1$.
    In particular, the Poincar\'e-type estimates used throughout in \cite{CWW23} mistakenly involve $\omega$ in the coefficients, yielding incorrect assertions for $\omega\geq2$.
    Indeed, if one considers a smooth perturbation of an $\omega$-circle as in the proof of Theorem \ref{thm:intro_instability} below, then we can construct an explicit counterexample to the key estimate \cite[Proposition 8]{CWW23} when $\omega\geq2$.
    Therefore, our result not only improves the $\omega=1$ case of \cite{CWW23} but also provides the first appropriate stability analysis for $\omega\geq2$.
\end{rem}

In fact, we obtain a more general criterion (Proposition \ref{prop:stability_interval}), which includes the above two corollaries as special cases. 
Moreover, this general criterion also yields the following observation.

\begin{rem}[Arithmetic stability pattern]\label{rem:arithmetic_stability_pattern}
    Numerical plots in Figure \ref{fig:stability_region_1} suggest that, for each $\omega\geq2$, the stability region with respect to the parameter $c$ is largely concentrated around the interval $[\frac19,1]$, while the exact regions exhibit irregular oscillations as $\omega$ varies.
    This behaviour reflects an underlying arithmetic structure, since stability is determined by the sign of $p_c(n/\omega)$.
    In particular, at the threshold values $c=1$ and $c=\frac19$, we have
    \[
    p_1(x)=2(x^2-2)^2,
    \qquad
    p_{\frac19}(x)=\frac{2}{9}(3x^2-2)^2,
    \]
    so that the transition between stability and instability is governed by how well rational numbers of the form $n/\omega$ approximate $\sqrt{2}$ and $\sqrt{2/3}$.
    As a consequence, for a fixed parameter $c\in\R$, the stability/instability transition in $\omega$ may not occur at a single threshold, but may instead involve irregular jumps, which is somewhat unexpected from a geometric viewpoint.
    For example, explicit computations and estimates imply that, for $c=1.001$, an $\omega$-circle is stable if and only if
    \[
    \omega\in\{1,2,3,4,6,8,9,11,13,16,18,23\},
    \]
    see also Figure \ref{fig:stability_region_2}.
    A similar phenomenon also occurs near $c=\frac19$.
\end{rem}

\begin{figure}[htbp]
    \centering
    \includegraphics[width=0.8\linewidth]{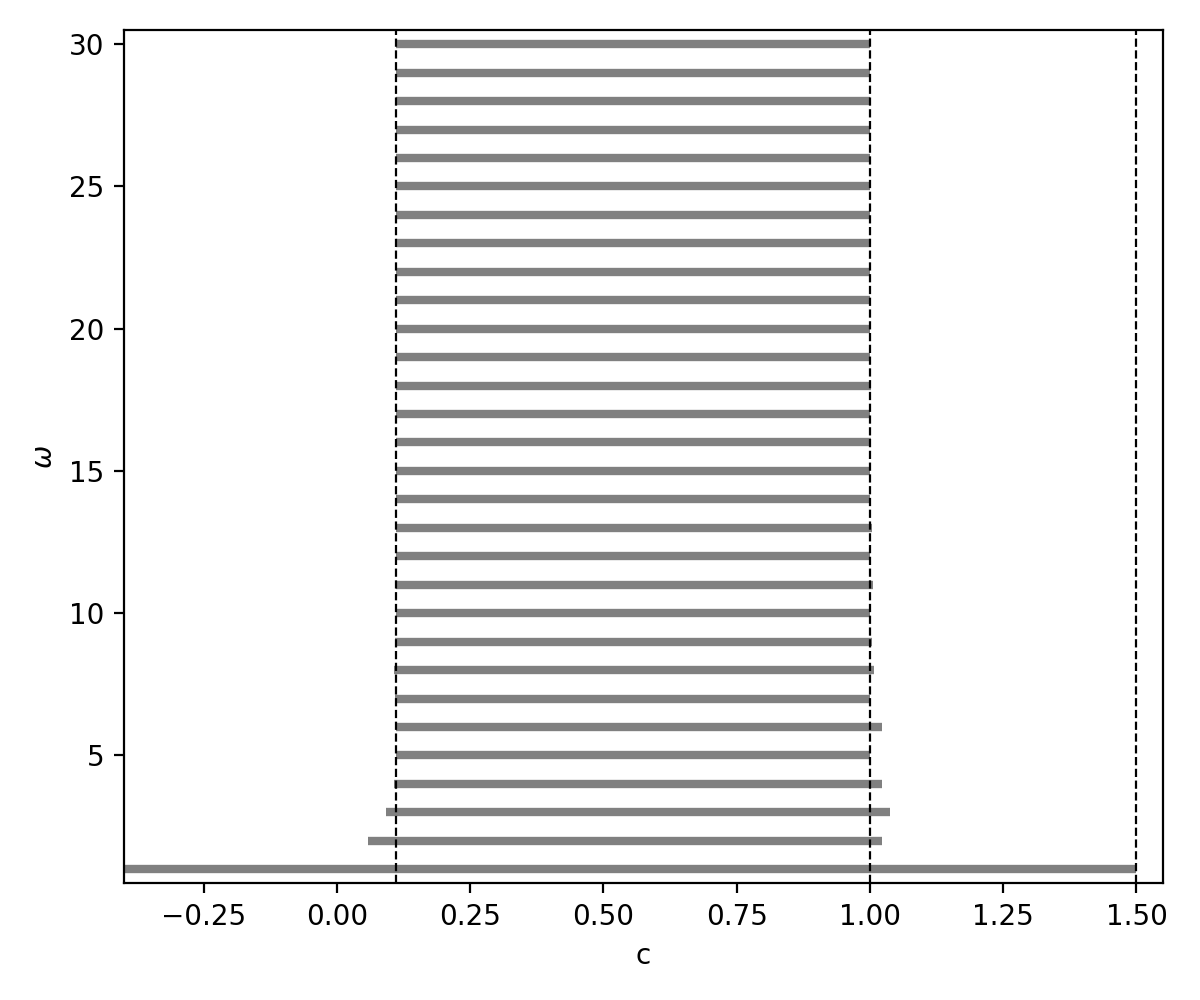}
    \caption{Stability region of the $\omega$-circle in the $(c,\omega)$-plane.
    For each integer $\omega\ge1$, the grey horizontal segment indicates the range of $c$ for which $\hat{\lambda}_{c,\omega}>0$, i.e., the $\omega$-circle is stable. The vertical dashed lines correspond to the thresholds $c=\frac19,1,\frac32$.}
    \label{fig:stability_region_1}
\end{figure}

\begin{figure}[htbp]
    \centering
    \includegraphics[width=0.8\linewidth]{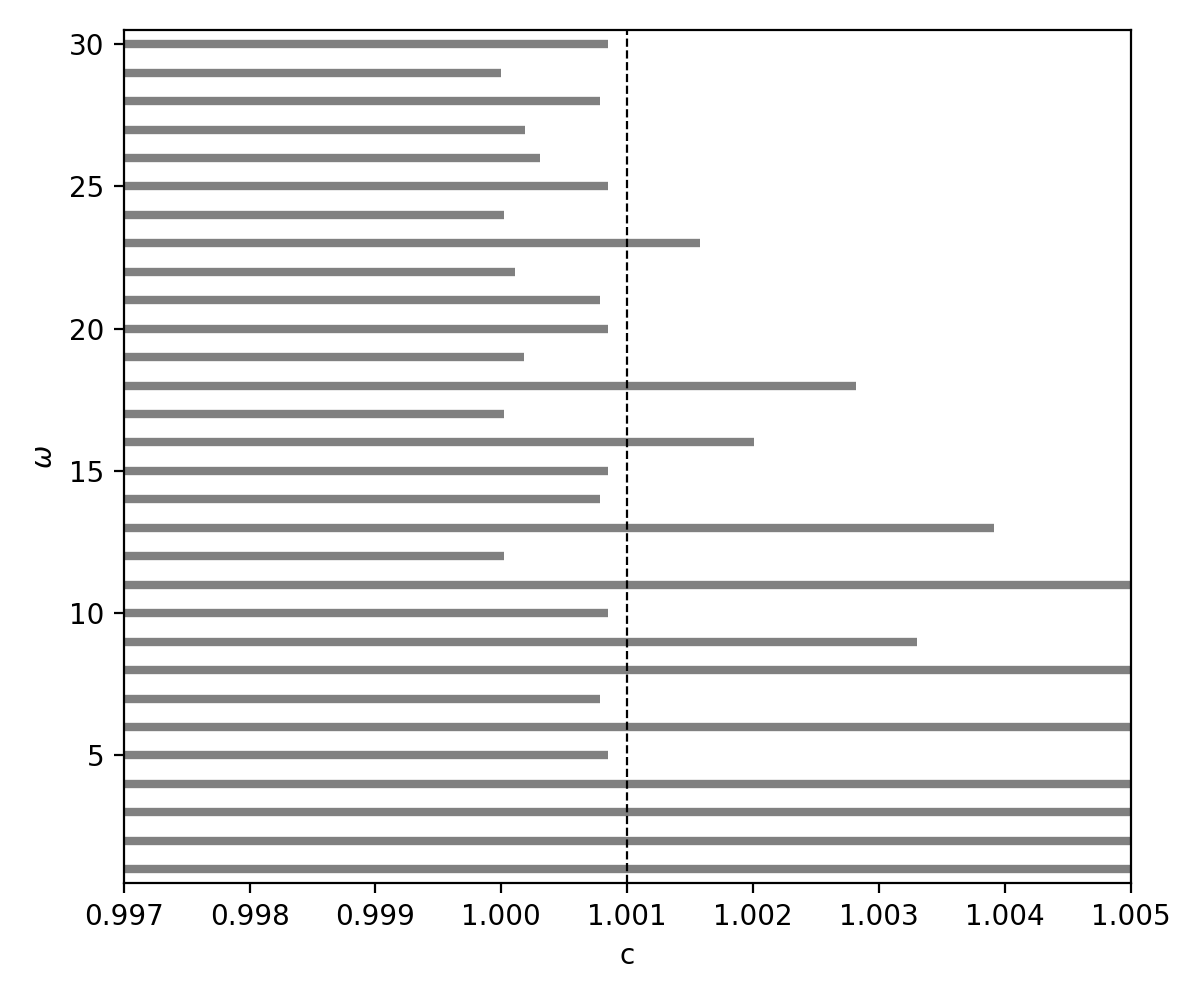}
    \caption{Zoom of the stability diagram around $c=1.001$.}
    \label{fig:stability_region_2}
\end{figure}

In the proof of Theorem \ref{thm:intro_stability}, the length is coupled to scale for $c\ne0$, so that we are able to separate shape from scale by passing to a length-preserving normalisation.
We then establish variational estimates to prove a general exponential convergence result for length-normalised flows \eqref{L-CCDF} under the same spectral condition (Theorem \ref{thm:normalised_circle_stability}), and finally translate it back to the original unnormalised flow.
The proof of Theorem \ref{thm:intro_instability} is based on an explicit construction of an unstable perturbation of an $\omega$-circle.
We emphasise that the argument is purely variational and does not rely on linear stability theory, despite the use of elementary Fourier expansions.

This paper is organised as follows:
In Section \ref{sec:stationary} we classify the stationary solutions, proving Theorem \ref{thm:intro_stationary}.
In Section \ref{sec:stability} we study the dynamical stability of circles and prove Theorem \ref{thm:intro_stability}, Theorem \ref{thm:intro_instability}, and their corollaries.
Finally, in Section \ref{sec:discussion} we discuss further properties and open problems.

\subsection*{Acknowledgements}
The first author is supported by JSPS KAKENHI Grant Numbers JP23H00085, JP23K20802, and JP24K00532.
The second author is partially supported by ARC grants 
FT250100880 and DP250101080.

\section{Classification of stationary solutions}\label{sec:stationary}

In this section we prove Theorem \ref{thm:intro_stationary}.
We first recall the definitions of elliptic functions and integrals (see also \cite[Appendix A]{Miura_elastica_survey} for details).
For $m\in(0,1)$ let
\[
F(x;m):=\int_0^x\frac{d\theta}{\sqrt{1-m\sin^2\theta}}, \quad K(m):=F(\pi/2;m).
\]
Note that $F'>0$ and hence invertible.
Let $\am(\cdot;m):=F^{-1}(\cdot;m)$ and
\begin{align}
    \cn(x;m)&:=\cos(\am(x;m)), \quad \sn(x;m):=\sin(\am(x;m)),\\
    \dn(x;m)&:=\sqrt{1-m\sn^2(x;m)}, \quad x\in\R.
\end{align}
The function $\cn$ (resp.\ $\sn$) is even (resp.\ odd) and anti-periodic with anti-period $2K(m)$, and in particular $4K(m)$-periodic.
Recall that
\[
\cn'=-\sn\dn, \quad \sn'=\cn\dn, \quad \dn'=-m\sn\cn.
\]

Now we define a family of closed curves which will be shown to be stationary solutions.
We call a curve $\gamma:\S\to\R^2$ a \emph{super-lemniscate} if the curve $\gamma$ has, up to rescaling and reparametrisation, the curvature 
\[
k(s)=\frac{1}{\sqrt{c}}\cn\Big(s;\frac{1}{2}\Big)
\]
with $c=c_j:=\frac{2}{(4j-1)^2}$ for some integer $j\geq1$.
The corresponding tangential angle $\theta$ is given by $\theta(s)=\sqrt{2/c}\arcsin( \sqrt{1/2}\sn(s;\frac{1}{2}) )$
and hence in particular, it ranges
\[
\left[-\sqrt{\frac{2}{c}}\frac{\pi}{4},\sqrt{\frac{2}{c}}\frac{\pi}{4}\right].
\]
On each quarter-period $\theta$ changes by exactly $\frac{4j-1}{4}\pi$, which causes the image of $\gamma$ to wind progressively more intensely as $j$ increases. This winding is matched by an equal un-winding in the next quarter-period, and thus the net turning of $\gamma$ is zero.

One of the main ingredients of the proof is the following classification for the corresponding ODE.

\begin{prop}\label{prop:ODE}
  For any $c\geq0$ and $k_0,k_1\in\R$ there is a unique solution $k:\R\to\R$ to the initial value problem
  \[
  k''+ck^3=0, \qquad k(0)=k_0,\quad k'(0)=k_1.
  \]
  If $c=0$, then $k(s)=k_1s+k_0$.
  If $c>0$, then
  $$k(s)=\frac{\alpha}{\sqrt{c}}\cn\Big(\alpha s+\beta;\frac{1}{2}\Big),$$
  where $k_0=\frac{\alpha}{\sqrt{c}}\cn(\beta;\frac{1}{2})$ and $k_1=-\frac{\alpha^2}{\sqrt{c}}\sn(\beta;\frac{1}{2})\dn(\beta;\frac{1}{2})$.
\end{prop}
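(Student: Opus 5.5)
The plan has three parts: global existence and uniqueness, a check that the given family solves the equation, and a proof that every initial datum is attained. Throughout, write $m=\frac12$.

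\emph{Global existence and uniqueness.} Since $k\mapsto -ck^3$ is locally Lipschitz, Picard--Lindelöf gives a unique local solution. To make it global, I will use the conserved energy
\[
E(s):=\tfrac12 (k')^2+\tfrac c4 k^4.
\]
Indeed $E'=k'(k''+ck^3)=0$. When $c\ge0$ both terms are nonnegative, so $|k'|\le\sqrt{2E(0)}$ on the whole interval of existence. Hence $|k|$ grows at most linearly, no blow-up can occur, and the solution extends to all of $\R$. The case $c=0$ is immediate: $k''=0$ gives $k(s)=k_1s+k_0$.

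\emph{Verification for $c>0$.} Set $u(s)=\cn(s;m)$. From $\cn'=-\sn\dn$, $\sn'=\cn\dn$ and $\dn'=-m\sn\cn$ we get
\[
u''=-\cn\dn^2+m\sn^2\cn=-\cn\bigl(1-2m\sn^2\bigr).
\]
With $m=\frac12$ this is $-\cn(1-\sn^2)=-\cn^3$. Thus $u''+u^3=0$, and this is exactly why the modulus $\frac12$ appears. Now put $k=\frac{\alpha}{\sqrt c}u(\alpha s+\beta)$. Then $k''=\frac{\alpha^3}{\sqrt c}u''$ and $ck^3=c\cdot\frac{\alpha^3}{c^{3/2}}u^3=\frac{\alpha^3}{\sqrt c}u^3$, so $k''+ck^3=0$. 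The stated formulas for $k_0=k(0)$ and $k_1=k'(0)$ follow by direct differentiation.

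\emph{Surjectivity onto initial data.} By uniqueness, it remains to show that every $(k_0,k_1)\in\R^2$ arises from some $\alpha,\beta$. This is the step I expect to need the most care. If $k_0=k_1=0$, take $\alpha=0$. Otherwise, pick $\alpha>0$ by matching the energy. For the ansatz,
\[
E=\frac{\alpha^4}{c}\Bigl(\tfrac12\sn^2\dn^2+\tfrac14\cn^4\Bigr).
\]
Using $\dn^2=1-\tfrac12\sn^2$ and $\cn^2=1-\sn^2$, the bracket equals $\frac14$ identically, so $E=\frac{\alpha^4}{4c}$. We therefore choose $\alpha=(4cE_0)^{1/4}>0$, where $E_0:=\frac12k_1^2+\frac c4k_0^4>0$.

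It then suffices to find $\beta$ with $(u(\beta),u'(\beta))=\bigl(\frac{\sqrt c}{\alpha}k_0,\frac{\sqrt c}{\alpha^2}k_1\bigr)$. This target point lies on the level set $\{\tfrac12 v^2+\tfrac14 u^4=\tfrac14\}$ in the $(u,v)$-plane. That level set is a single closed curve, star-shaped and diffeomorphic to a circle, containing no equilibrium since the only equilibrium is the origin. The orbit $\beta\mapsto(\cn\beta,\cn'\beta)$ is periodic, stays on this curve, and has nonvanishing velocity $(u',-u^3)\neq0$ there. A periodic orbit of a nonvanishing vector field on a connected closed curve covers the whole curve, so the required $\beta$ exists. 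Alternatively, one can argue concretely: as $\beta$ runs over $[0,4K]$, $\cn$ sweeps $[-1,1]$ twice with both signs of $\sn\dn$. Uniqueness then identifies the solution with the explicit formula.
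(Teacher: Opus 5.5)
Your proposal is correct, but the key step of matching the initial data goes the other way round from the paper.

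\textbf{The paper's route.} It quotes uniqueness from standard ODE theory. Global existence then comes for free from the explicit formula, so your energy argument for global existence is harmless but not needed. After normalising the sign so that $k_1\le0$, it splits into two cases:
\begin{enumerate}
\item If $k_0=0$, it takes $\beta=K(\frac12)$ and solves directly for $\alpha$.
\item If $k_0\neq0$, it first fixes $\beta$ from the scale-invariant ratio $k_1/k_0^2$. It applies the intermediate value theorem to $f(t)=-\sqrt{c}\,k_0^2\sn(t)\dn(t)/\cn^2(t)$, which runs from $+\infty$ to $-\infty$ on $(-K(\frac12),K(\frac12))$. Only then does it set $\alpha=\sqrt{c}\,k_0/\cn(\beta)$, which may be negative.
\end{enumerate}

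\textbf{Your route.} You fix $\alpha>0$ first, using the first integral $E=\frac12(k')^2+\frac c4k^4$ together with the identity $\frac12\sn^2\dn^2+\frac14\cn^4\equiv\frac14$. You then obtain $\beta$ because the periodic orbit $(\cn,\cn')$ covers the connected level curve $\{\frac12v^2+\frac14u^4=\frac14\}$. Alternatively, you use the monotonicity of $\cn$ on $[0,2K]$ and $[2K,4K]$.

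\textbf{Comparison.} Your approach is uniform, with no sign normalisation or case split. It also makes explicit the conserved quantity that the paper later uses in the classification for $c<0$. The cost is a small phase-plane argument. The paper's approach needs nothing beyond the intermediate value theorem for one explicit function.
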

\begin{proof}
  Uniqueness follows by standard ODE theory.
  It is also easy to directly check that the given function is a solution to the initial value problem.
  It remains to show that, in the case $c>0$, for any $k_0,k_1$ the desired constants $\alpha,\beta$ can be chosen.
  Up to changing the sign we may assume that $k_1\leq0$.
  If $k_0=0$ then we choose $\beta=K(\frac{1}{2})$ so that $\cn(\beta;\frac{1}{2})=0$ and $\sn(\beta;\frac{1}{2})\dn(\beta;\frac{1}{2})=1/\sqrt{2}$, and then choose $\alpha=\sqrt{-k_1\sqrt{2c}}$.
  Suppose that $k_0\neq0$.
  Let
  \[
  f(t):=\Gchanged{-}\sqrt{c}k_0^2\frac{\sn(t;\frac{1}{2})\dn(t;\frac{1}{2})}{\cn^2(t;\frac{1}{2})},
  \]
  which is continuous on the interval $(-K(\frac{1}{2}),K(\frac{1}{2}))$ and $\lim_{t\to-K(\frac{1}{2})^+}f(t)=+\infty$ and $\lim_{t\to K(\frac{1}{2})^-}f(t)=-\infty$.
  Then we can choose $\beta\in(-K(\frac{1}{2}),K(\frac{1}{2}))$ such that $f(\beta)=k_1$, and then choose $\alpha=\sqrt{c}k_0/\cn(\beta;\frac{1}{2})$.
\end{proof}

We also need the following elementary but key computational lemma.

\begin{lem}\label{lem:root}
  Let $f:\R\to\R$ be the smooth even function given by
  \[
  f(t):=\int_0^{\pi/2}\frac{\cos(tx)}{\sqrt{\cos{x}}}dx.
  \]
  Then $f(t)=0$ if and only if $|t|=\frac{4j-1}{2}$ for some integer $j>0$.
\end{lem}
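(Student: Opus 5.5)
The plan is to evaluate $f$ in closed form by the classical Cauchy--Beta type integral
\[
\int_0^{\pi/2}\cos^{\nu-1}x\,\cos(ax)\,dx=\frac{\pi\,\Gamma(\nu)}{2^{\nu}\,\Gamma\big(\frac{\nu+a+1}{2}\big)\,\Gamma\big(\frac{\nu-a+1}{2}\big)},\qquad \operatorname{Re}\nu>0,\ a\in\R,
\]
and then read off the zeros from the poles of the Gamma function. With $\nu=\frac12$ and $a=t$ this gives
\[
f(t)=\frac{\pi^{3/2}}{\sqrt2\,\Gamma\big(\frac34+\frac t2\big)\,\Gamma\big(\frac34-\frac t2\big)}.
\]
Since $1/\Gamma$ is entire and vanishes exactly at the non-positive integers, $f(t)=0$ if and only if $\frac34+\frac t2=-m$ or $\frac34-\frac t2=-m$ for some integer $m\ge0$. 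These conditions are equivalent to $t=\pm\frac{4m+3}{2}=\pm\frac{4j-1}{2}$ with $j=m+1\ge1$, which is exactly the claim. Evenness of $f$ is also visible from the formula, as a consistency check.

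The main step is justifying the integral formula. One option is simply to cite it (it is standard, e.g.\ Gradshteyn--Ryzhik 3.631.9). For a self-contained argument I would proceed as follows.
\begin{enumerate}
\item Write $2\cos x=e^{-ix}(1+e^{2ix})$, so that
\[
2f(t)=\sqrt2\int_{-\pi/2}^{\pi/2}(1+e^{2ix})^{-1/2}e^{i(t+1/2)x}\,dx .
\]
\item Substitute $z=e^{2ix}$. This turns the integral into an integral of $(1+z)^{-1/2}z^{(t+1/2)/2-1}$ over the unit circle, with the branch cut placed along the negative real axis.
\item Deform the contour onto the two sides of the segment $[-1,0]$. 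The jump of $z^{\mu}$ across the cut produces a factor $\sin(\pi\mu)$ times the Beta integral $\int_0^1(1-r)^{-1/2}r^{\mu-1}\,dr=B(\mu,\frac12)$.
\item Combine these factors using the reflection and duplication formulas to obtain the displayed product of Gamma functions.
\end{enumerate}

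Step (iii) requires integrability at the endpoint $r=0$, so it is valid only for $t$ in a half-line, namely $t>-\frac12$. To cover all $t\in\R$, I would then argue by analytic continuation: both $f$ (extended to complex $t$, since the integrand is integrable and entire in $t$) and the right-hand side are entire functions of $t$, and they agree on an interval, hence agree everywhere.

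An alternative route to the same formula is to expand $(1+e^{2ix})^{-1/2}$ binomially and integrate termwise. I would avoid this, since the series converges only conditionally at $x=\pm\pi/2$.

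I expect the main obstacle to be the careful bookkeeping in the contour deformation: handling the branch choice, and controlling the small arcs around $z=-1$ and $z=0$, where the integrand has the integrable singularities $|1+z|^{-1/2}$ and $|z|^{\operatorname{Re}\mu-1}$ respectively. Once the closed form is established, the zero set follows immediately.
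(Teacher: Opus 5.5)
Your proposal is correct, but it takes a genuinely different route from the paper.

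\textbf{What the paper does.} It never evaluates $f$. One integration by parts, together with the identities $\cos tx=\cos(t-1)x\cos x-\sin(t-1)x\sin x$ and $2\sin(t-1)x\sin x=\cos(t-2)x-\cos tx$, gives the recursion $f(t)=-\frac{2t-3}{2t-1}f(t-2)$ for $t>1$. The paper then notes that $f>0$ on $[-1,1]$ because the integrand is positive there. Finally it locates the zeros interval by interval on $(2(j-1),2j]$ by induction.

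\textbf{How the two routes compare.} Your closed form $f(t)=\pi^{3/2}/\bigl(\sqrt2\,\Gamma(\frac34+\frac t2)\Gamma(\frac34-\frac t2)\bigr)$ contains exactly this recursion: it is $\Gamma(z+1)=z\Gamma(z)$ applied to both factors. It also gives more, namely simplicity of the zeros and the sign pattern of $f$. The price is reliance on a tabulated integral or a branch-cut contour argument. The paper's proof is entirely real-variable and elementary.

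\textbf{Your contour sketch.} It works. With $\mu=\frac t2+\frac14$ it gives $f(t)=\frac{1}{\sqrt2}\sin(\pi\mu)\,B(\mu,\frac12)$ for $t>-\frac12$. Only the reflection formula, not duplication, is needed to reach your Gamma expression.

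\textbf{A shorter finish.} For $t\ge0$ one has $\mu>0$ and hence $B(\mu,\frac12)>0$. So the zeros on $[0,\infty)$ come directly from $\sin(\pi\mu)=0$, i.e.\ $t=\frac{4j-1}{2}$ with $j\ge1$. Evenness of $f$ then handles $t<0$. With this, neither the analytic continuation step nor any Gamma-function identity is required.
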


\begin{proof}
  We first note that $f(t)>0$ whenever $|t|\leq1$ by the positivity of the integrand.

  Now we prove the recursive relation that
  \begin{equation*}\label{eq:induction}
    f(t)=-\frac{2t-3}{2t-1}f(t-2) \quad \text{for any}\ t>1.
  \end{equation*}
  Using $\cos{tx}=\cos{(t-1)x}\cos{x}-\sin{(t-1)x}\sin{x}$, we have
  \[
  \int_0^{\pi/2}\frac{\cos{tx}}{\sqrt{\cos{x}}}dx = \int_0^{\pi/2}\cos{(t-1)x}\sqrt{\cos{x}}dx - \int_0^{\pi/2}\frac{\sin{(t-1)x}\sin{x}}{\sqrt{\cos{x}}}dx.
  \]
  Integration by parts for the first term implies that, since $t\neq1$,
  \begin{align*}
    \int_0^{\pi/2}\frac{\cos{tx}}{\sqrt{\cos{x}}}dx &= \frac{1}{2(t-1)}\int_0^{\pi/2}\frac{\sin{(t-1)x}\sin{x}}{\sqrt{\cos{x}}}dx - \int_0^{\pi/2}\frac{\sin{(t-1)x}\sin{x}}{\sqrt{\cos{x}}}dx \\
    &= \frac{3-2t}{2(t-1)}\int_0^{\pi/2}\frac{\sin{(t-1)x}\sin{x}}{\sqrt{\cos{x}}}dx.
  \end{align*}
  Using $\sin{(t-1)x}\sin{x}=-\frac{1}{2}(\cos{tx}-\cos{(t-2)x})$, we get
  \[
  \int_0^{\pi/2}\frac{\cos{tx}}{\sqrt{\cos{x}}}dx = - \frac{3-2t}{4(t-1)}\int_0^{\pi/2}\frac{\cos{tx}}{\sqrt{\cos{x}}}dx + \frac{3-2t}{4(t-1)}\int_0^{\pi/2}\frac{\cos{(t-2)x}}{\sqrt{\cos{x}}}dx,
  \]
  and hence $\frac{2t-1}{4(t-1)}f(t)=\frac{2t-3}{4(t-1)}f(t-2)$.

  We finally complete the proof.
  If $1<t\leq2$, then by the above recursive relation and by the fact that $f(t-2)>0$, we deduce that $f(t)=0$ if and only if $t=\frac32$.
  Now let $Z_j:=\{t\in(2(j-1),2j]:f(t)=0\}$ for positive integers $j$.
  We already know that $Z_1=\{\frac{3}{2}\}$.
  On the other hand, by using the above recursive relation inductively in $j$, we deduce that $Z_j=\{\frac{4j-1}{2}\}$ for all $j\geq1$.
  Since $f$ is even, the proof is now complete.
\end{proof}

We are now ready to classify the closed stationary solutions.

\begin{proof}[Proof of Theorem \ref{thm:intro_stationary}]
  Throughout the proof we work with arclength parametrisation.
  The case $c=0$ is obvious.
  We argue the case $c<0$ by contradiction, so suppose that there is a closed stationary solution.
  Multiplying the equation $k_{ss}+ck^3=0$ by $k_s$ implies, after integration, that there is a constant $A\in\R$ such that for any $s\in\R/L\mathbb{Z}$ (where $L$ denotes the length),
  \[
  k_s^2(s)=\frac{-c}{2}k^4(s)-A.
  \]
  Since the curve is closed, there is a maximising point $s_M$ of $k^4$.
  Then $k_s(s_M)=0$, and hence $A=\frac{-c}{2}k^4(s_M)$.
  However this implies that the right-hand side is nonpositive and hence $k_s^2\equiv0$. 
  This contradicts that the curve is closed.

  We finally consider the case $c>0$.
  We first suppose that the equation $k''+ck^3=0$ admits a closed stationary solution and derive necessary conditions.
  By Proposition \ref{prop:ODE} and by the fact that $k$ is not identically zero, up to rescaling ($\alpha=1$) the curvature is necessarily represented by
  \[
  k(s)=\frac{1}{\sqrt{c}}\cn\Big(s+\beta;\frac{1}{2}\Big), \quad s\in\R/L\mathbb{Z},
  \]
  where $L$ denotes the length; for periodicity, $L$ needs to be of the form $L=4pK(\frac{1}{2})$ for some positive integer $p>0$.
  Up to shifting the parameter we may assume that $\beta=0$.
  Since the antiderivative of $\cn(x;m)$ is $\frac{1}{\sqrt{m}}\arcsin(\sqrt{m}\sn(x;m))$, the tangential angle $\theta$ (i.e., the antiderivative of $k$) must be represented by, up to rotation (i.e., up to addition of a constant to $\theta$),
  \[
  \theta(s)=\sqrt{\frac{2}{c}}\arcsin\left(\frac{1}{\sqrt{2}}\sn\Big(s;\frac{1}{2}\Big) \right).
  \]
  The unit tangent is $\partial_s\gamma(s)=(\cos\theta(s),\sin\theta(s))$, so the curve is closed if and only if $\int_0^L(\cos\theta(s),\sin\theta(s))\,ds=0$.
  Thanks to odd-symmetry and $2K(\frac{1}{2})$-anti-periodicity of $\sn(\cdot;\frac{1}{2})$, we have $\sin\theta(s)=-\sin\theta(L-s)$ and hence $\int_0^L\sin\theta(s)ds=0$ always holds.
  Therefore the remaining effective condition is that $\int_0^L\cos\theta(s)ds=0$.
  By symmetry and periodicity of $\cn(\cdot;\frac{1}{2})$, this is equivalent to the condition that
  \[
  \int_0^{K(\frac{1}{2})}\cos\theta(s)ds=\int_0^{K(\frac{1}{2})}\cos\left(\sqrt{\frac{2}{c}}\arcsin\left(\frac{1}{\sqrt{2}}\sn\Big(s;\frac{1}{2}\Big) \right)\right)ds=0.
  \]
  Through the changes of variables $x=\am(s,\frac{1}{2})$, $z=\frac{1}{\sqrt{2}}\sin x$, $\varphi=\arcsin{z}$, and $\varphi':=2\varphi$, we compute
  \begin{align*}
    &\int_0^{K(\frac{1}{2})}\cos\left(\sqrt{\frac{2}{c}}\arcsin\left(\frac{1}{\sqrt{2}}\sn\Big(s;\frac{1}{2}\Big) \right)\right)ds \\
    =& \int_0^{\pi/2}\cos\left(\sqrt{\frac{2}{c}}\arcsin\left(\frac{1}{\sqrt{2}}\sin{x} \right)\right)\frac{1}{\sqrt{1-\frac{1}{2}\sin^2x}}dx\\
    =& \int_0^{1/\sqrt{2}}\cos\left(\sqrt{\frac{2}{c}}\arcsin{z}\right)\frac{1}{\sqrt{1-z^2}\sqrt{1-2z^2}}dz\\
    =& \int_0^{\pi/4}\cos\left(\sqrt{\frac{2}{c}}\varphi\right)\frac{1}{\sqrt{1-2\sin^2\varphi}}d\varphi = \frac{1}{2}\int_0^{\pi/2}\cos\left(\frac{1}{\sqrt{2c}}\varphi'\right)\frac{1}{\sqrt{\cos{\varphi'}}}d\varphi'.
  \end{align*}
  By Lemma \ref{lem:root} the last integral vanishes if and only if $1/\sqrt{2c}=(4j-1)/2$, that is, $c=c_j$ for some integer $j\geq 1$.
  Therefore we obtain the necessity.

  The sufficiency is easy to verify by retracing the above computations in reverse.
  Thus the proof is complete.
\end{proof}

\section{Dynamical stability of circles}\label{sec:stability}

Throughout this section, let $\Gamma:\S\times[0,T)\to\R^2$ denote a solution of \eqref{CCDF}
with turning number $\omega>0$, and $\tau\in[0,T)$ denote its time variable.

\subsection{Length-preserving normalisation}

We first define a length-normalisation $\gamma(\cdot,t)$ of the solution $\Gamma(\cdot,\tau)$ to \eqref{CCDF}.
Fix the reference length
\[
L_0:=2\pi\omega.
\]
Let $\gamma_\omega$ denote the unit $\omega$-circle (of radius $1$ and centred at the origin) parametrised by arclength on $\R/(2\pi\omega\mathbb Z)$.

\begin{lem}\label{lem:length_preserving_normalisation}
Let
\[
\gamma(\cdot,t):=\sigma(\tau)^{-1}\Gamma(\cdot,\tau),
\quad
\sigma(\tau):=\frac{L(\Gamma(\cdot,\tau))}{L_0},
\quad
t=t(\tau):=\int_0^\tau\sigma(\tau')^{-4}\,d\tau'.
\]
Then the family of immersions $\gamma$ satisfies the length-normalised critical curve diffusion flow
\begin{equation}\label{L-CCDF}\tag{L-CCDF}
    \begin{split}
        \partial_t\gamma
        &=
        -(k_{ss}+ck^3)N+\lambda(t)\gamma,\\
        \lambda(t)&:=\frac{1}{L_0}\left(\int_\gamma k_s^2\,ds-c\int_\gamma k^4\,ds\right),
    \end{split}
\end{equation}
and
\[
L(\gamma(\cdot,t))\equiv L_0.
\]
In particular, the unit $\omega$-circle $\gamma_\omega$ is stationary for \eqref{L-CCDF}.
\end{lem}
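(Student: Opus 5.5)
The plan is a direct computation with the chain rule, once we know how length evolves under \eqref{CCDF}. First, for a normal flow $\partial_\tau\Gamma=VN$ with inward normal $N$ and $k=\IP{\Gamma_{ss}}{N}$, the first variation of length gives $\frac{d}{d\tau}L=-\int_\Gamma kV\,ds$. Taking $V=-(k_{ss}+ck^3)$ and integrating by parts on the closed curve yields
\[
\frac{d}{d\tau}L(\Gamma)=\int_\Gamma kk_{ss}+ck^4\,ds=-\int_\Gamma k_s^2\,ds+c\int_\Gamma k^4\,ds .
\]
Hence $\sigma'(\tau)=\frac{1}{L_0}\bigl(-\int_\Gamma k_s^2+c\int_\Gamma k^4\bigr)$.

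Next we record how the geometric quantities scale. For $\gamma=\sigma^{-1}\Gamma$ we have $ds_\gamma=\sigma^{-1}ds_\Gamma$, $k_\gamma=\sigma k_\Gamma$, $\partial_{s_\gamma}=\sigma\partial_{s_\Gamma}$, and $N$ is unchanged. Therefore
\[
(k_{ss}+ck^3)_\gamma=\sigma^3(k_{ss}+ck^3)_\Gamma,
\]
while
\[
\int_\gamma k_s^2\,ds=\sigma^3\int_\Gamma k_s^2\,ds,\qquad \int_\gamma k^4\,ds=\sigma^3\int_\Gamma k^4\,ds .
\]
Consequently $\sigma'(\tau)=-\sigma^{-3}\lambda(t)$, with $\lambda$ defined in terms of $\gamma$ as in \eqref{L-CCDF}.

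We then differentiate, using $dt/d\tau=\sigma^{-4}$:
\[
\partial_t\gamma=\sigma^4\partial_\tau(\sigma^{-1}\Gamma)=\sigma^3\partial_\tau\Gamma-\sigma^2\sigma'\Gamma=-\sigma^3(k_{ss}+ck^3)_\Gamma N+\sigma^{-1}\lambda\Gamma .
\]
By the scaling identities this equals $-(k_{ss}+ck^3)_\gamma N+\lambda\gamma$, which is \eqref{L-CCDF}. The identity $L(\gamma)=\sigma^{-1}L(\Gamma)=L_0$ holds by the definition of $\sigma$. The time change is a valid reparametrisation since $\sigma>0$ is smooth.

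Finally, for the unit $\omega$-circle we have $k\equiv1$ and $k_s=0$, so $\lambda=-\frac{c}{L_0}\cdot 2\pi\omega=-c$. Since the inward normal is $N=-\gamma_\omega$, the velocity is
\[
-cN+\lambda\gamma_\omega=c\gamma_\omega-c\gamma_\omega=0,
\]
so $\gamma_\omega$ is stationary. There is no real obstacle here. The only places to be careful are the sign conventions (inward normal, the sign of the length variation) and checking that the powers of $\sigma$ cancel exactly; this cancellation is precisely where scale-criticality of the cubic term enters.
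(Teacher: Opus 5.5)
Your proof is correct and follows essentially the same route as the paper: the scaling $(k_{ss}+ck^3)_{\sigma^{-1}\Gamma}=\sigma^3(k_{ss}+ck^3)_\Gamma$, the chain rule with $dt/d\tau=\sigma^{-4}$, and the first variation of length, which identifies $\lambda=-\sigma^3\sigma'$. The differences are minor: you get $L(\gamma)\equiv L_0$ directly from the definition of $\sigma$ instead of re-verifying $\frac{d}{dt}L(\gamma)=0$ along \eqref{L-CCDF}, and you check the stationarity of $\gamma_\omega$ explicitly ($\lambda=-c$, $N=-\gamma_\omega$), which the paper leaves implicit.
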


\begin{proof}
Write the normal speed of \eqref{CCDF} as
\[
V[\Gamma]=-(k_{ss}+ck^3).
\]
Since $V[\rho\Gamma]=\rho^{-3}V[\Gamma]$ under dilations, differentiating
$\gamma=\sigma^{-1}\Gamma$ with respect to $\tau$ and then converting to the new time $t$ (using $dt/d\tau=\sigma^{-4}$) give
\[
\partial_t\gamma
=
-(k_{ss}+ck^3)N-\sigma^3\sigma'\gamma.
\]
Thus \eqref{L-CCDF} holds with
\[
\lambda(t)=-\sigma^3(\tau)\sigma'(\tau).
\]
On the other hand, since $\sigma=L(\Gamma)/L_0$,
\[
\frac{d}{d\tau}L(\Gamma)
=
-\int_\Gamma kV\,ds
=
-\int_\Gamma k_s^2\,ds+c\int_\Gamma k^4\,ds = \sigma^{-3}\left(-\int_\gamma k_s^2\,ds+c\int_\gamma k^4\,ds\right),
\]
which yields
\[
\lambda(t)
=
-\frac{\sigma^3}{L_0}\frac{d}{d\tau}L(\Gamma)
=
\frac{1}{L_0}\left(\int_\gamma k_s^2\,ds-c\int_\gamma k^4\,ds\right).
\]
Finally,
\begin{align}
    \frac{d}{dt}L(\gamma)
    &=
    -\int_\gamma k\Big(-(k_{ss}+ck^3)+\lambda\IP{\gamma}{N}\Big)\,ds
    \\
    &=
    -\int_\gamma k_s^2\,ds+c\int_\gamma k^4\,ds+\lambda\int_\gamma k\IP{\gamma}{N}\,ds.
\end{align}
Using $\int_\gamma k\IP{\gamma}{N}\,ds=-\int_\gamma |\gamma_s|^2\,ds=-L_0$ and the definition of $\lambda$
give $\frac{d}{dt}L\equiv0$.
\end{proof}

From now on we will mainly work with $\gamma$ and often omit the subscript $\gamma$ in the integral notation $\int_\gamma$.

\subsection{Evolution of curvature and its derivatives}

In this subsection we derive the curvature evolution equations for the length-normalised
flow \eqref{L-CCDF}.
The key point is that, although the normal velocity contains the
support function \(h=\IP{\gamma}{N}\), thanks to the special choice of the tangential gauge here, this contribution cancels pointwise in the
curvature evolution and in the commutator \([\partial_t,\partial_s]\).
As a consequence, the evolution of the higher derivatives of curvature has the same schematic form as in the standard Dziuk-Kuwert-Sch\"atzle framework \cite{DKS02}, with an additional zeroth-order term
involving \(\lambda\).

Let
\[
F:=k_{ss}+ck^3,
\qquad
h:=\IP{\gamma}{N},
\qquad T:=\partial_s\gamma, \qquad 
q:=\IP{\gamma}{T}.
\]
Then the length-normalised flow \eqref{L-CCDF} is
equivalent to
\begin{equation}\label{eq:normalised_equivalent_vector_form}
\partial_t\gamma=-FN+\lambda\gamma
=
(-F+\lambda h)N+\lambda qT.
\end{equation}
The flow then satisfies the following evolution laws.

\begin{lem}\label{lem:normalised_curvature_evolution}
Along the flow \eqref{L-CCDF}, the curvature satisfies
\begin{equation}\label{eq:k_t_normalised_compact}
k_t=-(F_{ss}+k^2F)-\lambda k 
=
-k_{ssss}
-(3c+1)k^2k_{ss}
-6ckk_s^2
-ck^5
-\lambda k.
\end{equation}
In addition,
\begin{equation}\label{eq:ds_t_normalised}
\partial_t ds=(\lambda+kF)\,ds,
\end{equation}
and hence the following commutator identity holds:
\begin{equation}\label{eq:commutator_normalised}
[\partial_t,\partial_s]=-(\lambda+kF)\partial_s.
\end{equation}
\end{lem}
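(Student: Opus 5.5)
The plan is to use the standard first-variation formulas for a planar curve moving with a general velocity $\partial_t\gamma=\phi N+\psi T$. Here \eqref{eq:normalised_equivalent_vector_form} gives
\[
\phi=-F+\lambda h,\qquad \psi=\lambda q.
\]
With $k=\IP{\gamma_{ss}}{N}$ and $N$ the inward normal, the Frenet relations read $T_s=kN$ and $N_s=-kT$. We recall the general laws
\[
\partial_t ds=(\psi_s-k\phi)\,ds,\qquad k_t=\phi_{ss}+k^2\phi+\psi k_s.
\]
The sign conventions here must be checked against the paper's formula $\frac{d}{dt}L=-\int k V\,ds$ used in Lemma \ref{lem:length_preserving_normalisation}. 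That formula fixes $\partial_t ds=(\psi_s-k\phi)ds$ for inward $N$. The curvature law then follows by differentiating $\IP{\gamma_{ss}}{N}$ or the tangent angle. I will derive both quickly by computing $\partial_t|\gamma_x|$ and $\partial_t\theta$ with $\theta_s=k$.

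The key step is to compute the support-function quantities, using $\gamma_s=T$.
\begin{itemize}
\item $h_s=\IP{T}{N}+\IP{\gamma}{-kT}=-kq$.
\item $q_s=1+\IP{\gamma}{kN}=1+kh$.
\end{itemize}
From these, $\psi_s-k\phi=\lambda(1+kh)+kF-\lambda kh=\lambda+kF$, which is \eqref{eq:ds_t_normalised}. The $h$-terms cancel pointwise, as announced.

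Next I check that $\lambda(h_{ss}+k^2h)+\lambda q k_s=-\lambda k$. We have $h_{ss}=-(kq)_s=-k_sq-k(1+kh)$, so
\[
h_{ss}+k^2h+qk_s=-k.
\]
Hence
\[
k_t=-(F_{ss}+k^2F)-\lambda k.
\]
Expanding $F_{ss}=k_{ssss}+3ck^2k_{ss}+6ckk_s^2$ and $k^2F=k^2k_{ss}+ck^5$ gives \eqref{eq:k_t_normalised_compact}. The $\gamma$-dependence drops out, consistent with scaling: a dilation by $e^{\lambda t}$ shifts $k$ by $-\lambda k$.

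The commutator follows from $ds=|\gamma_x|dx$ and $\partial_s=|\gamma_x|^{-1}\partial_x$. Since $\partial_t|\gamma_x|=(\lambda+kF)|\gamma_x|$ by \eqref{eq:ds_t_normalised}, we get
\[
\partial_t\partial_s f=-(\lambda+kF)\partial_s f+\partial_s\partial_t f,
\]
which is \eqref{eq:commutator_normalised}.

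The main obstacle is not conceptual but bookkeeping of signs. The normal points inward, $k$ is positive on the counterclockwise circle, and the tangential term must be carried in $k_t$ as the $\psi k_s$ transport term. I will sanity-check all signs on the unit $\omega$-circle. There $F=c$, $h=-1$, $q=0$ and $\lambda=-c$, so $\phi=-c+c=0$ and $k_t=-c\cdot1\cdot1+c=0$, confirming stationarity as required.
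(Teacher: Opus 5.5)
Your proposal is correct and matches the paper's proof: it uses the same decomposition $V=-F+\lambda h$, $\psi=\lambda q$, the same general variation formulas $k_t=V_{ss}+k^2V+\psi k_s$ and $\partial_t ds=(\psi_s-kV)\,ds$, and the same pointwise cancellation $h_{ss}+k^2h+qk_s=-k$ derived from $h_s=-kq$ and $q_s=1+kh$. Your plan to rederive the general formulas via $\partial_t|\gamma_x|$ and $\theta_t$, and the sign check on the unit $\omega$-circle, go slightly beyond what the paper writes out, but the argument is the same.
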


\begin{proof}
Differentiating \(h=\IP{\gamma}{N}\) and \(q=\IP{\gamma}{T}\) in $s$ and using the Frenet-Serret formula $T_s=kN$ and $N_s=-kT$,
we find
\[
h_s=\IP{T}{N}+\IP{\gamma}{N_s}=-kq,
\qquad
q_s=\IP{T}{T}+\IP{\gamma}{T_s}=1+kh.
\]
Differentiating \(h_s=-kq\) once more gives
\begin{equation}
\label{eq:h_cancellation_identity}
h_{ss}=-k_sq-kq_s=-k_sq-k-k^2h\,.
\end{equation}
Now let
\[
V:=-F+\lambda h,
\qquad
\psi:=\lambda q.
\]
For a general velocity \(VN+\psi T\), one has the standard formulas
\[
k_t=V_{ss}+k^2V+\psi k_s,
\qquad
\partial_t ds=(\psi_s-kV)\,ds.
\]
Using \eqref{eq:h_cancellation_identity}, we compute
\begin{align*}
k_t
&=
(-F+\lambda h)_{ss}+k^2(-F+\lambda h)+\lambda qk_s
\\
&=
-(F_{ss}+k^2F)+\lambda(h_{ss}+k^2h+qk_s)
\\
&=
-(F_{ss}+k^2F)-\lambda k.
\end{align*}
This proves the first identity in \eqref{eq:k_t_normalised_compact}, and expanding \(F=k_{ss}+ck^3\) proves the second.

Finally,
\[
\psi_s-kV
=
\lambda q_s-k(-F+\lambda h)
=
\lambda(1+kh)+kF-\lambda kh
=
\lambda+kF,
\]
which proves \eqref{eq:ds_t_normalised}. 
The identity
\eqref{eq:commutator_normalised} is the standard consequence of \(\partial_t ds\).
\end{proof}

Set
\[
f:=k-1, \qquad
e(t):=\int f^2\,ds=\int k^2\,ds-2\pi\omega.
\]
Since $L\equiv2\pi\omega$ along the normalised flow, this $e$ is exactly
$K_{\mathrm{osc}}/(2\pi\omega)$.
A standard interpolation argument as in \cite{DKS02} yields the following criterion for immortality:

\begin{prop}\label{prop:immortal_criterion}
    Let $\gamma:\S\times[0,T)\to\R^2$ be a length-normalised flow \eqref{L-CCDF} with maximal existence time $T\in(0,\infty]$.
    If
    \[
    \sup_{t\in[0,T)}e(t)<\infty,
    \]
    then $T=\infty$.
\end{prop}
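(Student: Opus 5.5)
Assume $\sup_{t\in[0,T)}e(t)=:E<\infty$ and $T<\infty$; we derive a contradiction. The plan is the Dziuk--Kuwert--Sch\"atzle bootstrap: bound all $\|\partial_s^m k\|_{L^2}$ uniformly on $[0,T)$, then use the standard extension argument. Since $L\equiv L_0=2\pi\omega$, the hypothesis gives $\int k^2\,ds\le E+2\pi\omega$ uniformly. The Gagliardo--Nirenberg interpolation inequalities of \cite{DKS02} are scale-invariant on closed curves with $L^2$-curvature control, so they apply here with constants depending only on $L_0$ and $\int k^2\,ds$.

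First we control $\lambda$. Interpolation gives
\[
\int k^4\,ds\le C\|k\|_2^{3}\|k_s\|_2+C\|k\|_2^4 ,
\]
so $|\lambda|\le C(1+\|k_s\|_2^2)$. Next we compute the evolution of $\int(\partial_s^m k)^2\,ds$ using Lemma \ref{lem:normalised_curvature_evolution}. From \eqref{eq:k_t_normalised_compact} and the commutator \eqref{eq:commutator_normalised}, we get
\[
\partial_t \partial_s^m k=-\partial_s^{m+4}k+P_3^{m+2}(k)+P_5^{m}(k)-\lambda\,\partial_s^m k-m\lambda\,\partial_s^m k .
\]
Here $P^{a}_{b}$ denotes the usual DKS polynomials (total derivative order $a$, $b$ factors). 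Combined with $\partial_t ds=(\lambda+kF)\,ds$, this yields
\[
\frac{d}{dt}\int(\partial_s^mk)^2\,ds+2\int(\partial_s^{m+2}k)^2\,ds=\int P_4^{2m+2}(k)+P_6^{2m}(k)\,ds-(2m+1)\lambda\int(\partial_s^mk)^2\,ds .
\]
The point of the special tangential gauge is that the $\lambda$ terms are pure zeroth-order multiples. The polynomial terms are absorbed as in \cite{DKS02}: $\int|P|\le \tfrac12\int(\partial_s^{m+2}k)^2+C(E,L_0)$.

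The main obstacle is the $\lambda$ term. It is not sign-definite for general $c$, and it is as strong as $\|k_s\|_2^2\,\|\partial_s^m k\|_2^2$, i.e.\ of type $P_4^{2m+2}$. We handle it by interpolation. For $m\ge1$, bound
\[
\|k_s\|_2^2\,\|\partial_s^mk\|_2^2\le C\|\partial_s^{m+2}k\|_2^{\alpha}\|k\|_2^{4-\alpha}
\]
with $\alpha=(2+2m)/(m+2)<2$; then Young's inequality absorbs it. For $m=0$ the term is $\lambda\int k^2\le C(E)(1+\|k_s\|_2^2)$, which is lower order. This gives
\[
\frac{d}{dt}\|\partial_s^mk\|_2^2+\|\partial_s^{m+2}k\|_2^2\le C .
\]
Since $\|\partial_s^mk\|_2^2\le \varepsilon\|\partial_s^{m+2}k\|_2^2+C$, we get the differential inequality $y'+y/\varepsilon\le C$. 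Hence $\|\partial_s^mk\|_2$ is bounded on $[0,T)$ by a constant depending on $m$, $E$, $\omega$, $c$ and the initial data, for every $m$.

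With all curvature derivatives bounded, $\lambda$ is bounded, so $|\partial_t\gamma|$ is bounded and $\gamma$ stays in a bounded set. Also $\partial_t\log|\gamma_x|=\lambda+kF$ is bounded, so the parametrisation stays uniformly immersed on the finite interval. Standard arguments then give smooth convergence of $\gamma(\cdot,t)$ to a smooth immersed limit as $t\to T$. Restarting the flow from this limit by short-time existence contradicts the maximality of $T$, so $T=\infty$.
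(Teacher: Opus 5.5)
Your proof is correct and follows the same skeleton as the paper's: the DKS bootstrap for $\int k_{s^m}^2\,ds$, then boundedness of $\lambda$, $F$, $\gamma$ and $\log|\gamma_x|$ on the finite interval, then continuation. The one genuine difference is how you treat the $\lambda$-term.

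The paper observes that only a \emph{lower} bound on $\lambda$ is needed, since $\lambda$ enters the energy identity as $+\frac{2m+1}{2}\lambda\int k_{s^m}^2\,ds$ on the dissipative side. For $c\le0$ one has $\lambda\ge0$ outright. For $c>0$, the only unfavourable part $-\frac{c}{L_0}\int k^4\,ds$ is dominated by the favourable part $\frac{1}{L_0}\int k_s^2\,ds$ via $\int k^4\,ds\le\varepsilon\int k_s^2\,ds+C_\varepsilon$, giving $\lambda\ge-\Lambda_0$. The term then costs only $C\int k_{s^m}^2\,ds$, which is interpolated away.

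You instead use the two-sided bound $|\lambda|\le C(1+\|k_s\|_2^2)$. You then absorb the nonlocal quartic product $\|k_s\|_2^2\|\partial_s^mk\|_2^2$ by interpolation with exponent $\alpha=(2m+2)/(m+2)<2$ on $\|\partial_s^{m+2}k\|_2$. This is valid, even with constant one, by H\"older's inequality on Fourier coefficients. The paper's route is shorter and exploits the sign structure of $\lambda$. Yours ignores the sign and shows that the $\lambda$-term is scaling-subcritical in its own right, so it would still work if the unbounded part of $\lambda$ entered with the unfavourable sign.

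Two routine details deserve a line each. First, because the velocity contains $\lambda\gamma$, bounding $\gamma$ needs a Gr\"onwall step before $|\partial_t\gamma|$ is bounded; as written, your step is slightly circular. Second, the DKS absorption of $\int P_4^{2m+2}(k)\,ds$ is applied only after one integration by parts lowers the top derivative order from $m+2$ to $m+1$, as the paper does explicitly.
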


To prove this we introduce the usual \(P\)-notation for interpolation arguments.
For integers \(a,b,m\ge0\), we denote by
\[
P_b^{a,m}(k)
\]
a linear combination of terms of the form $\prod_{j=1}^b \partial_s^{i_j}k$ such that $i_j\le m$ for all $1\leq j\leq b$ and $\sum_{j=1}^b i_j=a$.
The coefficients in \(P_b^{a,m}(k)\) may depend on the fixed parameter $c\in\R$.

\begin{lem}\label{lem:normalised_higher_curvature_evolution}
For each integer \(m\ge0\), the $m$-th derivative of curvature satisfies 
\begin{equation}\label{eq:ksm_evolution}
\partial_t k_{s^m}
=
-k_{s^{m+4}}
-(m+1)\lambda\,k_{s^m}
+
P_3^{m+2,m+2}(k)
+
P_5^{m,m}(k).
\end{equation}
In addition,
\begin{align}\label{eq:ksm_energy}
\frac{d}{dt}\frac12&\int k_{s^m}^2\,ds
+
\int k_{s^{m+2}}^2\,ds
+
\frac{2m+1}{2}\lambda\int k_{s^m}^2\,ds
\\
\notag
&=
\int P_4^{2m+2,m+2}(k)\,ds
+
\int P_6^{2m,m}(k)\,ds.
\end{align}
\end{lem}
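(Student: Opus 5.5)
The plan is to prove \eqref{eq:ksm_evolution} by induction on $m$, starting from Lemma \ref{lem:normalised_curvature_evolution}, and then to obtain \eqref{eq:ksm_energy} by differentiating under the integral sign and integrating by parts.

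\emph{Base case $m=0$.} Formula \eqref{eq:k_t_normalised_compact} gives
\[
k_t=-k_{ssss}-\lambda k-(3c+1)k^2k_{ss}-6ckk_s^2-ck^5 .
\]
The terms $k^2k_{ss}$ and $kk_s^2$ are of the form $P_3^{2,2}(k)$, and $k^5$ is of the form $P_5^{0,0}(k)$. The coefficient of $k$ is $-\lambda$, which equals $-(0+1)\lambda$.

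\emph{Inductive step.} Assume \eqref{eq:ksm_evolution} holds for $m$. By the commutator identity \eqref{eq:commutator_normalised},
\[
\partial_t k_{s^{m+1}}=\partial_s(\partial_t k_{s^m})-(\lambda+kF)k_{s^{m+1}}.
\]
We handle the terms one at a time.

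\begin{enumerate}
\item Since $\lambda$ depends only on $t$, differentiating the linear part gives $-k_{s^{m+5}}-(m+1)\lambda k_{s^{m+1}}$.
\item The commutator contributes a further $-\lambda k_{s^{m+1}}$. Together with (i) this yields exactly $-(m+2)\lambda k_{s^{m+1}}$.
\item Differentiating $P_3^{m+2,m+2}$ gives $P_3^{m+3,m+3}$. Differentiating $P_5^{m,m}$ gives $P_5^{m+1,m+1}$.
\item The remaining commutator term is $-kFk_{s^{m+1}}=-k\,k_{ss}\,k_{s^{m+1}}-ck^4k_{s^{m+1}}$. The first piece is a $P_3^{m+3,\max(2,m+1)}$ and the second a $P_5^{m+1,m+1}$.
\end{enumerate}

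The point to check in (iv) is that the maximal derivative order in the first piece is at most $m+3$. Since $\max(2,m+1)\le m+3$ for all $m\ge0$, this holds. No $\lambda$ appears inside the $P$-terms, precisely because of the cancellation of $h$ established in Lemma \ref{lem:normalised_curvature_evolution}. This completes the induction.

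\emph{Energy identity.} Using \eqref{eq:ds_t_normalised} we compute
\[
\frac{d}{dt}\frac12\int k_{s^m}^2\,ds=\int k_{s^m}\partial_t k_{s^m}\,ds+\frac12\int k_{s^m}^2(\lambda+kF)\,ds .
\]
We substitute \eqref{eq:ksm_evolution} and treat each resulting term.

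\begin{enumerate}
\item The fourth-order term: integrating by parts twice on the closed curve gives $-\int k_{s^m}k_{s^{m+4}}\,ds=-\int k_{s^{m+2}}^2\,ds$.
\item The $\lambda$-terms combine as $-(m+1)\lambda+\tfrac12\lambda=-\tfrac{2m+1}{2}\lambda$, multiplying $\int k_{s^m}^2\,ds$, which matches the left-hand side of \eqref{eq:ksm_energy}.
\item The term $\frac12 k_{s^m}^2kF$ splits into $\frac12 k_{s^m}^2 k\,k_{ss}$ and $\frac c2 k_{s^m}^2 k^4$. The second is a $P_6^{2m,m}$. The first is a $P_4^{2m+2,\max(m,2)}$, and its order $\max(m,2)$ is at most $m+2$.
\item The term $k_{s^m}P_5^{m,m}$ is a $P_6^{2m,m}$, as required.
\item The term $k_{s^m}P_3^{m+2,m+2}$ is a four-fold product of total order $2m+2$, but individual factors may carry up to $m+2$ derivatives. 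This also fits $P_4^{2m+2,m+2}$.
\end{enumerate}

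In (v), a product of four factors with at most $m+2$ derivatives each is admissible in $P_4^{2m+2,m+2}$. I would still integrate by parts wherever a factor carries more than $m+2$ derivatives; no such factor actually occurs. The main obstacle is therefore only the bookkeeping of the exponents and derivative orders, which is routine once the commutator is free of $h$.
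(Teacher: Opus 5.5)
Your proposal is correct and follows essentially the same route as the paper. You argue by induction on $m$ via the commutator identity: the extra $-\lambda k_{s^{m+1}}$ from $[\partial_t,\partial_s]$ raises the coefficient to $-(m+2)\lambda$, and $kF\,k_{s^{m+1}}$ is absorbed into $P_3^{m+3,m+3}+P_5^{m+1,m+1}$. You then differentiate the energy using $\partial_t ds=(\lambda+kF)\,ds$, integrate the top-order term by parts twice, and combine the two $\lambda$-terms, with derivative-order bookkeeping that matches the paper's (no further integration by parts is needed for $k_{s^m}P_3^{m+2,m+2}$).
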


\begin{proof}
We argue by induction on \(m\).
For \(m=0\), \eqref{eq:ksm_evolution} is just \eqref{eq:k_t_normalised_compact}, namely
\[
k_t
=
-k_{ssss}
-(3c+1)k^2k_{ss}
-6ckk_s^2
-ck^5
-\lambda k,
\]
which has the required schematic form
\[
k_t=-k_{ssss}-\lambda k+P_3^{2,2}(k)+P_5^{0,0}(k).
\]

Assume now that \eqref{eq:ksm_evolution} holds for some \(m\ge0\). Then, using the
commutator formula \eqref{eq:commutator_normalised},
\begin{align*}
\partial_t k_{s^{m+1}}
&=
\partial_t\partial_s(k_{s^m})
=
\partial_s(\partial_t k_{s^m})+[\partial_t,\partial_s]k_{s^m}
\\
&=
\partial_s \big(
-k_{s^{m+4}}
-(m+1)\lambda k_{s^m}
+
P_3^{m+2,m+2}(k)
+
P_5^{m,m}(k)
\big)
\\
&\qquad -(\lambda+kF)k_{s^{m+1}}.
\end{align*}
Since \(\lambda\) depends only on \(t\), differentiating gives
\[
\partial_s\bigl(-(m+1)\lambda k_{s^m}\bigr)=-(m+1)\lambda k_{s^{m+1}}.
\]
Also,
\[
\partial_s P_3^{m+2,m+2}(k)=P_3^{m+3,m+3}(k),
\qquad
\partial_s P_5^{m,m}(k)=P_5^{m+1,m+1}(k).
\]
Finally,
\[
kF = k(k_{ss}+ck^3) = P_2^{2,2}(k)+P_4^{0,0}(k),
\]
and hence
\[
(kF)k_{s^{m+1}}
=
P_3^{m+3,\max\{2,m+1\}}(k)+P_5^{m+1,m+1}(k),
\]
which is absorbed into the same schematic classes as above. Therefore
\[
\partial_t k_{s^{m+1}}
=
-k_{s^{m+5}}
-(m+2)\lambda k_{s^{m+1}}
+
P_3^{m+3,m+3}(k)
+
P_5^{m+1,m+1}(k),
\]
which proves \eqref{eq:ksm_evolution} for \(m+1\).

We now derive \eqref{eq:ksm_energy}. Differentiate the energy:
\[
\frac{d}{dt}\frac12\int k_{s^m}^2\,ds
=
\int k_{s^m}\,\partial_t k_{s^m}\,ds
+
\frac12\int k_{s^m}^2\,\partial_t ds.
\]
Using \eqref{eq:ksm_evolution} and \eqref{eq:ds_t_normalised},
\begin{align*}
\frac{d}{dt}\frac12\int k_{s^m}^2\,ds
&=
-\int k_{s^m}k_{s^{m+4}}\,ds
-(m+1)\lambda\int k_{s^m}^2\,ds
\\
&\qquad
+\int k_{s^m}P_3^{m+2,m+2}(k)\,ds
+
\int k_{s^m}P_5^{m,m}(k)\,ds
\\
&\qquad
+\frac12\lambda\int k_{s^m}^2\,ds
+\frac12\int k_{s^m}^2\,kF\,ds.
\end{align*}
Integrating the first term of the right-hand side by parts twice, combining the two \(\lambda\)-terms, and observing that the remaining terms give the desired $P$-terms, we deduce \eqref{eq:ksm_energy}.
\end{proof}

\begin{lem}\label{lem:uniform_high_derivatives_normalised}
Let \(T\in(0,\infty]\), and let $\gamma:\S\times[0,T)\to\R^2$
be a length-normalised flow \eqref{L-CCDF}. Assume that
\[
\sup_{0\le t<T} e(t)\le E_0<\infty.
\]
Then, for every integer \(m\ge0\), there exists a constant
\[
C_m=C(m,c,\omega,E_0,\gamma_0)<\infty
\]
such that
\[
\sup_{0\le t<T}\int k_{s^m}^2\,ds\le C_m.
\]
\end{lem}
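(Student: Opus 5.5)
We argue by energy estimates based on \eqref{ksm_energy}, in the style of Dziuk--Kuwert--Sch\"atzle \cite{DKS02}. The length is fixed, $L\equiv L_0=2\pi\omega$, and $\int k^2\,ds=e+2\pi\omega\le E_0+2\pi\omega$. Hence the scale-invariant interpolation inequalities on curves of fixed length hold with constants depending only on $\omega$. In particular, the case $m=0$ is immediate. We then treat each fixed $m\ge1$ separately, since the uniform $L^2$ control of $k$ suffices as the base for interpolation at every order.

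\textbf{Step 1: controlling $\lambda$.} By definition,
\[
L_0\lambda=\int k_s^2\,ds-c\int k^4\,ds .
\]
The lower bound is the useful one. Gagliardo--Nirenberg gives
\[
\int k^4\,ds\le C\|k\|_2^{3}\|k_s\|_2+C\|k\|_2^4 ,
\]
so $\lambda\ge -C(E_0,\omega,c)\bigl(1+\|k_s\|_2\bigr)$. The term $\frac{2m+1}{2}\lambda\int k_{s^m}^2\,ds$ therefore has a favourable part, coming from the $\int k_s^2$ contribution, which we simply drop. What remains is
\[
-\frac{(2m+1)c}{2L_0}\int k^4\,ds\int k_{s^m}^2\,ds ,
\]
which we move to the right-hand side when $c>0$. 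This term is a product of integrals rather than a single $P$-term. Still, it scales exactly like $\int P_6^{2m,m}$, and it can be estimated by the same interpolation: bound $\int k^4$ by $\|k\|_{\infty}^2\|k\|_2^2$ and $\|k_{s^m}\|_2^2$ by interpolation between $\|k\|_2$ and $\|k_{s^{m+2}}\|_2$.

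\textbf{Step 2: absorbing the nonlinear terms.} The key estimate is the standard one from \cite[Prop.~2.5]{DKS02}, valid with constants depending on $L$ for closed curves:
\[
\int |P_4^{2m+2,m+2}(k)|+|P_6^{2m,m}(k)|\,ds\le \varepsilon\int k_{s^{m+2}}^2\,ds+C(\varepsilon,m,c,\omega)\Bigl(\int k^2\,ds\Bigr)^{\!\theta}+C .
\]
For the interpolation exponents one checks that the total order of $k_{s^{m+2}}$ appearing is strictly less than $2$; this is the scale-criticality of the cubic term. Applying the same estimate to the $\lambda$-product term of Step~1 and taking $\varepsilon=\tfrac12$, we arrive at
\[
\frac{d}{dt}\int k_{s^m}^2\,ds+\int k_{s^{m+2}}^2\,ds\le C .
\]

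\textbf{Step 3: closing the differential inequality.} Interpolation gives $\int k_{s^m}^2\,ds\le \int k_{s^{m+2}}^2\,ds+C$. Substituting this into the inequality of Step~2 yields
\[
\frac{d}{dt}\int k_{s^m}^2\,ds+\int k_{s^m}^2\,ds\le C .
\]
By ODE comparison,
\[
\int k_{s^m}^2\,ds\le \max\Bigl\{\int k_{s^m}^2\,ds\Big|_{t=0},\,C\Bigr\},
\]
which is uniform in $t\in[0,T)$ whether $T$ is finite or infinite, and depends on $\gamma_0$ only through its initial value.

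\textbf{Main obstacle.} The hard step is the nonlocal $\lambda$ term. For $c>0$ its sign is unfavourable, and it is not literally a $P$-polynomial, so one must verify by hand that it obeys the same scaling-subcritical interpolation bound. The rest is routine bookkeeping of exponents, as in \cite{DKS02}.
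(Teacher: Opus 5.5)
Your proof is correct and follows essentially the paper's approach: the DKS-type energy identity, interpolation to absorb the nonlinear terms into $\int k_{s^{m+2}}^2\,ds$, and an ODE comparison. The $\lambda$-term is less of an obstacle than you suggest: one more Young's inequality on your Step~1 bound gives the paper's uniform lower bound $\lambda\ge-\Lambda_0(c,\omega,E_0)$, after which $\Lambda_0\int k_{s^m}^2\,ds\le\varepsilon\int k_{s^{m+2}}^2\,ds+C_\varepsilon$ (although your direct interpolation of the product, with exponent $\frac{2m+1}{m+2}<2$, is also valid).

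One small caveat: the DKS interpolation estimate is stated only for terms with derivatives of order at most $m+1$. So for the terms of $P_4^{2m+2,m+2}(k)$ containing $k_{s^{m+2}}$, you should either integrate by parts first, $P_4^{2m+2,m+2}(k)=(k_{s^{m+1}}P_3^{m,m}(k))_s+P_4^{2m+2,m+1}(k)$, as the paper does, or put that factor in $L^2$ via Cauchy--Schwarz, which gives the exponent $\frac{2m+3}{m+2}<2$.
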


\begin{proof}
Set
\[
K_0:=\sup_{0\le t<T}\int k^2\,ds
=
2\pi\omega+\sup_{0\le t<T}e(t)
\le 2\pi\omega+E_0.
\]
We first obtain a lower bound on \(\lambda\). If \(c\le0\), then by definition
\[
\lambda(t)=\frac{1}{L_0}\left(\int k_s^2\,ds-c\int k^4\,ds\right)\ge0.
\]
If \(c>0\), then by the one-dimensional Gagliardo-Nirenberg inequality,
\[
\int k^4\,ds
\le
C\|k_s\|_2\|k\|_2^3 + C\|k\|_2^4
\le
\varepsilon \int k_s^2\,ds + C_{\varepsilon}(K_0).
\]
Choosing \(\varepsilon>0\) so small that \(1-c\varepsilon\ge\frac12\), we find
\[
\lambda(t)\ge -\Lambda_0
\qquad\text{for all }t\in[0,T),
\]
where \(\Lambda_0=\Lambda_0(c,\omega,E_0)\).

Fix an arbitrary \(m\ge0\). Let
\[
I_m(t):=\int k_{s^m}^2\,ds,
\qquad
J_m(t):=\int k_{s^{m+2}}^2\,ds.
\]
By Lemma \ref{lem:normalised_higher_curvature_evolution},
\begin{equation}\label{eq:energy_Im_Jm}
\frac12 I_m'(t)+J_m(t)+\frac{2m+1}{2}\lambda(t)I_m(t)
=
\int P_4^{2m+2,m+2}(k)\,ds
+
\int P_6^{2m,m}(k)\,ds.
\end{equation}
Using the lower bound on \(\lambda\), this gives
\begin{equation}\label{eq:energy_Im_Jm_rough}
\frac12 I_m'(t)+J_m(t)
\le
\left|\int P_4^{2m+2,m+2}(k)\,ds\right|
+
\left|\int P_6^{2m,m}(k)\,ds\right|
+
C I_m(t).
\end{equation}
Since $P_4^{2m+2,m+2}(k) = (k_{s^{m+1}} P_3^{m,m}(k))_s + P_4^{2m+2,m+1}(k)$, we have
\begin{equation}\label{eq:P-transform}
    \left|\int P_4^{2m+2,m+2}(k)\,ds\right|
\le \left|\int P_4^{2m+2,m+1}(k)\,ds\right|.
\end{equation}
We now apply the Dziuk-Kuwert-Sch\"atzle interpolation estimates
\cite[(2.16)]{DKS02}. Since \(\int k^2\,ds\le K_0\), for every \(\varepsilon>0\),
\begin{equation}\label{eq:DKS_lower_order_terms}
\left|\int P_4^{2m+2,m+1}(k)\,ds\right|
+
\left|\int P_6^{2m,m}(k)\,ds\right|
\le
\varepsilon J_m(t)+C_{\varepsilon,m},
\end{equation}
where \(C_{\varepsilon,m}=C_{\varepsilon,m}(c,\omega,E_0)\).

Similarly, again by the DKS interpolation inequalities and the uniform \(L^2\)-bound on \(k\),
for every \(\varepsilon>0\),
\begin{equation}\label{eq:DKS_Im_absorb}
I_m(t)\le \varepsilon J_m(t)+C_{\varepsilon,m}.
\end{equation}

Substituting \eqref{eq:DKS_lower_order_terms} with \eqref{eq:P-transform} into \eqref{eq:energy_Im_Jm_rough}, and then
using \eqref{eq:DKS_Im_absorb} to control the \(I_m\)-term, we obtain
\[
\frac12 I_m'(t)+J_m(t)
\le
\varepsilon J_m(t)+C_{\varepsilon,m}+C\bigl(\varepsilon J_m(t)+C_{\varepsilon,m}\bigr).
\]
Choosing \(\varepsilon>0\) sufficiently small, depending only on \(m,c,\omega,E_0\), yields
\begin{equation}\label{eq:Im_Jm_coercive}
I_m'(t)+c_m J_m(t)\le C_m
\qquad\text{for all }t\in[0,T).
\end{equation}
Now, combining \eqref{eq:DKS_Im_absorb} and \eqref{eq:Im_Jm_coercive}, we arrive at
\[
I_m'(t)+\hat c_m I_m(t)\le \hat C_m
\qquad\text{for all }t\in[0,T).
\]
Gr\"onwall's inequality gives $\sup_{0\le t<T} I_m(t)\le C_m$, completing the proof.
\end{proof}

\begin{proof}[Proof of Proposition \ref{prop:immortal_criterion}]
    Suppose on the contrary that $T_{\max}<\infty$.
    From the case \(m=1\) and the Gagliardo-Nirenberg inequality in the proof of Lemma \ref{lem:uniform_high_derivatives_normalised}, we obtain a
    uniform bound on \(\int k^4\,ds\), and hence \(\lambda\) is bounded on \([0,T_{\max})\). Since the
    curvature derivatives are uniformly bounded by Lemma \ref{lem:uniform_high_derivatives_normalised}, the quantity
    $F=k_{ss}+ck^3$
    is uniformly bounded on \([0,T)\) as well.
    Using the expression $\partial_t\gamma=-FN+\lambda\gamma$
    from Lemma \ref{lem:normalised_curvature_evolution}, we obtain
    \[
    |\gamma(\cdot,t)|
    \le |\gamma(\cdot,0)| + \int_0^t(|F|+|\lambda||\gamma|)\,dt' \leq C + C \int_0^t(1+|\gamma|)dt'.
    \]
    Gr\"onwall's inequality yields
    \[
    \sup_{0\le t<T_{\max}}\sup_{\S}|\gamma(\cdot,t)|<\infty.
    \]
    Thus both the curve and all curvature derivatives remain bounded on \([0,T_{\max})\).
    
    In addition, the parametrisation speed $v:=|\partial_x\gamma|$ is also well controlled so that $0<C^{-1}\leq v \leq C<\infty$ on the finite time interval $[0,T_{\max})$ since $\partial_tv=(\lambda+kF)v$ and $|\lambda+kF|$ is uniformly bounded.
    
    Therefore, as in \cite{DKS02}, standard continuation for fourth-order quasilinear geometric flows then implies a contradiction.
    Hence \(T_{\max}=\infty\).
\end{proof}

\subsection{Decay estimate for curvature oscillation}

Now we study the evolution of $e=K_{\mathrm{osc}}/(2\pi\omega)$, which is the main part of our analysis.
Hereafter we use the notation
\[
C=C(c,\omega)>0
\]
for a constant which depends only on $c,\omega$ and may change line by line. 



\begin{prop}\label{prop:quadratic_expansion_general_c}
Along the flow \eqref{L-CCDF},
\begin{equation}\label{eq:e_expansion_general_c}
\frac{d}{dt}e(t)
=
-Q_{c,\omega}[f]+R_{c,\omega}[f],
\end{equation}
where the quadratic term is given by
\begin{equation}\label{eq:quadratic_form}
    Q_{c,\omega}[f]
    :=
    2\int f_{ss}^2\,ds-(6c+2)\int f_s^2\,ds+8c\int f^2\,ds,
\end{equation}
and the remainder term is given by
\begin{align*}
R_{c,\omega}[f]
={}&
-(6c+3)\int f^2f_{ss}\,ds
-(2c+1)\int f^3f_{ss}\,ds
-16c\int f^3\,ds
-14c\int f^4\,ds
\\
&\quad
-6c\int f^5\,ds
-c\int f^6\,ds
-\frac{e}{2\pi\omega}\int f_s^2\,ds
+\frac{6c}{2\pi\omega}e^2
\\
&\quad
+\frac{4ce}{2\pi\omega}\int f^3\,ds
+\frac{ce}{2\pi\omega}\int f^4\,ds.
\end{align*}
Moreover, if $e(t)\le1$, then
\begin{equation}\label{eq:remainder_bound_general_c}
|R_{c,\omega}[f]|
\le
C\,\sqrt{e(t)}\left(\int f_{ss}^2\,ds+e(t)\right).
\end{equation}
\end{prop}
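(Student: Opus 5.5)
The identity \eqref{eq:e_expansion_general_c} comes from differentiating $e=\int k^2\,ds-2\pi\omega$ directly using Lemma \ref{lem:normalised_curvature_evolution}. The estimate \eqref{eq:remainder_bound_general_c} then follows from one-dimensional interpolation on a curve of fixed length $2\pi\omega$.

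\emph{Step 1: the exact formula.} By \eqref{eq:k_t_normalised_compact} and \eqref{eq:ds_t_normalised},
\[
\frac{d}{dt}\int k^2\,ds=\int\bigl(2kk_t+k^2(\lambda+kF)\bigr)\,ds
=-2\int k(F_{ss}+k^2F)\,ds+\int k^3F\,ds-\lambda\int k^2\,ds.
\]
Integrating by parts gives $-2\int kF_{ss}=-2\int k_{ss}F$. Hence
\[
e'=-2\int F(k_{ss}+k^3)\,ds+\int k^3F\,ds-\lambda\int k^2\,ds
=-2\int k_{ss}^2-(2c+1)\int k^3k_{ss}-c\int k^6-\lambda\int k^2.
\]
Here $\int k^3k_{ss}=-3\int k^2k_s^2$, and $\lambda$ is given in \eqref{L-CCDF}. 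Next substitute $k=1+f$, $k_s=f_s$, $k_{ss}=f_{ss}$, and use the constraints $\int f\,ds=0$ (turning number, $\int k=2\pi\omega$) and $\int k^2\,ds=2\pi\omega+e$, i.e.\ $\int k^2=L_0+e$. Expand every term in powers of $f$. The linear terms vanish by $\int f=0$, and $\int f_{ss}=0$. After dividing the $\lambda$-term as $\lambda(L_0+e)$, the quadratic part collects into $-(2\int f_{ss}^2-(6c+2)\int f_s^2+8c\int f^2)$.

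For the $f^2$ coefficient $8c$: it combines $-c\binom62=-15c$ from $-c\int k^6$ with $+c\cdot\binom42=6c\cdot$(from $\lambda L_0$, which contributes $+c\int k^4$) and the $f_s^2$ reorganisations. Whatever is left over at cubic and higher order, together with the $e/L_0$ corrections coming from $\lambda e$, is $R_{c,\omega}$. It is convenient to rewrite terms like $\int ff_s^2$ as $-\frac12\int f^2f_{ss}$ to match the stated form. This step is pure bookkeeping; I would verify the coefficients by checking the circle ($f=0$) and a single Fourier mode at small amplitude.

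\emph{Step 2: the remainder bound.} Assume $e\le1$. Since $\int f=0$ on a curve of length $L_0$, Wirtinger and Agmon/Sobolev give
\[
\|f\|_\infty^2\le C\|f\|_2\|f_s\|_2,\qquad \int f_s^2\le \|f\|_2\|f_{ss}\|_2.
\]
Hence
\[
\|f\|_\infty\le C\|f\|_2^{3/4}\|f_{ss}\|_2^{1/4}\le C\sqrt e^{\,1/2}\bigl(\|f_{ss}\|_2+\sqrt e\bigr)^{1/2}.
\]
Each term of $R$ is then estimated against $X:=\int f_{ss}^2+e$.
\begin{itemize}
\item $\int f^2f_{ss}\le\|f\|_\infty\|f\|_2\|f_{ss}\|_2$. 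Since $\|f\|_\infty\le C(\sqrt e)^{3/4}X^{1/8}$, this is at most $Ce^{5/4}X^{5/8}\le C\sqrt e\,X$, using $e\le X$.
\item The term $\int f^3f_{ss}$ is handled the same way, with one more factor $\|f\|_\infty\le CX^{1/2}$.
\item $\int|f|^p$ for $p=3,\dots,6$ is bounded by $\|f\|_\infty^{p-2}e$.
\item The terms $e\int f_s^2$, $e^2$, $e\int f^3$ and $e\int f^4$ are bounded by $e\cdot X$ times $\sqrt e$-powers.
\end{itemize}

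The only delicate point is making sure that every term gains at least a factor $\sqrt e$ while remaining controlled by $X$ rather than by higher powers of $\int f_{ss}^2$. For example, $\int|f|^6\le\|f\|_\infty^4e\le Ce^{2}X$ works because of the Gagliardo--Nirenberg exponents above together with $e\le1$. The main obstacle is the error-prone coefficient bookkeeping in Step 1, not the estimates.
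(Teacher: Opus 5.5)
Your plan is the paper's proof. You differentiate $\int k^2\,ds$ via Lemma~\ref{lem:normalised_curvature_evolution}, reduce to $-2\int k_{ss}^2-(2c+1)\int k^3k_{ss}-c\int k^6-\lambda\int k^2$, substitute $k=1+f$ using $\int f\,ds=0$, and bound the remainder by interpolation. Your mean-zero bound $\|f\|_\infty\le Ce^{3/8}\|f_{ss}\|_2^{1/4}$ is even slightly cleaner than the paper's, which carries an extra $+e^{1/2}$. Two steps, however, are wrong as written and need repair.

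\textbf{The $f^2$-coefficient.} You have $-15c+6c=-9c$, and the missing $+c$ does not come from any $f_s^2$ reorganisation. The identity $\int ff_{ss}=-\int f_s^2$ only feeds the $f_s^2$-coefficient $3(2c+1)-1=6c+2$. The $+c$ comes from $-\lambda e$. Write $-\lambda e=\frac{e}{2\pi\omega}\bigl(-\int f_s^2+c\int k^4\bigr)$ and expand $\int k^4=2\pi\omega+6e+4\int f^3+\int f^4$. The leading piece $ce$ is quadratic and belongs to $Q_{c,\omega}$. Only the pieces with an explicit factor $e/(2\pi\omega)$ go into $R_{c,\omega}$.

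\textbf{The term $\int f^3f_{ss}$.} You cannot take ``one more factor $\|f\|_\infty\le CX^{1/2}$''. That yields $C\sqrt e\,X^{3/2}$, and only $e\le1$ is assumed, so $X=\int f_{ss}^2+e$ may be arbitrarily large. This is exactly the pitfall you flag at the end. Use your interpolation bound for $\|f\|_\infty^2$ instead:
\[
\Bigl|\int f^3f_{ss}\,ds\Bigr|=3\int f^2f_s^2\,ds\le 3\|f\|_\infty^2\|f\|_2\|f_{ss}\|_2\le Ce^{3/4}X^{1/4}\cdot e^{1/2}X^{1/2}=Ce^{5/4}X^{3/4}\le C\sqrt e\,X,
\]
where the last step uses $e^3\le e\le X$.

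Similarly, your bound for $\int f^2f_{ss}$ should read $Ce^{7/8}X^{5/8}$, not $Ce^{5/4}X^{5/8}$. It still gives $C\sqrt e\,X$, since $e\le X$.

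With these corrections your argument coincides with the paper's.
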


\begin{proof}
Using
\eqref{eq:k_t_normalised_compact} and \eqref{eq:ds_t_normalised}, we compute
\begin{align}
    \frac{d}{dt}e(t) &= \frac{d}{dt}\int k^2\,ds = \int 2k\big(-(F_{ss}+k^2F)-\lambda k\big)\,ds + \int k^2(\lambda+kF)\,ds\\
    &= -\int (2kF_{ss} + k^3 F) - \lambda\int k^2\,ds = -\int (2k_{ss} + k^3)F - \lambda(e+2\pi\omega). \label{eq:260317_01}
\end{align}
Since $L\equiv2\pi\omega$, we have $\int f\,ds=0$ and
\[
\lambda
=
-c+\frac{1}{2\pi\omega}\int f_s^2\,ds
-\frac{6c}{2\pi\omega}e
-\frac{4c}{2\pi\omega}\int f^3\,ds
-\frac{c}{2\pi\omega}\int f^4\,ds.
\]
Substituting $F=k_{ss}+ck^3$ and $k=1+f$ into \eqref{eq:260317_01}, expanding, and using $\int ff_{ss}\,ds=-\int f_s^2\,ds$ give
\eqref{eq:e_expansion_general_c} with the desired $Q_{c,\omega}[f]$ and $R_{c,\omega}[f]$.

To estimate the remainder $R_{c,\omega}[f]$ we use
\[
\|f_s\|_2^2=-\int ff_{ss}\,ds\le e^{1/2}\|f_{ss}\|_2,
\]
and the one-dimensional Gagliardo-Nirenberg inequality for $f$ with $\int f\,ds=0$:
\begin{equation}\label{eq:1D_GN}
    \|f\|_\infty
\le
C\big(e^{3/8}\|f_{ss}\|_2^{1/4}+e^{1/2}\big).
\end{equation}
These with Young's inequality imply
\[
\left|\int f^2f_{ss}\,ds\right|
=
2\left|\int ff_s^2\,ds\right|
\le
2\|f\|_\infty \|f_s\|_2^2
\le
C\sqrt e\big(\|f_{ss}\|_2^2+e\big),
\]
and similarly, also using $e\leq1$,
\[
\left|\int f^3f_{ss}\,ds\right|
=
3\left|\int f^2f_s^2\,ds\right|
\le
C\|f\|_\infty^2\|f_s\|_2^2 
\le
C\sqrt e\big(\|f_{ss}\|_2^2+e\big).
\]
For the pure-power terms with $m\in\{3,4,5,6\}$ we use
\[
\int |f|^m\,ds\le \|f\|_\infty^{m-2} e.
\]
By $e\le1$ and \eqref{eq:1D_GN}, repeated applications of Young's inequality give
\[
\int |f|^m\,ds
\le
C\sqrt e\big(\|f_{ss}\|_2^2+e\big),
\qquad m=3,4,5,6.
\]
The terms multiplied by $e$ are then of higher order, and \eqref{eq:remainder_bound_general_c} follows.
\end{proof}

Now we discuss more detailed estimates using
\[
f(s)=\sum_{n\in\mathbb Z}a_n e^{ins/\omega}, \qquad a_n:=\frac{1}{2\pi\omega}\int_0^{2\pi\omega}f(s)e^{-ins/\omega}ds,
\]
the Fourier series of $f$ with respect to the arclength parameter $s$ on $\R/(2\pi\omega)\mathbb Z$.
Then the Parseval identity yields
\[
e(t)=2\pi\omega\sum_{n\in\mathbb Z\setminus\{0\}}|a_n|^2,
\]
and also
\[
Q_{c,\omega}[f]
=
2\pi\omega\sum_{n\in\mathbb Z\setminus\{0\}}
p_c\Big(\frac{n}{\omega}\Big)\,|a_n|^2,
\qquad
p_c(x):=2x^4-(6c+2)x^2+8c.
\]
Recall that the polynomial $p_c$ is used in the definition of $\hat{\lambda}_{c,\omega}$, see \eqref{eq:def_lambda_hat}.
To relate $\hat{\lambda}_{c,\omega}$ with $Q_{c,\omega}$ more directly, we observe that the energy of the translation modes
\[
E_{\mathrm{tr}}(t):=2\pi\omega\big(|a_\omega(t)|^2+|a_{-\omega}(t)|^2\big)
\]
satisfies the following estimate.

\begin{lem}\label{lem:first_harmonics_general_c}
The following estimate holds:
\[
E_{\mathrm{tr}}(t) \le C e(t)^2.
\]
\end{lem}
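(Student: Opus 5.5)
The estimate comes from the closing condition of the curve. Quadratic smallness of the translation modes is the nonlinear shadow of the linear fact that the closedness constraint kills the first harmonic. I would argue at a fixed time $t$, using the arclength parametrisation $s\in\R/(2\pi\omega\mathbb Z)$; this is legitimate since $L\equiv 2\pi\omega$ along \eqref{L-CCDF}.

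\textbf{Step 1: write the tangent angle.} Let $\theta$ be the tangential angle, so $\theta_s=k=1+f$. Since $\int f\,ds=0$, the function $\varphi(s):=\theta(s)-s$ is $2\pi\omega$-periodic with $\varphi_s=f$. After a rotation of the curve, which changes neither $k$ nor the Fourier coefficients of $f$, we may add a constant to $\theta$ and assume $\int\varphi\,ds=0$. Then
\[
\varphi(s)=\sum_{n\ne0}\frac{\omega a_n}{in}\,e^{ins/\omega}.
\]
The Poincaré inequality on $\R/(2\pi\omega\mathbb Z)$ gives $\int\varphi^2\,ds\le \omega^2\int f^2\,ds=\omega^2 e$.

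\textbf{Step 2: use closedness.} Since $\gamma$ is closed, $\int_0^{2\pi\omega}e^{i\theta}\,ds=0$, that is,
\[
\int e^{is}e^{i\varphi}\,ds=0.
\]
Using also $\int e^{is}\,ds=0$, we get
\[
i\int e^{is}\varphi\,ds=-\int e^{is}\big(e^{i\varphi}-1-i\varphi\big)\,ds.
\]
With the pointwise bound $|e^{ix}-1-ix|\le x^2/2$ for real $x$, this yields
\[
\Big|\int e^{is}\varphi\,ds\Big|\le \tfrac12\int\varphi^2\,ds\le \tfrac{\omega^2}{2}\,e.
\]

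\textbf{Step 3: identify the first harmonic.} Since $e^{is}=e^{i\omega s/\omega}$, orthogonality picks out the $n=-\omega$ term of $\varphi$:
\[
\int e^{is}\varphi\,ds=2\pi\omega\cdot\frac{\omega a_{-\omega}}{-i\omega}=2\pi i\omega\,a_{-\omega}.
\]
Hence $|a_{-\omega}|\le C e$. Because $f$ is real, $a_\omega=\overline{a_{-\omega}}$, so the same bound holds for $a_\omega$. Equivalently, one can use the conjugate relation $\int e^{-is}e^{i\varphi}\,ds=0$. Therefore
\[
E_{\mathrm{tr}}=2\pi\omega\big(|a_\omega|^2+|a_{-\omega}|^2\big)\le Ce^2 .
\]

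\textbf{Expected obstacle.} The only delicate point is bookkeeping: matching the frequency of $e^{is}$ with the index $n=\pm\omega$ in the convention $e^{ins/\omega}$, and justifying that the constant in $\varphi$ can be removed by a rotation. Note that no smallness of $e$ is needed, because the Taylor bound $|e^{ix}-1-ix|\le x^2/2$ is global.
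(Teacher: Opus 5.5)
Your proof is correct: the Fourier bookkeeping, the identity $\int e^{is}\varphi\,ds=2\pi i\omega\,a_{-\omega}$, the Poincar\'e constant $\omega^2$ and the global Taylor bound all check out. It takes a somewhat different route from the paper, although the two are closely related.

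The paper never lifts the tangent angle. It uses $\int (k-1)N\,ds=0$, which follows from $\int kN\,ds=\int T_s\,ds=0$ together with closedness $\int N\,ds=0$. This lets it rewrite $\int f\,(\cos s,\sin s)\,ds=\int f\,\big((\cos s,\sin s)-N\big)\,ds$. It then shows $\sup_s|N(s)-(\cos s,\sin s)|\le\int|k-1|\,ds$ by following the rotated normal $w=R_{-s}N$, after normalising $N(0)=(1,0)$. This gives $|\int f e^{is}\,ds|\le(\int|f|\,ds)^2\le 2\pi\omega\,e$, and hence $E_{\mathrm{tr}}\le 4\pi\omega\,e^2$.

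You instead expand the closing condition $\int e^{is}e^{i\varphi}\,ds=0$ to second order. The linear term is then exactly the $n=-\omega$ mode of $\varphi$, and Poincar\'e controls the quadratic remainder. The two arguments are linked by the integration by parts $\int f e^{is}\,ds=-i\int\varphi\, e^{is}\,ds$. The paper gets one order of smallness from $f$ itself and the other from the Lipschitz bound on $N$. You put both orders into the Taylor remainder of $e^{i\varphi}$.

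Your version makes it transparent that the linearised closing condition is precisely the vanishing of the $\pm\omega$ modes. The price is a lift of $\theta$, a Fourier inversion and Poincar\'e, and a constant $\omega^3/(4\pi)$ instead of $4\pi\omega$. That constant is harmless here since $C=C(c,\omega)$. The paper's version is slightly more elementary, needing only an $L^1$--$L^\infty$ pairing. Neither argument needs $e$ to be small.
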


\begin{proof}
Set
\[
I_1:=\int_0^{2\pi\omega} f(s)\cos s\,ds,
\qquad
I_2:=\int_0^{2\pi\omega} f(s)\sin s\,ds.
\]
After a rigid motion and a shift of the arclength origin, we may assume
\[
N(0)=(1,0)=(\cos0,\sin0).
\]
Let $R_\theta$ denote rotation by angle $\theta$, and let $J:=R_{\pi/2}$, so that
$N=JT$. Define
\[
w(s):=R_{-s}N(s).
\]
Differentiating and using $N_s=kJN$ gives $w_s=(k-1)Jw$ and hence $|w_s(s)|=|k(s)-1|$.
Therefore,
\[
|w(s)-(1,0)|
\le \int_0^s|w_s(\sigma)|\,d\sigma \leq
\int_0^{2\pi\omega}|k(\sigma)-1|\,d\sigma.
\]
Because $
N(s)=R_sw(s)$ and $(\cos s,\sin s)=R_s(1,0)$,
it follows that
\[
\sup_{s\in[0,2\pi\omega]}|N(s)-(\cos s,\sin s)|
\le
\int_0^{2\pi\omega}|k-1|\,ds.
\]
Now, since $\int (k-1)N\,ds
=
\int kN\,ds-\int N\,ds
=
0$,
we have
\begin{align*}
|I_1|^2+|I_2|^2
&=
\left|\int (k-1)(\cos s,\sin s)\,ds\right|^2
=
\left|\int (k-1)\Big((\cos s,\sin s)-N(s)\Big)\,ds\right|^2
\\
&\le
\left(\int |k-1|\,ds\right)^2
\sup_s |N(s)-(\cos s,\sin s)|^2
\le
(2\pi\omega)^2 e(t)^2.
\end{align*}

Finally, $\cos s=\frac12(e^{i\omega s/\omega}+e^{-i\omega s/\omega})$ and $\sin s=\frac1{2i}(e^{i\omega s/\omega}-e^{-i\omega s/\omega})$ yield
\[
I_1=\pi\omega(a_{-\omega}+a_{\omega}),
\qquad
I_2=\frac{\pi\omega}{i}(a_{-\omega}-a_\omega).
\]
Hence $|a_\omega|^2+|a_{-\omega}|^2
=
\frac{1}{2\pi^2\omega^2}(|I_1|^2+|I_2|^2
) \le
2 e(t)^2$,
yielding the desired estimate for $E_{\mathrm{tr}}$.
\end{proof}

Now we obtain a key spectral estimate for $Q_{c,\omega}$ in terms of $\hat{\lambda}_{c,\omega}$ in \eqref{eq:def_lambda_hat}.

\begin{lem}\label{lem:spectral_gap_general_c}
The following estimate holds:
\[
Q_{c,\omega}[f]
\ge
\hat\lambda_{c,\omega}e(t)-Ce(t)^2.
\]
\end{lem}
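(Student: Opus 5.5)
The plan is to split the Fourier sum for $Q_{c,\omega}[f]$ into the translation modes $n=\pm\omega$ and all remaining nonzero modes, bound the latter below by $\hat\lambda_{c,\omega}$, and control the former via Lemma \ref{lem:first_harmonics_general_c}. The Fourier expression for $Q_{c,\omega}$ was derived just above, and the mean of $f$ vanishes: $a_0=0$ because $\int f\,ds=\int k\,ds-L=2\pi\omega-2\pi\omega=0$. Hence
\[
Q_{c,\omega}[f]=2\pi\omega\sum_{n\in\mathbb Z\setminus\{0,\pm\omega\}}p_c\Big(\frac n\omega\Big)|a_n|^2+2\pi\omega\,p_c(\pm1)\big(|a_\omega|^2+|a_{-\omega}|^2\big).
\]

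For the first sum, every index satisfies $p_c(n/\omega)\ge\hat\lambda_{c,\omega}$ by definition \eqref{eq:def_lambda_hat}. The minimum there is attained and finite, since $p_c(x)\to\infty$ as $|x|\to\infty$. This gives the lower bound $\hat\lambda_{c,\omega}\,(e(t)-E_{\mathrm{tr}}(t))$, because by Parseval the sum of $2\pi\omega|a_n|^2$ over $n\ne0,\pm\omega$ equals $e-E_{\mathrm{tr}}$.

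For the translation modes, direct evaluation gives $p_c(1)=2-(6c+2)+8c=2c$. Thus the second term equals $2c\,E_{\mathrm{tr}}$, which can have either sign.

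Combining the two pieces,
\[
Q_{c,\omega}[f]\ge\hat\lambda_{c,\omega}e-(\hat\lambda_{c,\omega}-2c)E_{\mathrm{tr}}\ge\hat\lambda_{c,\omega}e-|\hat\lambda_{c,\omega}-2c|\,E_{\mathrm{tr}}.
\]
Lemma \ref{lem:first_harmonics_general_c} then gives $E_{\mathrm{tr}}\le Ce^2$, and $|\hat\lambda_{c,\omega}-2c|$ depends only on $c,\omega$, so the claim follows.

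The only point needing care is the bookkeeping. One must use $a_0=0$, confirm that no assumption on the sign of $\hat\lambda_{c,\omega}$ is needed (the inequality holds termwise in either case), and check that the translation-mode coefficient $2c$ is absorbed correctly into the error term regardless of the sign of $c$. There is no real obstacle; the substantive input is Lemma \ref{lem:first_harmonics_general_c}.
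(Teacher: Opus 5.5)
Your proposal is correct and follows essentially the same route as the paper: split the Parseval expansion of $Q_{c,\omega}[f]$ into the translation modes $n=\pm\omega$ (where $p_c(\pm1)=2c$) and the remaining modes (bounded below by $\hat\lambda_{c,\omega}$), giving $Q_{c,\omega}[f]\ge\hat\lambda_{c,\omega}e+(2c-\hat\lambda_{c,\omega})E_{\mathrm{tr}}$. Lemma \ref{lem:first_harmonics_general_c} then supplies $E_{\mathrm{tr}}\le Ce^2$, which finishes the proof.
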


\begin{proof}
The Fourier expansions of $e$ and $Q_{c,\omega}$ and the definition of $\hat{\lambda}_{c,\omega}$ yield
\[
Q_{c,\omega}[f]
\ge
\hat\lambda_{c,\omega}(e-E_{\mathrm{tr}})+p_c(1)E_{\mathrm{tr}}
=
\hat\lambda_{c,\omega}e+(2c-\hat\lambda_{c,\omega})E_{\mathrm{tr}}.
\]
By Lemma \ref{lem:first_harmonics_general_c}, the desired estimate follows.
\end{proof}

Finally we prepare a simple differential inequality estimate.

\begin{lem}\label{lem:ode_general_c}
Let $y:[0,T)\to[0,\infty)$ be $C^1$ and suppose, for $\alpha>0$ and $A>0$,
\[
y'(t)\le -\alpha y(t)+A y(t)^{3/2}.
\]
Then there exists $\varepsilon_0=\varepsilon_0(\alpha,A)\in(0,\frac{1}{2}]$ such that
$y(0)\le\varepsilon_0$ implies
\[
y(t)\le 2y(0)e^{-\alpha t}
\qquad\text{for all }t\ge0.
\]
\end{lem}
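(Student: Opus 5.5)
The plan is a continuity (bootstrap) argument on the set where $y$ stays small. We choose $\varepsilon_0$ so that the nonlinear term is absorbed into half of the linear decay. Concretely, set
\[
\varepsilon_0:=\min\Big\{\tfrac12,\ \tfrac{\alpha^2}{16A^2}\Big\},
\]
so that $A\sqrt{2\varepsilon_0}\le\frac{\alpha}{2\sqrt2}<\frac{\alpha}{2}$. We may assume $y(0)>0$. If $y(0)=0$, the same argument still applies, or we simply note that $y\equiv0$ is forced: $y'\le Ay^{3/2}$ with $y\ge0$, and $y^{3/2}$ is locally Lipschitz near $0$ in the comparison sense, so uniqueness for the comparison ODE gives $y\equiv 0$.

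First, consider
\[
t^*:=\sup\{t\in[0,T):\ y\le 2y(0)\ \text{on}\ [0,t]\}.
\]
By continuity $t^*>0$, since $y(0)<2y(0)$. On $[0,t^*)$ we have $y\le 2\varepsilon_0$, hence $Ay^{3/2}\le A\sqrt{2\varepsilon_0}\,y\le\frac{\alpha}{2}y$. The inequality therefore becomes
\[
y'\le-\tfrac{\alpha}{2}y\le 0 .
\]
This only yields decay rate $\alpha/2$, which is weaker than the claimed $e^{-\alpha t}$. That is the main obstacle: the statement asks for the full rate $\alpha$ with constant $2$.

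To recover the full rate, we use the improved smallness. Since $y'\le-\frac{\alpha}{2}y$, we get $y(t)\le y(0)e^{-\alpha t/2}$, so $y$ stays below $y(0)<2y(0)$ and hence $t^*=T$. Next, put $z(t):=e^{\alpha t}y(t)$. Then
\[
z'\le Ae^{\alpha t}y^{3/2}=A\,y^{1/2}z\le A\sqrt{y(0)}\,e^{-\alpha t/4}z .
\]
Grönwall's inequality gives
\[
z(t)\le y(0)\exp\Big(\tfrac{4A\sqrt{y(0)}}{\alpha}\Big).
\]
With our choice of $\varepsilon_0$, the exponent satisfies $\frac{4A\sqrt{\varepsilon_0}}{\alpha}\le 1$. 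Since $e>2$, this is not quite enough, so we shrink further to
\[
\varepsilon_0:=\min\Big\{\tfrac12,\ \big(\tfrac{\alpha\log 2}{4A}\big)^2,\ \tfrac{\alpha^2}{16A^2}\Big\}.
\]
Then the exponential factor is at most $2$, which yields $y(t)\le 2y(0)e^{-\alpha t}$ for all $t\in[0,T)$.

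The only delicate points are making the bootstrap rigorous and handling the case $y(0)=0$ via the first step. Both are routine, and the constants depend only on $\alpha$ and $A$, as required.
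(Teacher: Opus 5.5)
Your proof is correct, but it takes a different route from the paper's.

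The paper linearises the inequality with the Bernoulli substitution $v:=y^{-1/2}$. It first observes that once $y$ vanishes it stays zero, so one may assume $y>0$. The substitution gives $v'\ge\frac{\alpha}{2}v-\frac{A}{2}$, i.e.\ $(v-\frac{A}{\alpha})'\ge\frac{\alpha}{2}(v-\frac{A}{\alpha})$. Integrating yields in one step $y(t)\le y(0)\bigl(1-A\sqrt{y(0)}/\alpha\bigr)^{-2}e^{-\alpha t}$. So $\varepsilon_0$ is simply chosen to make this prefactor at most $2$, and no continuity argument is needed.

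You instead argue in two stages. A bootstrap absorbs the nonlinear term and gives the weaker rate $\alpha/2$. That decay then makes the coefficient $Ay^{1/2}$ in $z'\le Ay^{1/2}z$, with $z=e^{\alpha t}y$, integrable in time. Gr\"onwall then recovers the full rate $\alpha$ with the factor $\exp\bigl(4A\sqrt{y(0)}/\alpha\bigr)\le 2$.

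The paper's argument is shorter and exact. It relies, however, on the power structure that the substitution linearises, and on dividing by $y$ (hence its separate remark about zeros). Yours never divides by $y$ when $y(0)>0$. It also works verbatim for any perturbation bounded by $\phi(y)\,y$ with $\phi$ nondecreasing and $\int_0^\infty\phi(\delta e^{-\alpha s/2})\,ds\to0$ as $\delta\to0$. The price is an extra bootstrap step.

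One small inaccuracy: for $y(0)=0$ your bootstrap does not literally apply, since the claim $t^*>0$ uses $y(0)<2y(0)$. The alternative you give does work: compare $y'\le Ay^{3/2}$ with the zero solution of this locally Lipschitz ODE, which forces $y\equiv0$.
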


\begin{proof}
Once $y(t_0)=0$ holds then $y(t)=0$ holds for all $t\geq t_0$, so we may assume $y>0$ on $[0,T)$ without loss of generality.
Set $v:=y^{-1/2}$.
Then
\[
v'=-\frac12 y^{-3/2}y'
\ge
\frac{\alpha}{2}v-\frac{A}{2}
\]
A comparison argument finishes the proof.
\end{proof}

We are now ready to prove the main decay estimate for the curvature oscillation.

\begin{thm}\label{thm:normalised_circle_stability}
Assume \(\hat\lambda_{c,\omega}>0\).
Then there exists \(\varepsilon=\varepsilon(c,\omega)>0\) such that if
\[
e(0)\le\varepsilon,
\]
then $e(t)$ is non-increasing along the flow \eqref{L-CCDF}, the maximal existence time is $T=\infty$, and
\[
e(t)\le 2e(0)e^{-\hat\lambda_{c,\omega}t}
\qquad\text{for all }t\ge0.
\]
\end{thm}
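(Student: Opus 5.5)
The plan is to turn the identity \eqref{eq:e_expansion_general_c} into a closed differential inequality of the form $e'\le-\hat\lambda_{c,\omega}e+Ae^{3/2}$. Once that is in hand, Lemma \ref{lem:ode_general_c} will give the decay, and Proposition \ref{prop:immortal_criterion} will give $T=\infty$.

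The main obstacle is the remainder bound \eqref{eq:remainder_bound_general_c}. It involves $\int f_{ss}^2\,ds$, which $e$ does not control, so it has to be absorbed by the quadratic form itself. I want to do this without losing any of the rate $\hat\lambda_{c,\omega}$. A plain convex splitting of $Q_{c,\omega}$ would cost a small part of the rate, so I would work with the nonnegative excess $D:=Q_{c,\omega}[f]-\hat\lambda_{c,\omega}e$ instead.

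\emph{Step 1: controlling $\int f_{ss}^2$ by $D$.} Since $p_c(x)=2x^4+O(x^2+1)$, there is $C=C(c,\omega)$ with
\[
x^4\le p_c(x)-\hat\lambda_{c,\omega}+C\qquad\text{for all }x\in\R .
\]
In Fourier variables $\int f_{ss}^2\,ds=2\pi\omega\sum_n (n/\omega)^4|a_n|^2$. Off the translation modes $n=\pm\omega$ we have $p_c(n/\omega)-\hat\lambda_{c,\omega}\ge0$. On the translation modes the error is bounded by $CE_{\mathrm{tr}}\le Ce^2$, by Lemma \ref{lem:first_harmonics_general_c}. Summing gives
\[
\int f_{ss}^2\,ds\le D+C e+Ce^2 .
\]
The proof of Lemma \ref{lem:spectral_gap_general_c} gives in the same way $D\ge -Ce^2$.

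\emph{Step 2: the differential inequality.} Suppose $e\le1$ on some interval $[0,t_1)$. Inserting Step 1 into \eqref{eq:e_expansion_general_c} and \eqref{eq:remainder_bound_general_c} gives
\[
e'\le -D-\hat\lambda_{c,\omega}e+C\sqrt e\,(D+Ce)
=-(1-C\sqrt e)\,D-\hat\lambda_{c,\omega}e+Ce^{3/2}.
\]
Now assume also $C\sqrt e\le\frac12$. Then $1-C\sqrt e\in[\frac12,1]$, and the bound $D\ge-Ce^2$ yields
\[
-(1-C\sqrt e)D\le Ce^2\le Ce^{3/2}.
\]
Hence
\[
e'\le-\hat\lambda_{c,\omega}e+Ae^{3/2}\qquad\text{with }A=A(c,\omega).
\]

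\emph{Step 3: continuity argument.} Choose $\varepsilon\le\varepsilon_0(\hat\lambda_{c,\omega},A)$ from Lemma \ref{lem:ode_general_c}, small enough that $2\varepsilon\le1$, $C\sqrt{2\varepsilon}\le\frac12$, and $A\sqrt{2\varepsilon}<\hat\lambda_{c,\omega}$.

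Let $t_1$ be the supremum of the times up to which $e\le2\varepsilon$. On $[0,t_1)$ the inequality of Step 2 holds. Lemma \ref{lem:ode_general_c} then gives $e(t)\le2e(0)e^{-\hat\lambda_{c,\omega}t}$. Moreover the right-hand side $e(-\hat\lambda_{c,\omega}+Ae^{1/2})$ is $\le0$, so $e$ is non-increasing there. In particular $e\le e(0)\le\varepsilon<2\varepsilon$, and by continuity $t_1=T$.

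Thus $\sup_t e(t)<\infty$, and Proposition \ref{prop:immortal_criterion} gives $T=\infty$. This completes all the claims.
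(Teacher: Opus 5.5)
Your proof is correct and follows essentially the paper's route. You absorb $\int f_{ss}^2\,ds$ into the quadratic form, use $Q_{c,\omega}[f]\ge\hat\lambda_{c,\omega}e-Ce^2$ (your $D\ge -Ce^2$) to reach $e'\le-\hat\lambda_{c,\omega}e+Ce^{3/2}$ with the full rate, and conclude via Lemma~\ref{lem:ode_general_c}, a continuity argument and Proposition~\ref{prop:immortal_criterion}. The differences are cosmetic:
\begin{itemize}
\item You control $\int f_{ss}^2\,ds$ by the G\aa rding-type bound $\int f_{ss}^2\,ds\le D+Ce$. This follows directly from $x^4\le p_c(x)-\hat\lambda_{c,\omega}+C$ for all $x$, so your translation-mode remark is unnecessary. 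The paper instead uses the coercive bound $\int f_{ss}^2\,ds\le C(Q_{c,\omega}[f]+e^2)$, which relies on $\hat\lambda_{c,\omega}>0$.
\item You close the bootstrap by monotonicity rather than by the exponential bound.
\item The paper's direct use of $Q_{c,\omega}$ does not lose any of the rate either, so rewriting in terms of the excess $D$ is not needed for that.
\end{itemize}
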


\begin{proof}
By the Parseval identity,
\[
\int f_{ss}^2\,ds
=
2\pi\omega\sum_{n\in\mathbb Z\setminus\{0,\pm\omega\}}
\Big(\frac{n}{\omega}\Big)^4|a_n|^2
+
E_{\mathrm{tr}}(t).
\]
Since \(\hat\lambda_{c,\omega}>0\), the polynomial \(p_c\) is strictly positive on the discrete set
\(\{n/\omega:n\in\mathbb Z\setminus\{0,\pm\omega\}\}\), and \(p_c(x)\sim 2x^4\) as \(|x|\to\infty\).
Hence there exists \(A=A(c,\omega)>0\) such that
\[
x^4\le A\,p_c(x)
\qquad\text{for all }x=\frac{n}{\omega},\quad n\in\mathbb Z\setminus\{0,\pm\omega\}.
\]
Therefore
\begin{align*}
\int f_{ss}^2\,ds
&\le
A\,2\pi\omega\sum_{n\neq0,\pm\omega}
p_c\Big(\frac{n}{\omega}\Big)|a_n|^2
+
E_{\mathrm{tr}}(t)
\\
&=
A\Big(Q_{c,\omega}[f]+p_c(1)E_{\mathrm{tr}}(t)\Big)+E_{\mathrm{tr}}(t).
\end{align*}
By Lemma \ref{lem:first_harmonics_general_c},
\begin{equation}\label{eq:H2_from_Q_general_c_new}
\int f_{ss}^2\,ds\le C\bigl(Q_{c,\omega}[f]+e(t)^2\bigr).
\end{equation}

Combining Proposition \ref{prop:quadratic_expansion_general_c} with
\eqref{eq:H2_from_Q_general_c_new} gives, whenever \(e(t)\le1\),
\begin{align*}
e'(t)
&\le
-Q_{c,\omega}[f]
+
C\sqrt{e(t)}\Bigl(\int f_{ss}^2\,ds+e(t)\Bigr)
\\
&\le
-Q_{c,\omega}[f]
+
C\sqrt{e(t)}\bigl(Q_{c,\omega}[f]+e(t)\bigr)
\\
&\le
-(1-C\sqrt{e(t)})Q_{c,\omega}[f]+Ce(t)^{3/2}.
\end{align*}
In particular, there exists $\varepsilon_1=\varepsilon_1(c,\omega)\in(0,1]$ such that if $e(t)\leq\varepsilon_1$ then the above prefactor $1-C\sqrt{e(t)}$ is positive.
Using Lemma \ref{lem:spectral_gap_general_c} and $\varepsilon_1\leq1$, we obtain
\begin{equation}\label{eq:e_ode_main}
e(t)\leq \varepsilon_1 \quad\Longrightarrow\quad e'(t)\le -\hat\lambda_{c,\omega}e(t)+C e(t)^{3/2}.
\end{equation}

Let \(T_{\max}\in(0,\infty]\) denote the maximal existence time of the normalised flow under consideration.
Set
\[
T_*:=\sup\{t<T_{\max}: e(s)\le\delta_* \text{ for all } s\in[0,t]\}, \quad \delta_*:=\min\Big\{\varepsilon_1,\frac{\hat{\lambda}_{c,\omega}^2}{4C^2}\Big\}.
\]
Then $0\leq e(t)\leq\delta_*\leq \varepsilon_1$ for $t\in [0,T_*)$ and hence, by \eqref{eq:e_ode_main}, $y:=e/\delta_*$ satisfies
\[
y'(t) \leq -\hat{\lambda}_{c,\omega}y(t) + C\sqrt{\delta_*}y(t)^{3/2} \qquad \text{for}\ t\in[0,T_*).
\]
Hence, applying Lemma \ref{lem:ode_general_c} to $y$, we deduce that
\[
e(0) \leq \delta_*\varepsilon_0(\hat{\lambda}_{c,\omega},C\sqrt{\delta_*}) \quad\Longrightarrow\quad e(t)\leq 2e(0)e^{-\hat{\lambda}_{c,\omega}t}.
\]
Therefore, if we suppose
\[
e(0)\le \varepsilon:=\min\Big\{\frac{\delta_*}{4},\delta_*\varepsilon_0(\hat{\lambda}_{c,\omega},C\sqrt{\delta_*})\Big\},
\]
then we obtain $e(t)\leq 2e(0)e^{-\hat{\lambda}_{c,\omega}t} \leq \delta_*/2$ for $t\in[0,T_*)$.
By continuity, we obtain $T_*=T$.
This together with Proposition \ref{prop:immortal_criterion} implies
\[
T_{\max}=\infty \quad\text{and}\quad e(t)\leq 2e(0)e^{-\hat{\lambda}_{c,\omega}t} \qquad \text{for all}\ t\geq0.
\]
Finally, we insert $e(t)\leq\delta_*\leq \hat{\lambda}_{c,\omega}^2/(4C^2)$ into \eqref{eq:e_ode_main} to deduce
\[
e'(t)\leq -\hat{\lambda}_{c,\omega}e(t)+Ce(t)\sqrt{\hat{\lambda}_{c,\omega}^2/(4C^2)} \leq -\frac{\hat{\lambda}_{c,\omega}}{2}e(t)\leq 0,
\]
which yields the desired non-increasing property.
\end{proof}

In particular, this result yields smooth exponential convergence.

\begin{cor}\label{cor:smooth_convergence}
    Assume \(\hat\lambda_{c,\omega}>0\), and let \(\varepsilon=\varepsilon(c,\omega)>0\) be the constant in Theorem \ref{thm:normalised_circle_stability}.
    If a solution $\gamma$ of \eqref{L-CCDF} satisfies $e(0)\leq\varepsilon$, then 
    \[
    \int k_{s^m}^2\,ds \leq C(m,c,\omega,\gamma_0)e^{-\hat{\lambda}_{c,\omega}t/2}.
    \]
    for all $t\geq0$ and all integers $m\geq1$.
    In particular, after translation and reparametrisation, the curve $\gamma(\cdot,t)$ converges to the unit $\omega$-circle smoothly and exponentially as $t\to\infty$.
\end{cor}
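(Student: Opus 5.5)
The plan is to upgrade the exponential $L^2$ decay of $f=k-1$ from Theorem \ref{thm:normalised_circle_stability} to exponential decay of every $\int k_{s^m}^2\,ds$, $m\ge1$, by interpolating between a uniform high-order bound and the decaying $L^2$ norm. First, since $e(t)\le e(0)\le\varepsilon$ for all $t$, Lemma \ref{lem:uniform_high_derivatives_normalised} applies with $E_0=\varepsilon$ on $[0,\infty)$. It gives, for every $j\ge0$,
\[
\sup_{t\ge0}\int k_{s^j}^2\,ds\le C_j(c,\omega,\gamma_0).
\]
For $m\ge1$ we have $k_{s^m}=f_{s^m}$, and the length is fixed at $L_0=2\pi\omega$. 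The Gagliardo--Nirenberg (or simply Fourier/Hölder) interpolation on the circle of fixed length then gives
\[
\int f_{s^m}^2\,ds\le \Big(\int f^2\,ds\Big)^{1/2}\Big(\int f_{s^{2m}}^2\,ds\Big)^{1/2}.
\]
In Fourier terms this is just Cauchy--Schwarz: $\sum |n/\omega|^{2m}|a_n|^2\le(\sum|a_n|^2)^{1/2}(\sum|n/\omega|^{4m}|a_n|^2)^{1/2}$. Combining this with $\int f^2\,ds=e(t)\le 2e(0)e^{-\hat\lambda_{c,\omega}t}$ and the uniform bound on $\int k_{s^{2m}}^2\,ds$ yields
\[
\int k_{s^m}^2\,ds\le \sqrt{2e(0)C_{2m}}\,e^{-\hat\lambda_{c,\omega}t/2},
\]
which is exactly the claimed rate.

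For the geometric convergence, parametrise by arclength on $\R/2\pi\omega\mathbb Z$. By Sobolev embedding, $\|k-1\|_{C^m}\le Ce^{-\hat\lambda_{c,\omega}t/4}$ for every $m$. Reconstruct $\gamma$ from its curvature: the tangent angle is $\theta(s)=\theta(0,t)+s+\int_0^s f$. We rotate or reparametrise (a shift of the arclength origin plus a rotation; if the statement only allows translation and reparametrisation, the rotation is absorbed by shifting the parameter $s$) so that $\theta(0,t)=0$. Then $T=(\cos\theta,\sin\theta)$ is exponentially close in $C^m$ to the tangent of $\gamma_\omega$. Integrating, $\gamma(s,t)-\gamma(0,t)$ is exponentially close to $\gamma_\omega(s)-\gamma_\omega(0)$, and we choose the translation $p(t)$ accordingly, for instance the centre of mass.

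The main obstacle is making sure the normalisation does not drift. The rotation angle $\theta(0,t)$ and the translation are chosen freshly at each time, which the statement permits ("after translation and reparametrisation"). Smooth convergence in the arclength parametrisation then follows directly from the $C^m$ bounds on $f$; the only care needed is bookkeeping the exponent, where Sobolev loses nothing beyond constants because the length is fixed.
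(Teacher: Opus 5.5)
Your proposal is correct and is essentially the paper's argument. Both use the interpolation $\int f_{s^m}^2\,ds\le \|f\|_2\|f_{s^{2m}}\|_2$ (the paper gets it by integrating by parts, you via Cauchy--Schwarz on Fourier coefficients), combined with $e(t)\le 2e(0)e^{-\hat\lambda_{c,\omega}t}$ and the uniform bounds of Lemma~\ref{lem:uniform_high_derivatives_normalised}, followed by Sobolev embedding and reconstruction of the curve from its tangent angle, with the rotation absorbed into a shift of the arclength origin. Your rate $e^{-\hat\lambda_{c,\omega}t/4}$ for the $C^m$ norms is in fact what follows from the integral estimate (the paper's rate $\hat\lambda_{c,\omega}/2$ for $\|k-1\|_{H^m}$ holds only for the squared norm); this does not affect the qualitative exponential convergence claimed.
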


\begin{proof}
    Integrating by parts and using Theorem \ref{thm:normalised_circle_stability} and Lemma \ref{lem:uniform_high_derivatives_normalised} yield
    \[
    \int k_{s^m}^2\,ds = (-1)^m\int k_{s^{2m}}(k-1)\,ds \leq \|k_{s^{2m}}\|_{2}\sqrt{e(t)}\leq C(m,c,\omega,\gamma_0)e^{-\hat{\lambda}_{c,\omega}t/2}.
    \]
    Then, after arclength reparametrisation $s\in\R/(2\pi\omega\mathbb{Z})$, for every integer \(m\ge0\), we have $\|k(\cdot,t)-1\|_{H^m}\le C_m e^{-\hat{\lambda}_{c,\omega}t/2}$ and hence, by Sobolev embedding,
    \[
    \|k(\cdot,t)-1\|_{C^m}\le C_m e^{-\hat{\lambda}_{c,\omega}t/2}
    \qquad\text{for all }t\ge0.
    \]
    Since \(\theta_s=k\) and $\gamma_s=(\cos\theta,\sin\theta)$, after normalising so that $\gamma_s(0,t)=(\gamma_\omega)_s(0)$ by reparametrisation, we have the convergence for every integer $m\geq0$
    \[
    \| \gamma(\cdot,t)-a(t)-\gamma_\omega \|_{C^{m}} \le C_m e^{-\hat{\lambda}_{c,\omega}t/2} \qquad\text{for all }t\ge0,
    \]
    where $a(t)\in\R^2$ are translations.
    This proves the claim.
\end{proof}

\subsection{Convergence of the unnormalised flow}

Finally, we translate the above results back to the original unnormalised flow \eqref{CCDF}.
Since in this section we use $\gamma(\cdot,t)$ for the length-normalised flow \eqref{L-CCDF}, we denote by $\Gamma(\cdot,\tau)$ the corresponding unnormalised flow also in the proof of Theorem \ref{thm:intro_stability}.

We first determine the asymptotic rate of the rescaling factor $\sigma(t)=\sigma(\tau(t))$ in Lemma \ref{lem:length_preserving_normalisation}.

\begin{lem}\label{lem:lambda_asymptotics}
Assume \(\hat\lambda_{c,\omega}>0\), and let \(\varepsilon=\varepsilon(c,\omega)>0\) be the constant in Theorem \ref{thm:normalised_circle_stability}.
If a solution $\gamma$ of \eqref{L-CCDF} satisfies $e(0)\leq\varepsilon$, then
\[
|\lambda(t)+c|\le C(c,\omega,\gamma_0) e^{-\hat{\lambda}_{c,\omega}t/2}
\qquad\text{for all }t\ge0.
\]
In addition,
\[
\sigma(t)e^{-ct}\to \sigma_\infty\in(0,\infty)
\qquad\text{as }t\to\infty.
\]
\end{lem}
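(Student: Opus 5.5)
We will use the explicit formula for $\lambda$ already obtained in the proof of Proposition \ref{prop:quadratic_expansion_general_c}:
\[
\lambda+c
=
\frac{1}{2\pi\omega}\int f_s^2\,ds
-\frac{6c}{2\pi\omega}e
-\frac{4c}{2\pi\omega}\int f^3\,ds
-\frac{c}{2\pi\omega}\int f^4\,ds .
\]
Each term on the right will be bounded by $C e^{-\hat\lambda_{c,\omega}t/2}$ (or better).

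The term $\int f_s^2\,ds=\int k_s^2\,ds$ is handled by Corollary \ref{cor:smooth_convergence} with $m=1$, which gives exactly the rate $e^{-\hat\lambda_{c,\omega}t/2}$. The term $e$ decays like $e^{-\hat\lambda_{c,\omega}t}$ by Theorem \ref{thm:normalised_circle_stability}. For the cubic and quartic terms we use $\int|f|^m\,ds\le \|f\|_\infty^{m-2}e$. Here $\|f\|_\infty$ is uniformly bounded, by the one-dimensional Gagliardo--Nirenberg inequality \eqref{eq:1D_GN} together with the uniform bound on $\int f_{ss}^2\,ds=\int k_{ss}^2\,ds$ from Lemma \ref{lem:uniform_high_derivatives_normalised}, or directly from Corollary \ref{cor:smooth_convergence}. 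Hence these terms are $O(e)$, and the first assertion follows with a constant depending on $c,\omega,\gamma_0$.

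For the asymptotics of $\sigma$, recall from the proof of Lemma \ref{lem:length_preserving_normalisation} that $\lambda=-\sigma^3\,d\sigma/d\tau$ and $dt/d\tau=\sigma^{-4}$. In the time $t$ this gives
\[
\frac{d\sigma}{dt}=\sigma^4\frac{d\sigma}{d\tau}=-\lambda\sigma,
\qquad\text{so}\qquad
\frac{d}{dt}\log\sigma=-\lambda .
\]
Therefore
\[
\log\bigl(\sigma(t)e^{-ct}\bigr)=\log\sigma(0)-\int_0^t(\lambda(t')+c)\,dt'.
\]
By the first part, $\lambda+c$ is integrable on $[0,\infty)$, and $T=\infty$ by Theorem \ref{thm:normalised_circle_stability}. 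The integral therefore converges, and $\sigma(t)e^{-ct}\to\sigma_\infty:=\sigma(0)\exp\bigl(-\int_0^\infty(\lambda+c)\,dt'\bigr)\in(0,\infty)$. The convergence holds at the exponential rate $e^{-\hat\lambda_{c,\omega}t/2}$.

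The main point to check is the sign convention in $d\sigma/dt=-\lambda\sigma$, i.e.\ the chain rule between $\tau$ and $t$. This should be cross-checked on the exact circle: there $\lambda=-c$, so $\sigma=e^{ct}$, which is consistent with expansion for $c>0$. Everything else follows directly from the earlier results.
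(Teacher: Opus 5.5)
Your proposal is correct and follows the paper's proof essentially verbatim. You use the same expansion of $\lambda+c$, controlled termwise by Theorem~\ref{thm:normalised_circle_stability} and Corollary~\ref{cor:smooth_convergence}, with the only difference being that you spell out the bound $\int|f|^m\,ds\le\|f\|_\infty^{m-2}e$ for the cubic and quartic terms. You then integrate $\frac{d}{dt}\log(e^{-ct}\sigma)=-(\lambda+c)$ exactly as the paper does; your sign check $d\sigma/dt=-\lambda\sigma$ is right, and the extra observation that $\sigma e^{-ct}\to\sigma_\infty$ at rate $e^{-\hat\lambda_{c,\omega}t/2}$ is a valid bonus.
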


\begin{proof}
Applying Theorem \ref{thm:normalised_circle_stability} and Corollary \ref{cor:smooth_convergence} to the expression
\[
\lambda+c
=
\frac{1}{2\pi\omega}\left(
\int f_s^2\,ds
-6ce
-4c\int f^3\,ds
-c\int f^4\,ds
\right),
\]
we deduce that
\[
|\lambda(t)+c|\le C(c,\omega,\gamma_0) e^{-\hat{\lambda}_{c,\omega}t/2}.
\]
To discuss $\sigma(t)$, we recall from the proof of Lemma \ref{lem:length_preserving_normalisation} that $\lambda(t)=-\sigma^3(\tau)\sigma'(\tau)$ and $\frac{dt}{d\tau}=\sigma^{-4}$.
These yield $\frac{d}{dt}\sigma(\tau(t))=-\lambda(t)\sigma(\tau(t))$ and hence
\[
\frac{d}{dt}\log\big( e^{-ct}\sigma(\tau(t)) \big) = -(\lambda+c)
\]
so that
\[
e^{-ct}\sigma(t)
=
\sigma(0)\exp\left(-\int_0^t(\lambda(u)+c)\,du\right),
\]
and the integral converges absolutely thanks to the exponential decay of $|\lambda+c|$.
Hence $e^{-ct}\sigma(t)\to \sigma_\infty\in(0,\infty)$.
\end{proof}

Now we prove the convergence properties for the unnormalised flow $\Gamma$.

\begin{proof}[Proof of Theorem \ref{thm:intro_stability}]
    Let $\Gamma:\S\times[0,T)\to\R^2$ denote a solution to \eqref{CCDF} with time parameter $\tau$, and $\gamma$ be the corresponding length-normalised flow \eqref{L-CCDF} defined in Lemma \ref{lem:length_preserving_normalisation} with time parameter $t$.
    By Theorem \ref{thm:normalised_circle_stability} and the scale-invariance of $K_{\mathrm{osc}}$, we have the non-increasing and convergence properties of $K_{\mathrm{osc}}(\Gamma(\cdot,\tau))$ in $\tau$.
    
    It remains to show the trichotomy for the convergence of $\Gamma$.
    By Corollary \ref{cor:smooth_convergence} there is a family of translations $a(t)\in\R^2$ such that
    \[
    \gamma(\cdot,t)-a(t) \to \gamma_\omega
    \]
    smoothly as $t\to\infty$ after reparametrisation.
    Set $p(\tau):=\sigma(\tau)a(t(\tau))$.
    Then
    \begin{equation}\label{eq:Gamma-gamma_relation}
        \Gamma(\cdot,\tau)-p(\tau)
        =
        \sigma(\tau)\big( \gamma(\cdot,t(\tau)) - a(t(\tau)) \big).
    \end{equation}
    From Lemma \ref{lem:lambda_asymptotics},
    \begin{equation}\label{eq:sigma_infty}
        \sigma(t)=\sigma_\infty e^{ct}(1+o(1)) \qquad (t\to\infty).
    \end{equation}
    Using this with
    \begin{equation}\label{eq:dtau/dt}
        \frac{d\tau}{dt}=\sigma(t)^4
    \end{equation}
    from Lemma \ref{lem:length_preserving_normalisation},
    we obtain the three cases.
    
    If $c>0$, then l'H\^opital's rule with \eqref{eq:sigma_infty} and \eqref{eq:dtau/dt} yields
    \[
    \tau(t)\sim \frac{\sigma_\infty^4}{4c}e^{4ct},
    \quad\text{and hence}\quad
    \sigma(\tau)^4\sim 4c\tau.
    \]
    Therefore, by \eqref{eq:Gamma-gamma_relation} we have $(4c\tau)^{-1/4}\big(\Gamma(\cdot,\tau)-p(\tau)\big)\to\gamma_\omega$ smoothly as $\tau\to\infty$.
    
    If $c=0$, then \(\sigma(t)\to\sigma_\infty\), so $\Gamma(\cdot,\tau)-p(\tau) \to \sigma_\infty\gamma_\omega$ smoothly as $\tau\to\infty$.
    
    Finally, if $c<0$, then by \eqref{eq:sigma_infty}
    \[
    T:=\int_0^\infty \sigma(t)^4\,dt<\infty,
    \]
    and again l'H\^opital's rule with \eqref{eq:sigma_infty} and \eqref{eq:dtau/dt} yields
    \[
    T-\tau(t)\sim \frac{\sigma_\infty^4}{4|c|}e^{4ct},
    \quad\text{and hence}\quad
    \sigma(\tau)^4\sim 4|c|(T-\tau).
    \]
    By \eqref{eq:Gamma-gamma_relation} we have $(4|c|(T-\tau))^{-1/4}\big(\Gamma(\cdot,\tau)-p(\tau)\big)\to\gamma_\omega$ smoothly as $\tau\to T$.
\end{proof}

\begin{rem}
    From the above proof we can also obtain the convergence rates in the original time parameter $\tau$.
    Indeed, since $t(\tau)\sim\frac{1}{4c}\log\tau$ for $c>0$ and $t(\tau)\sim-\frac{1}{4|c|}\log(T-\tau)$ for $c<0$, as well as $t(\tau)\sim\sigma_\infty^{-4}\tau$ for $c=0$, we have
    \begin{align}
        K_\mathrm{osc}(\Gamma(\cdot,\tau)) \leq 
        \begin{cases}
         C(1+\tau)^{-\hat{\lambda}_{c,\omega}/(4c)} & (c>0),\\
         Ce^{-(\hat{\lambda}_{c,\omega}/\sigma_\infty^4)\tau} & (c=0),\\
         C(T-\tau)^{\hat{\lambda}_{c,\omega}/(4|c|)} & (c<0),
        \end{cases}
    \end{align}
    where $C=C(c,\omega,\gamma_0)>0$.
    In particular, when $c\leq0$, the condition $\hat{\lambda}_{c,\omega}>0$ holds if and only if $\omega=1$, where $\hat{\lambda}_{c,1}=24-16c\geq24$ (see also the proof of Corollary \ref{cor:embedded_circle_stability} below).
\end{rem}

\subsection{Instability of circles}

Here we prove the instability theorem by explicitly computing the curvature oscillation of a perturbed circle.

\begin{proof}[Proof of Theorem \ref{thm:intro_instability}]
Since the curvature oscillation is scale-invariant, it suffices to construct a suitable family of initial curves $\{\gamma_{0,\eta}\}_\eta$ such that $\gamma_{0,\eta}$ smoothly converges as $\eta\to0$ to the unit $\omega$-circle, and the length-normalised flow $\gamma_\eta$ of \eqref{L-CCDF} starting from $\gamma_{0,\eta}$ satisfies $\frac{d}{dt}K_\mathrm{osc}(\gamma_\eta(\cdot,0))>0$ for any small $\eta\neq0$.

By \eqref{eq:def_lambda_hat} and $p_c(x)\to\infty$ as $|x|\to\infty$, the minimum in the definition of $\hat\lambda_{c,\omega}$ is attained and negative.
As \(p_c\) is even, we may choose $n_0\in\N\setminus\{\omega\}$ such that
\[
p_c\left(\frac{n_0}{\omega}\right)=\hat\lambda_{c,\omega}<0.
\]

For \(\eta\in\mathbb R\), define the support function
\[
h_\eta(\vartheta):=1+\eta\cos\left(\frac{n_0\vartheta}{\omega}\right),
\qquad
\vartheta\in\mathbb R/(2\pi\omega\mathbb Z),
\]
and its radius of curvature
\[
\rho_\eta:=h_\eta+(h_{\eta})_{\vartheta\vartheta}
=
1+a\eta\cos\left(\frac{n_0\vartheta}{\omega}\right), \quad a:=1-\frac{n_0^2}{\omega^2}\neq 0.
\]
Hence \(\rho_\eta>0\) whenever \(|\eta|\) is sufficiently small, so that the support function $h_\eta$ defines a smooth closed locally convex curve $\gamma_{0,\eta}(\vartheta)=h_\eta(\vartheta)(\cos\vartheta,\sin\vartheta)+(h_\eta)_\vartheta(\vartheta)(-\sin\vartheta,\cos\vartheta)$ of turning number \(\omega\). 
Moreover,
\[
L(\gamma_{0,\eta})
=
\int_0^{2\pi\omega}\rho_\eta\,d\vartheta
=
\int_0^{2\pi\omega}h_\eta\,d\vartheta
=
2\pi\omega.
\]
Since $h_\eta\to1$ smoothly as $\eta\to0$, it is clear that $\gamma_{0,\eta}$ smoothly converges to the unit $\omega$-circle up to reparametrisation.
In particular, $K_{\mathrm{osc}}(\gamma_{0,\eta})\to0$ as $\eta\to0$.

Let \(\gamma_\eta(\cdot,t)\) be the 
normalised flow \eqref{L-CCDF} starting from \(\gamma_{0,\eta}\), and define
\[
e_\eta(t):=\int_{\gamma_\eta(\cdot,t)}(k_\eta-1)^2\,ds = \frac{K_{\mathrm{osc}}(\gamma_\eta(\cdot,t))}{2\pi\omega}.
\]
It remains to show that $e_\eta'(0)>0$ for all small $\eta\neq0$. 

By Proposition \ref{prop:quadratic_expansion_general_c},
\[
e_\eta'(0)=-Q_{c,\omega}[f_\eta]+R_{c,\omega}[f_\eta], \qquad f_\eta:=k_\eta-1=\rho_\eta^{-1}-1.
\]
Since
\[
f_\eta
=
-a\eta\cos\!\left(\frac{n_0\vartheta}{\omega}\right)+O(\eta^2)
\qquad\text{in }C^2\bigl(\mathbb R/(2\pi\omega\mathbb Z)\bigr),
\]
and
\[
ds=\rho_\eta\,d\vartheta=(1+O(\eta))\,d\vartheta,
\qquad
\partial_s=\rho_\eta^{-1}\partial_\vartheta=(1+O(\eta))\partial_\vartheta,
\]
we obtain
\[
(f_\eta)_s
=
a\eta\frac{n_0}{\omega}\sin\left(\frac{n_0\vartheta}{\omega}\right)+O(\eta^2),
\qquad
(f_\eta)_{ss}
=
a\eta\frac{n_0^2}{\omega^2}\cos\left(\frac{n_0\vartheta}{\omega}\right)+O(\eta^2).
\]
Therefore
\[
e_\eta(0)=\int f_\eta^2\,ds
=
\pi\omega a^2\eta^2+O(\eta^3),\qquad
\int (f_\eta)_s^2\,ds
=
\pi\omega a^2\Bigl(\frac{n_0}{\omega}\Bigr)^2\eta^2+O(\eta^3),
\]
and
\[
\int (f_\eta)_{ss}^2\,ds
=
\pi\omega a^2\Bigl(\frac{n_0}{\omega}\Bigr)^4\eta^2+O(\eta^3).
\]
Substituting these expansions into the definition of \(Q_{c,\omega}\), we find
\[
Q_{c,\omega}[f_\eta]
=
\pi\omega a^2
p_c\!\left(\frac{n_0}{\omega}\right)\eta^2
+O(\eta^3)
=
\pi\omega a^2\hat\lambda_{c,\omega}\eta^2
+O(\eta^3).
\]
On the other hand, by the remainder estimate in Proposition \ref{prop:quadratic_expansion_general_c},
\[
|R_{c,\omega}[f_\eta]|
\le
C\sqrt{e_\eta(0)}
\left(
\int (f_\eta)_{ss}^2\,ds+e_\eta(0)
\right)
=
O(|\eta|^3).
\]
Hence
\[
e_\eta'(0)
=
-\pi\omega a^2\hat\lambda_{c,\omega}\eta^2+O(|\eta|^3),
\]
which is positive for all sufficiently small \(\eta\neq 0\) due to the assumption \(\hat\lambda_{c,\omega}<0\).
The proof is now complete.
\end{proof} 

\subsection{Sharp parameter ranges for stability}

We now investigate the precise value of $\hat{\lambda}_{c,\omega}$.
In fact, both Corollary \ref{cor:embedded_circle_stability} and Corollary \ref{cor:stability_all_omega} directly follow from the following general criterion.

\begin{prop}\label{prop:stability_interval}
Let $\omega\ge1$ be an integer, and
\[
c_\omega^-:=
\max_{\substack{n\in\mathbb N\\ 1\leq n \leq \omega-1}}
\frac{n^2(\omega^2-n^2)}{\omega^2(4\omega^2-3n^2)},
\qquad
c_\omega^+:=
\min_{\substack{n\in\mathbb N\\ n>2\omega/\sqrt3}}
\frac{n^2(\omega^2-n^2)}{\omega^2(4\omega^2-3n^2)},
\]
with the convention $c_1^-:=-\infty$ (equivalently, $\max\varnothing=-\infty$).
Let $c\in\R$.
Then $\hat\lambda_{c,\omega}>0$ holds if and only if $c_\omega^- < c< c_\omega^+$.
In addition, $\hat\lambda_{c,\omega}<0$ holds if and only if either $c< c_\omega^-$ or $c>c_\omega^+$.
Furthermore, $c_1^+=\frac32$, and for all $\omega\geq2$,
\[
0< c_\omega^- < \frac19, \qquad 1< c_\omega^+,\qquad \sup_{\omega\geq2}c_\omega^-=\frac19, \qquad \inf_{\omega\geq2}c_\omega^+=1.
\]
\end{prop}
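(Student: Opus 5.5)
The whole statement reduces to the sign of the single quantity $p_c(n/\omega)$, which is affine in $c$. Write $x=n/\omega$. Then
\[
p_c(x)=2x^2(x^2-1)-c\,(6x^2-8)=2x^2(x^2-1)+2c(4-3x^2).
\]
Hence, for $x\ne0,\pm1$, the sign of $p_c(x)$ is governed by comparing $c$ with
\[
g(x):=\frac{x^2(1-x^2)}{4-3x^2}.
\]
With $x=n/\omega$ one has $g(n/\omega)=\frac{n^2(\omega^2-n^2)}{\omega^2(4\omega^2-3n^2)}$. Since $p_c$ is even, we may restrict to $n\in\N\setminus\{0,\omega\}$.

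The plan is to split the admissible $n$ into three ranges.
\begin{enumerate}
\item For $1\le n\le\omega-1$ we have $0<x<1<2/\sqrt3$, so $4-3x^2>0$. Then $p_c(x)>0$ iff $c>g(x)$, and $p_c(x)<0$ iff $c<g(x)$.
\item For $n>2\omega/\sqrt3$ we have $4-3x^2<0$. Then $p_c(x)>0$ iff $c<g(x)$, and $p_c(x)<0$ iff $c>g(x)$.
\item For $\omega<n\le 2\omega/\sqrt3$ we have $x^2(x^2-1)>0$ and $4-3x^2\ge0$. So $p_c(x)>0$ for $c\ge0$ automatically, but for $c<0$ a separate argument is needed. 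Equality $n=2\omega/\sqrt3$ is impossible by irrationality, so this range is really $1<x<2/\sqrt3$.
\end{enumerate}

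To handle range (iii) uniformly in $c$, I would show it is dominated by ranges (i) and (ii), so that it never decides the minimum's sign. The cleanest route is to show $c_\omega^-<c_\omega^+$ and that on range (iii) the positivity holds whenever $c>c_\omega^-$.
\begin{itemize}
\item For $\omega=1$, range (iii) is empty, since no integer lies in $(1,2/\sqrt3]$. Range (i) is empty too, giving $c_1^-=-\infty$. Range (ii) gives $n\ge2$, and $g(n)=n^2(n^2-1)/(3n^2-4)$ is increasing for $n\ge2$, so $c_1^+=g(2)=12/8=\frac32$.
\item For $\omega\ge2$, the point $n=1$ in range (i) gives $c_\omega^->0$, and positive $c$ makes range (iii) harmless. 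When $c\le c_\omega^-$, the minimum is already $\le0$ from range (i) regardless of range (iii).
\end{itemize}
The if-and-only-if statements then follow. Finiteness of the min defining $c_\omega^+$ comes from $g(x)\sim x^2/3\to\infty$ on range (ii), where $g$ is positive.

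For the bounds, I would analyse $g$ as a real function. On $(0,1)$, $g$ is positive with maximum exactly $\frac19$, attained at $x^2=\frac23$. This is consistent with $p_{1/9}=\frac29(3x^2-2)^2$. Since $\sqrt{2/3}$ is irrational, $g(n/\omega)<\frac19$ strictly, giving $c_\omega^-<\frac19$. Taking rational approximations $n/\omega\to\sqrt{2/3}$ with $\omega\ge2$ and using continuity gives $\sup_{\omega\ge2} c_\omega^-=\frac19$.

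Similarly, on $(2/\sqrt3,\infty)$, $g$ has minimum $1$ at $x^2=2$, matching $p_1=2(x^2-2)^2$. Irrationality of $\sqrt2$ gives $c_\omega^+>1$, and approximating $\sqrt2$ gives the infimum $1$.

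The main obstacle I expect is the calculus for these two extremal values. For $y=x^2$, I would compute
\[
g=\frac{y(1-y)}{4-3y},
\qquad
g-\tfrac19=-\frac{(3y-2)^2}{9(4-3y)},
\qquad
g-1=\frac{(y-2)^2}{3y-4}.
\]
These identities are exactly the factorisations of $p_{1/9}$ and $p_1$ divided by the affine factor, which makes both the sign claims and the extremal values immediate. The remaining care is with the strictness and boundary cases, in particular the empty-set conventions when $\omega=1$.
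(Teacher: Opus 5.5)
Your proposal is correct and follows essentially the same route as the paper. Both arguments use the factorisation $p_c(x)=2(4-3x^2)\bigl(c-g(x)\bigr)$, split the cases by the sign of $4-3x^2$ (with $n/\omega\neq 2/\sqrt{3}$ by irrationality), and use the irrationality of $\sqrt{2/3}$ and $\sqrt{2}$ for the strict bounds and for the supremum and infimum. The one difference is minor: the paper absorbs the range $1<n/\omega<2/\sqrt{3}$ into the maximum by noting that $g<0$ there while $g>0$ on $(0,1)$, whereas you argue via the sign of $c$ together with $c_\omega^->0$. Your explicit identities for $g-\frac{1}{9}$ and $g-1$ also supply a proof of the extremal values, which the paper only asserts.
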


\begin{proof}
For $x\neq \pm 2/\sqrt3$, let
\[
g(x):=\frac{x^2(1-x^2)}{4-3x^2}.
\]
Since $p_c(x)=2(4-3x^2)\bigl(c-g(x)\bigr)$,
we have
\[
p_c(x)>0
\iff
\begin{cases}
c>g(x), & |x|<2/\sqrt3,\\[1mm]
c<g(x), & |x|>2/\sqrt3.
\end{cases}
\]
Therefore, by \eqref{eq:def_lambda_hat} the condition $\hat\lambda_{c,\omega}>0$ is equivalent to the simultaneous
inequalities
\[
c>\max_{\substack{n\in\mathbb N\setminus\{\omega\}\\ n<2\omega/\sqrt3}}
g\!\left(\frac n\omega\right),
\qquad
c<\min_{\substack{n\in\mathbb N\\ n>2\omega/\sqrt3}}
g\!\left(\frac n\omega\right),
\]
which is equivalent to the condition $c_\omega^-< c < c_\omega^+$; in particular, the left lower bound agrees with $c_\omega^-$ since $g>0$ on $(0,1)$ while $g<0$ on $(1,2/\sqrt{3})$. 
Negating the above consideration gives the characterisation
of $\hat\lambda_{c,\omega}<0$ similarly.

If $\omega=1$, the minimum of $c_1^+$ is clearly attained by $n=2$ since $g(n)$ is increasing for $n\geq2$; therefore, $c_1^+=\frac32$.

Now assume $\omega\geq2$.
The maximum in $c_\omega^-$ exists because the index set is finite, and the minimum in $c_\omega^+$ exists because $g(n/\omega)\to\infty$ as $n\to\infty$.
We have
\[
\max_{|x|<2/\sqrt3}g(x)=g(\pm\sqrt{2/3})=1/9, \qquad \inf_{|x|>2/\sqrt3}g(x)=g(\pm\sqrt{2})=1,
\]
both of which cannot be attained by rational numbers, so that
$c_\omega^-< 1/9$ and $1< c_\omega^+$.
In addition, considering sufficiently large $\omega$, we can pick rational numbers of the form $n/\omega$ which well approximate $2/\sqrt{3}$ and $\sqrt{2}$, yielding $\sup_{\omega\geq2}c_\omega^-=\frac19$ and $\inf_{\omega\geq2}c_\omega^+=1$.
Finally, since $g(1/\omega)>0$ we also have $c_\omega^->0$.
\end{proof}



Finally, we give a more precise formulation of Remark \ref{rem:arithmetic_stability_pattern}.
Rewriting the above criterion in terms of the roots $r_c^\pm$ of $p_c(x)$, we obtain the following equivalent criterion for $\omega\geq2$; the proof is omitted.

\begin{prop}
    The following properties hold:
    \begin{enumerate}
        \item If $c\leq0$, then $\hat{\lambda}_{c,\omega}<0$ holds for all integers $\omega\geq2$.
        \item Suppose that $c\in(0,\frac19)\cup(1,\infty)$ and $\omega\geq2$ is an integer, and let
        \[
         r_c^\pm:=\sqrt{\frac{3c+1\pm\sqrt{(c-1)(9c-1)}}{2}}.
        \]
        Then $\hat{\lambda}_{c,\omega}>0$ holds if and only if $\frac1\omega\N\cap[r_c^-,r_c^+]=\varnothing$.
        On the other hand, $\hat{\lambda}_{c,\omega}<0$ holds if and only if $\frac1\omega\N\cap(r_c^-,r_c^+)\neq\varnothing$.
    \end{enumerate}
\end{prop}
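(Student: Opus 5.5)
The plan is to reduce everything to the sign pattern of the even quartic $p_c(x)=2x^4-(6c+2)x^2+8c$ on the lattice $\frac1\omega\mathbb Z\setminus\{0,\pm1\}$. Since $p_c$ is even and $\hat\lambda_{c,\omega}$ is the minimum of $p_c(n/\omega)$ over $n\in\mathbb Z\setminus\{0,\pm\omega\}$, it suffices to look at $x=n/\omega$ with $n\in\N\setminus\{\omega\}$. We regard $p_c$ as a quadratic in $y=x^2$, namely $2y^2-(6c+2)y+8c$. Its discriminant is $(6c+2)^2-64c=4(9c^2-10c+1)=4(c-1)(9c-1)$. Hence $p_c$ has real positive roots in $y$ exactly when $c\le\frac19$ or $c\ge1$ (with $c>0$), and these roots are $y=(r_c^\pm)^2$. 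Also $p_c(x)<0$ precisely for $r_c^-<|x|<r_c^+$, and $p_c\le 0$ precisely for $|x|\in[r_c^-,r_c^+]$.

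For (i), with $c\le0$ I would simply evaluate at $n=1$, which is admissible because $\omega\ge2$. Then
\[
p_c(1/\omega)=2\omega^{-4}-(6c+2)\omega^{-2}+8c=\frac{2-2\omega^2}{\omega^4}+c\Big(8-\frac{6}{\omega^2}\Big).
\]
The first term is negative since $\omega\ge2$, and the second is $\le0$ since $8-6/\omega^2>0$ and $c\le0$. So $\hat\lambda_{c,\omega}<0$. Alternatively one can read this off from Proposition~\ref{prop:stability_interval} using $c_\omega^->0$.

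For (ii), with $c\in(0,\frac19)\cup(1,\infty)$ the discriminant is positive and $0<r_c^-<r_c^+$. Note that $(r_c^-)^2(r_c^+)^2=4c>0$, so both roots are positive. By the sign description above, $\hat\lambda_{c,\omega}>0$ iff no admissible $n/\omega$ lies in $[r_c^-,r_c^+]$, and $\hat\lambda_{c,\omega}<0$ iff some admissible $n/\omega$ lies in $(r_c^-,r_c^+)$. The only gap is the excluded point $x=1$, that is $n=\omega$, which lies in $\frac1\omega\N$. I must check that $1\notin[r_c^-,r_c^+]$, which holds because $p_c(1)=2-6c-2+8c=2c>0$. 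So including or excluding $n=\omega$ does not affect either intersection condition, and the criterion can be stated with all of $\frac1\omega\N$.

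The only mildly delicate step is this treatment of the excluded translation mode. The point $x=0$ is harmless since $0\notin\frac1\omega\N$. Everything else is elementary algebra. I also expect to double-check that the closed and open interval statements exactly match the non-strict and strict sign conditions of the quartic, which holds because the roots $r_c^\pm$ are simple when the discriminant is nonzero.
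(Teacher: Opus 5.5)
Your proof is correct and is essentially the argument the paper intends. The paper omits the proof, describing it only as a rewriting of Proposition~\ref{prop:stability_interval} in terms of the roots of $p_c$: the sign of $p_c(n/\omega)$ is read off from the factorisation $p_c(x)=2\bigl(x^2-(r_c^-)^2\bigr)\bigl(x^2-(r_c^+)^2\bigr)$, and the excluded mode $n=\omega$ is harmless because $p_c(1)=2c>0$. You work directly from the definition of $\hat\lambda_{c,\omega}$ rather than going through $c_\omega^\pm$ and $g$, which is slightly cleaner, and your evaluation at $n=1$ in (i) is exactly the observation $c\le 0<g(1/\omega)\le c_\omega^-$.
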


Therefore, an $\omega$-circle is unstable if the set $\frac1\omega\N$ intersects the interval $(r_c^-,r_c^+)$.
In particular, it is clear that, given any $c\not\in[\frac19,1]$, we have $\hat{\lambda}_{c,\omega}<0$ for all large integers $\omega$ such that $\frac{1}{\omega}<r_c^+-r_c^-$, and hence $\omega$-circles are stable at most for finitely many $\omega$.
The exact stability depends on the subtle relation between the rational numbers $\frac1\omega\N$ and the real numbers $r_c^-,r_c^+$.

\section{Discussion}\label{sec:discussion}

 We close this paper by discussing several additional properties and open problems regarding \eqref{CCDF}.

\subsection{Immortal solutions and finite-time blowups}\label{subsec:immortal_blowup}

We first address the maximal existence time along the flow \eqref{CCDF}.
If $c=0$, it is already known that both immortal solutions and finite-time blowup solutions exist (see \cite{MR1962022}; cf.\ \cite{miura2024asymptotic}).
If $c=\frac{1}{2}$, the special gradient-flow structure ensures that the solution is always immortal \cite{DKS02}.

When $c<0$ (including $c=-1$), it is easy to observe that the flow \eqref{CCDF} always decreases the length and develops a singularity in finite time.

\begin{prop}\label{prop:c<0_finite}
    Let $c<0$.
    Then any solution $\gamma:\S\times[0,T)\to\R^2$ to \eqref{CCDF} satisfies $\frac{d}{dt}L(\gamma)<0$ and has a maximal existence time $T\leq L(\gamma_0)^4/(64\pi^4|c|)<\infty$.
\end{prop}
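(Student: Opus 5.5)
The plan is to compute the length evolution directly and then bound $\frac{d}{dt}L$ from above by a negative multiple of a negative power of $L$.

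\textbf{Step 1: length evolution.} Along \eqref{CCDF}, with normal velocity $V=-(k_{ss}+ck^3)$, the first variation of length gives, as in the proof of Lemma \ref{lem:length_preserving_normalisation},
\[
\frac{d}{dt}L=-\int kV\,ds=-\int k_s^2\,ds+c\int k^4\,ds .
\]
Since $c<0$, both terms are nonpositive. The term $c\int k^4\,ds$ is strictly negative unless $k\equiv0$, which is impossible for a closed curve. Hence $\frac{d}{dt}L<0$.

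\textbf{Step 2: lower bound for $\int k^4$.} By Cauchy--Schwarz and Hölder,
\[
\Big(\int |k|\,ds\Big)^4\le L^3\int k^4\,ds .
\]
For a closed curve, Fenchel's theorem gives $\int|k|\,ds\ge2\pi$. This holds for any closed immersed planar curve: the tangent indicatrix covers the circle, or, if the turning number is nonzero, $\int|k|\ge 2\pi|\omega|\ge2\pi$. Therefore
\[
\int k^4\,ds\ge \frac{16\pi^4}{L^3}.
\]

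\textbf{Step 3: ODE comparison.} Combining the two steps,
\[
\frac{d}{dt}L\le -\frac{16\pi^4|c|}{L^3},
\]
that is, $\frac{d}{dt}L^4\le -64\pi^4|c|$. Integrating gives
\[
0<L(\gamma(\cdot,t))^4\le L(\gamma_0)^4-64\pi^4|c|\,t,
\]
and this forces $T\le L(\gamma_0)^4/(64\pi^4|c|)$.

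The main point needing care is the Fenchel bound $\int|k|\,ds\ge2\pi$ for general immersed curves, including those with turning number zero. I would justify it as follows. The unit tangent map $\S\to\S$ has length $\int|k|\,ds$. Its image cannot lie in an open half-circle, because $\int T\,ds=0$ and so $T$ cannot have a strictly positive inner product with a fixed vector along the whole curve. A closed loop on the circle whose image is not contained in an open semicircle has length at least $2\pi$, which gives the bound.
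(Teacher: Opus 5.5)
Your proposal is correct and is essentially the paper's proof: the same identity $\frac{d}{dt}L=-\int k_s^2\,ds+c\int k^4\,ds$, the same Fenchel--H\"older bound $\int k^4\,ds\ge 16\pi^4/L^3$, and the same integration of $(L^4)'\le-64\pi^4|c|$. Your self-contained proof of $\int|k|\,ds\ge2\pi$ via $\int T\,ds=0$ and the half-circle argument is sound, but drop the parenthetical claim that the tangent indicatrix always covers the circle, which is false (the lemniscate of Bernoulli has tangent angle confined to $[-\frac{3\pi}{4},\frac{3\pi}{4}]$).
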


\begin{proof}
    We compute
    \[
    \frac{d}{dt}L(\gamma)=\int k(k_{ss}+ck^3)\,ds=-\int k_s^2+c\int k^4\,ds < 0.
    \]
    In addition, since Fenchel's theorem and H\"older's inequality yield
    \[
    2\pi \leq \int|k|\,ds \leq L^\frac{3}{4} \left( \int k^4\,ds \right)^\frac{1}{4},
    \]
    we have
    \[
    \frac{d}{dt}L(\gamma) \leq -\frac{16\pi^4|c|}{L(\gamma)^3}.
    \]
    Hence $(L^4)'\leq-64\pi^4|c|$ so that $0\leq L(\gamma(\cdot,t))\leq \sqrt[4]{L(\gamma_0)^4-64\pi^4|c|t}$ for all $t\in[0,T)$.
    This yields the desired estimate for $T$.
\end{proof}

In the remaining case, we already know that at least there exist immortal solutions, e.g., expanding circles.
However, the following problem remains open.

\begin{prob}
    Let $c>0$ with $c\neq\frac12$.
    Does the flow \eqref{CCDF} exist for all time $t\geq0$ from any smooth initial curve $\gamma_0:\S\to\R^2$?
\end{prob}

This may be related to the following more abstract question.

\begin{prob}
    Let $c>0$ with $c\neq\frac12$.
    Does the flow \eqref{CCDF} admit a general Lyapunov functional?
\end{prob}

It would be particularly interesting to investigate, for $c>\frac32$, how the solution evolves after an embedded circle is deformed by an unstable mode.

\subsection{Self-similar solutions}

The stationary solutions are already classified in Theorem \ref{thm:intro_stationary}, and the stability of stationary circles for $c=0$ is already clarified.
Thus the natural direction is to investigate the super-lemniscates.

\begin{prob}
    Let $c=c_j=\frac{2}{(4j-1)^2}$ with an integer $j\geq1$.
    Then, is the stationary super-lemniscate stable along the flow \eqref{CCDF}?
    What about its multiple coverings?
\end{prob}

We then turn to homothetically self-similar solutions.
This is a particularly important class of solutions in the study of geometric flows, as they often arise as models of finite-time singularities or limits of immortal solutions.

A classical result of Abresch-Langer provides a full classification of all homothetic solutions of closed planar curves to the curve shortening flow \cite{MR845704} (see also \cite{MR2931330}).
On the other hand, it is well known that the classification of self-similar solutions to a higher-order flow such as \eqref{CCDF} is far more difficult.

However, the class of equations \eqref{CCDF} admits special homothetic solutions, which we can classify rather easily.
We first recall that a solution is \emph{homothetic} if, up to a translation, it is of the form $\rho(t)\gamma$ for a fixed profile $\gamma:\S\to\R^2$.
In particular, for a homothetic solution to \eqref{CCDF}, the corresponding profile curve $\gamma$ satisfies
\begin{equation}\label{eq:homothetic}
    k_{ss}+ck^3 = \Lambda h, \qquad h:=\langle \gamma,N\rangle,
\end{equation}
for some $\Lambda\in\R$.
Now we call $\gamma:\S\to\R^2$ a \emph{common homothetic solution} if for any $c\in\R$ there exists $\Lambda_c\in\R$ such that $\gamma$ solves \eqref{eq:homothetic} with $\Lambda=\Lambda_c$.

Clearly, circles are common homothetic solutions.
More nontrivially, the lemniscate of Bernoulli is also a common homothetic solution, thanks to the special property that the ratio of $k_{ss}$, $k^3$, and $h$ is constant along the curve \cite{EGMWW15} (see also \cite{CWW23,MW25}).

Building on Theorem \ref{thm:intro_stationary}, we can classify all common homothetic solutions.

\begin{prop}\label{prop:intro_common_homothetic}
    A curve $\gamma:\S\to\R^2$ is a common homothetic solution to \eqref{CCDF} if and only if it is either a round circle or the lemniscate of Bernoulli, up to similar transformations, reparametrisations, and multiple coverings.
\end{prop}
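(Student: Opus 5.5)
The plan is to exploit linearity in $c$: a common homothetic profile satisfies $k_{ss}+ck^3=\Lambda_c h$ for every $c\in\R$, with $\Lambda_c$ depending on $c$. I will use two values of $c$ to eliminate $h$ and obtain an equation for the curvature alone.

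Taking $c=0$ and $c=1$, we get
\[
k_{ss}=\Lambda_0h \quad\text{and}\quad k_{ss}+k^3=\Lambda_1h.
\]
We split into two cases according to whether $\Lambda_0$ vanishes.

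\emph{Case $\Lambda_0=0$.} Then $k_{ss}=0$ on a closed curve, so $k$ is constant. Since the curve is closed, $k\neq0$, and $\gamma$ is a (possibly multiply covered) circle. Conversely, circles are clearly common homothetic solutions, since $h$ is constant on a circle centred at the origin.

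\emph{Case $\Lambda_0\neq0$.} Then $h=k_{ss}/\Lambda_0$. Substituting into the $c=1$ equation gives
\[
k_{ss}+k^3=\mu k_{ss},\qquad \mu:=\frac{\Lambda_1}{\Lambda_0}.
\]
If $\mu=1$, then $k\equiv0$, which is impossible for a closed curve. Otherwise
\[
k_{ss}+\tilde c\,k^3=0,\qquad \tilde c:=\frac{1}{1-\mu}.
\]
So the profile is a closed stationary solution of \eqref{CCDF} with parameter $\tilde c$. By Theorem \ref{thm:intro_stationary}, either $\tilde c=0$, which cannot happen, or $\tilde c=c_j$ and $\gamma$ is a super-lemniscate (up to similarity, reparametrisation and coverings).

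It remains to check which super-lemniscates are genuinely common homothetic solutions. For such a curve, $h=k_{ss}/\Lambda_0=-c_jk^3/\Lambda_0$, so the support function is proportional to $k^3$. The key step is to show this forces $j=1$. Differentiating $h=\IP{\gamma}{N}$ gives $h_s=-kq$, and $q_s=1+kh$ from the proof of Lemma \ref{lem:normalised_curvature_evolution}. Write $h=\beta k^3$, so that $3\beta k^2k_s=-kq$, i.e. $q=-3\beta kk_s$ where $k\neq0$, and hence everywhere by continuity. Substituting into $q_s=1+kh$ gives
\[
-3\beta(k_s^2+kk_{ss})=1+\beta k^4.
\]
Now insert $k_{ss}=-c_jk^3$ and the first integral $k_s^2=\frac{c_j}{2}(k_{\max}^4-k^4)$, which holds with a positive constant since $c_j>0$. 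Both sides are then polynomials in $k^4$, and $k$ takes a continuum of values, so the coefficients can be compared. Matching the constant terms and the $k^4$ coefficients gives two relations:
\[
-\tfrac{3}{2}\beta c_jk_{\max}^4=1,\qquad \tfrac32\beta c_j+3\beta c_j=\beta,
\]
i.e. $\tfrac92 c_j=1$. Thus $c_j=\frac29$ and $j=1$, which is the lemniscate of Bernoulli.

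Conversely, the lemniscate is a common homothetic solution by \cite{EGMWW15}: the ratios of $k_{ss}$, $k^3$ and $h$ are constant along it. One can also see this by retracing the computation above. With $\beta$ determined by the first relation, $h-\beta k^3$ and $q+3\beta kk_s$ satisfy the same first-order system, so $h=\beta k^3$ holds for a suitable choice of origin. Then $k_{ss}+ck^3=(c-\frac29)k^3$, which is proportional to $h$ for every $c$.

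The main obstacle is this last verification step: making rigorous the step from "$h$ is proportional to $k^3$" to a condition on $c_j$. The points needing care are the origin choice (translations change $h$ by $\IP{p}{N}$) and the points where $k=0$. The argument above handles both: it derives the identity only from the relation $h=\beta k^3$, which holds for the given origin, and it uses continuity across the zeros of $k$.
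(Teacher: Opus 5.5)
Your proof is correct and takes essentially the same route as the paper. Both arguments extract $h\propto k^3$ and $k_{ss}\propto k^3$ from the $c=0$ and $c=1$ equations, then differentiate using $h_s=-kq$, $q_s=1+kh$ and the first integral, and compare $k^4$-coefficients to force the constant $\frac29$; Theorem \ref{thm:intro_stationary} then identifies the lemniscate. The only differences are that you invoke the classification before computing the constant rather than after, and that you also prove the converse (via the linear system for $(h,q)$ and a choice of origin), which the paper omits and attributes to \cite{EGMWW15}.
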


\begin{proof}
We only discuss the classification (the ``only if'' part). 
Taking $c=0$ and $c=1$ in \eqref{eq:homothetic}, we obtain $k_{ss}=\Lambda_0 h$ and $k_{ss}+k^3=\Lambda_1 h$.
Subtracting gives
\[
k^3=(\Lambda_1-\Lambda_0)h.
\]
If $k$ is constant, then $\gamma$ is a round circle and we are done. 
Hence we may assume that $k$ is non-constant. In particular, $\Lambda_1\neq\Lambda_0$, so there
is $A\neq0$ such that
\begin{equation}\label{eq:homothetic_01}
    h=A k^3.
\end{equation}
Substituting this into the $c=0$ equation yields, for some $\mu\in\mathbb R$,
\begin{equation}\label{eq:homothetic_02}
    k_{ss}=\mu k^3.
\end{equation}

Now we compute $\mu$.
Differentiating \eqref{eq:homothetic_01}, we get $3A k^2 k_s=-kq$, where $q:=\langle \gamma,T\rangle$ with $T:=\gamma_s$ and we used $h_s=-kq$.
Since $k$ is analytic with respect to the parameter $s$, we may divide the above equation by $k$ to get $q=-3Akk_s$.
Differentiating once more and using $q_s=1+kh$ and \eqref{eq:homothetic_01}, we infer that
\[
-3A(k_s^2+kk_{ss})=1+A k^4.
\]
On the other hand, by \eqref{eq:homothetic_02}, we have $k_s^2-\frac{\mu}{2}k^4=C$ for some constant $C$. 
Therefore
\[
-3AC-\Bigl(\frac{9}{2}\mu+1\Bigr)A k^4=1.
\]
Because $k$ is non-constant, the coefficient of $k^4$ must vanish.
Since $A\neq0$,
\[
\mu=-\frac29.
\]

Therefore, by \eqref{eq:homothetic_02} with $\mu=-2/9$ and by Theorem \ref{thm:intro_stationary}, the only possibility for $\gamma$ is the lemniscate of Bernoulli, up to invariances.
\end{proof}

As the stability of circles is already clarified, it is natural to ask the following:

\begin{prob}
    Let $c\in\R$.
    Then, is the homothetic lemniscate of Bernoulli stable along the flow \eqref{CCDF}?
    What about its multiple coverings?
\end{prob}

Finally, we mention that, although the circle and the lemniscate are so far the only known examples of homothetic solutions to equations of the form \eqref{CCDF}, a nontrivial family of closed planar homothetic solutions for the cases $c=0$ and $c=\frac12$ has recently been discovered in \cite{andrews2026jellyfish}.
The construction and partial classification of such nontrivial self-similar solutions to \eqref{CCDF} therefore remain interesting open problems.
See also the existence \cite{MR2916362,MR5002094} and stability \cite{MR4597906} of nontrivial self-similar solutions to the curve diffusion flow for non-compact complete curves.

\bibliography{CCDF}

\end{document}